\tikzstyle{every picture}=[line width=.7pt,minimum size=3pt,every label/.append style={font=\normalsize},label distance=2pt]
\tikzstyle{every node}=[font=\normalsize,circle,draw=black,fill=black,inner sep=0pt,minimum width=1.3pt]
\newcolumntype{"}{@{\hskip\tabcolsep\vrule width 1pt\hskip\tabcolsep}}
\theoremstyle{plain}
\newtheorem{theorem}{Theorem}[section]
\newtheorem{lemma}[theorem]{Lemma}
\newtheorem{proposition}[theorem]{Proposition}
\newtheorem{myalgorithm}[theorem]{Algorithm}
\newtheorem{conjecture}[theorem]{Conjecture}
\newtheorem{question}[theorem]{Question}
\theoremstyle{definition}
\newtheorem{definition}[theorem]{Definition}
\newtheorem{example}[theorem]{Example}
\newtheorem{notation}[theorem]{Notation}
\theoremstyle{remark}
\newtheorem{remark}[theorem]{Remark}
\newcommand{\CC}{\mathcal{C}}
\newcommand{\hgt}{{\rm ht}}
\newcommand{\mc}{\mathcal}
\newcommand{\overbar}[1]{\mkern 1.5mu\overline{\mkern-3mu#1\mkern-1.5mu}\mkern 1.5mu}
\newcommand{\B}{\overbar{B}}
\newcommand{\C}{\overbar{C}}
\title{Cohen-Macaulay binomial edge ideals of small graphs}
\author[D. Bolognini, A. Macchia, G. Rinaldo, F. Strazzanti]{Davide Bolognini, Antonio Macchia, Giancarlo Rinaldo, Francesco Strazzanti}\thanks{Antonio Macchia was supported by the Deutsche Forschungsgemeinschaft (DFG, German Research Foundation) – project number 454595616.}
\address{{\small Davide Bolognini, Dipartimento di Ingegneria Industriale e Scienze Matematiche, Universit\`a Politecnica delle Marche, Via Brecce Bianche, 60131 Ancona, Italy}}
\email{{\small davide.bolognini.cast@gmail.com}}
\address{{\small Antonio Macchia, Fachbereich Mathematik und Informatik, Freie Universit\"at Berlin, Arnimallee 2, 14195 Berlin, Germany}}
\email{{\small macchia.antonello@gmail.com}}
\address{{\small Giancarlo Rinaldo, Dipartimento di Matematica e Informatica, Fisica e Scienze della Terra, Università di Messina, Viale Ferdinando Stagno d’Alcontres 31, 98166 Messina, Italy}}
\email{{\small giancarlo.rinaldo@unime.it}}
\address{{\small Francesco Strazzanti, Dipartimento di Matematica ``Giuseppe Peano'', Universit\`a degli Studi di Torino, Via Carlo Alberto 10, 10123 Torino, Italy}}
\email{{\small francesco.strazzanti@gmail.com}}
\subjclass[2020]{13H10, 13C05, 05C25.}
\keywords{Binomial edge ideals, Cohen-Macaulay rings, accessible set systems, blocks with whiskers.}
\begin{document}

\begin{abstract}
A combinatorial property that characterizes Cohen-Macaulay binomial edge ideals has long been elusive. A recent conjecture ties the Cohen-Macaulayness of a binomial edge ideal $J_G$ to special disconnecting sets of vertices of its underlying graph $G$, called \textit{cut sets}. More precisely, the conjecture states that $J_G$ is Cohen-Macaulay if and only if $J_G$ is unmixed and the collection of the cut sets of $G$ is an accessible set system. In this paper we prove the conjecture theoretically for all graphs with up to $12$ vertices and develop an algorithm that allows to computationally check the conjecture for all graphs with up to $15$ vertices and all blocks with whiskers where the block has at most $11$ vertices. This significantly extends previous computational results.
\end{abstract}

\maketitle

\section{Introduction}

Finding a combinatorial characterization of an algebraic property of a class of ideals in a polynomial ring is an interesting and challenging task. A prominent example is Fr\"oberg's theorem according to which the (monomial) edge ideal of a graph $G$ has a linear resolution if and only if the complement of $G$ is chordal \cite{F90}.

In this paper we study a graph-theoretical characterization of Cohen-Macaulay binomial edge ideals, a class of binomial ideals naturally defined starting from simple graphs. Fix a field $K$. Given a finite simple graph $G$ with vertex set $V(G)$ and edge set $E(G)$, the \textit{binomial edge ideal} of $G$ is
\[
J_G = (x_i y_j - x_j y_i \mid \{i,j\} \in E(G)) \subset R = K[x_1, \dots, x_m, y_1, \dots, y_m],
\]
where $m = |V(G)|$. Binomial edge ideals have been introduced independently in \cite{HHHKR10} and \cite{O11} and generalize the well-studied ideals of $2$-minors of a generic $2 \times m$ matrix.

Crucial in the theory of binomial edge ideals is the notion of cut sets, special subsets of vertices that disconnect the graph. More precisely, given a subset $S \subset V(G)$, we denote by $G \setminus S$ the subgraph induced by $G$ on the vertices $V(G) \setminus S$ and by $c_G(S)$ the number of connected components of $G \setminus S$. A subset $S \subset V(G)$ is called a \textit{cut-point set}, or simply \textit{cut set}, of $G$ if either $S = \emptyset$ or $c_G(S) > c_G(S \setminus \{i\})$ for every $i \in S$. We call \textit{cut vertex} or \textit{cut-point} a vertex $v$ of $G$ such that $\{v\}$ is a cut set of $G$. We denote by $\mc C(G)$ the collection of cut sets of $G$. Cut sets are important because there is a one-to-one correspondence between the cut sets of $G$ and the minimal primes of $J_G$, see \cite[Corollary 3.9]{HHHKR10}.

The study of Cohen-Macaulay binomial edge ideals mainly focused on the search of classes of graphs \cite{BMS18, EHH11, R13, R19} and of constructions preserving this property \cite{KS15, RR14}. On the other hand, in \cite{AM20} it is described a topological characterization in terms of reduced homology groups of a certain poset, but there are no general combinatorial characterizations so far. In \cite{BMS22} the first, second and fourth authors propose the following conjecture in terms of the structure of the cut sets of the associated graph.

\begin{conjecture}[{\cite[Conjecture 1.1]{BMS22}}] \label{conjecture}
Let $G$ be a graph. Then $R/J_G$ is Cohen-Macaulay if and only if $G$ is accessible.
\end{conjecture}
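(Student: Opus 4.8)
The plan is to prove the two implications separately, concentrating effort on the genuinely open direction. Recall that ``$G$ is accessible'' means, as in the abstract, that $J_G$ is unmixed and that the collection $\mc C(G)$ is an \emph{accessible set system}, i.e.\ every nonempty $S \in \mc C(G)$ contains an element $s$ with $S \setminus \{s\} \in \mc C(G)$. For the forward direction, that $R/J_G$ Cohen-Macaulay forces accessibility, I would rely on the fact that Cohen-Macaulay implies unmixed together with the description of the minimal primes $P_S(G)$ indexed by cut sets via \cite[Corollary 3.9]{HHHKR10}: a Cohen-Macaulay ideal cannot admit a minimal prime $P_S(G)$ whose index set $S$ fails the shedding condition of an accessible system without violating equidimensionality or connectedness in codimension one. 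This implication is essentially established in \cite{BMS22}, so I would only need to assemble it; the substance of the conjecture is the converse, that accessibility implies Cohen-Macaulayness.

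First I would reduce to connected graphs. For a disjoint union, $R/J_G$ is a tensor product over $\K$ of the coordinate rings of the components, and $\mc C(G)$ decomposes correspondingly; since both Cohen-Macaulayness and accessibility respect this decomposition, it suffices to treat connected $G$. Next I would reduce along cut vertices to a single block with whiskers. If $v$ is a cut vertex, the cut sets containing $v$ and those avoiding it organize $\mc C(G)$ according to the blocks meeting $v$, and there are gluing results for binomial edge ideals showing that the Cohen-Macaulay property at a cut vertex is governed by the attached pieces \cite{RR14, KS15}. The target is a decomposition theorem asserting that $G$ is accessible, respectively $R/J_G$ is Cohen-Macaulay, if and only if each maximal block-with-whiskers component is, so that the problem localizes to a $2$-connected core together with its pendant structure.

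The heart of the argument is then a single $2$-connected block $B$ equipped with whiskers, where unmixedness (which accessibility includes) should force every vertex of $B$ to carry a whisker, or $B$ to be an edge or a complete graph, pinning down the combinatorial type. For such graphs I would prove accessibility $\Rightarrow$ Cohen-Macaulayness by passing to the squarefree initial ideal $\ini_<(J_G)$ with respect to the lexicographic order of \cite{HHHKR10}, whose generators correspond to admissible paths, and showing that its Stanley--Reisner complex is vertex-decomposable, hence shellable and Cohen-Macaulay. Since $\depth R/\ini_<(J_G) \le \depth R/J_G$ while the two rings share the same dimension, Cohen-Macaulayness of the initial complex transfers to $R/J_G$. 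Accessibility would supply the shedding vertex at each step via the element $s$ guaranteed by the set-system condition, driving an induction on $|V(G)|$ in which deleting a whisker leaf or a shedding vertex preserves accessibility.

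The main obstacle is precisely this last step, and it is the reason the conjecture remains open in full generality. There is no general mechanism converting a combinatorial condition on cut sets into the homological statement of Cohen-Macaulayness, and the reduction to the initial complex is only a \emph{sufficient} test: for a non-closed, non-bipartite $2$-connected block the lexicographic initial complex may well fail to be Cohen-Macaulay even when $R/J_G$ is, so this route need not close the converse uniformly. One would then need either a block-adapted term order producing a shellable degeneration, or a direct homological/deformation argument — a promising candidate being a geometric vertex decomposition of $R/J_G$ itself (in the sense of Klein and Rajchgot) whose recursion is synchronized with the shedding of elements in the accessible set system. Ensuring that this recursion terminates compatibly with accessibility, and in particular handling the $2$-connected blocks that are neither bipartite \cite{BMS18} nor closed, where every previously known technique breaks down, is the crux of the problem.
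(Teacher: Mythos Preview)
The statement is a \emph{conjecture}: the paper does not prove it, and your proposal ultimately does not either, as you yourself concede in the final paragraph. What the paper does is establish the chain \eqref{Conditions} and then attack the missing implication ``accessible $\Rightarrow$ strongly unmixed'' by reducing, via Theorem~\ref{T.minus_v_unmixed}, to Question~\ref{Q.Main_question}: in every accessible block with whiskers, does some cut vertex $v$ have $J_{G\setminus\{v\}}$ unmixed? The paper then answers this question affirmatively for several structural classes (Theorem~\ref{T.filters}) and uses those results both to prove the conjecture for $k\ge n-2$ and for all graphs on at most $12$ vertices, and to drive Algorithm~\ref{algorithm}. None of this touches initial ideals or shellability; the mechanism is the recursive definition of strong unmixedness together with the cut-set combinatorics of Propositions~\ref{P.good_cut_vertex} and Lemma~\ref{L.CutsetsInUnmixedGraphs}.

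Your sketch contains a concrete error that would derail the block-with-whiskers step even before the initial-ideal issue arises. You write that unmixedness ``should force every vertex of $B$ to carry a whisker, or $B$ to be an edge or a complete graph.'' This is false: the paper's entire Section~3 concerns accessible blocks $B$ on $n$ vertices carrying only $k$ whiskers with $k$ strictly less than $n$ (see e.g.\ Propositions~\ref{P.three_whiskers}, \ref{P.BoundNeighbourhood}, \ref{P.blocksWithTwoNonCutvertex}, and the graphs in Figures~\ref{F.graph_all_filters} and~\ref{F.unmixed_four_whiskers}), and $B$ is typically neither complete nor an edge. Only in the extreme case $k=n$ is $B$ forced to be complete (Remark~\ref{R.k=n}). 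So there is no ``pinning down'' of the combinatorial type, and the subsequent attempt to prove vertex-decomposability of the initial complex has no controlled base case to induct from. Combined with the obstruction you already note---that for non-closed, non-bipartite blocks the lex initial complex can fail to be Cohen--Macaulay even when $R/J_G$ is---the proposed route does not close the converse implication.
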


Following \cite{BMS22}, recall that a graph $G$ is called \textit{accessible} if $J_G$ is unmixed and $\mc C(G)$ is an \textit{accessible set system} (see \cite[p. 360]{KL83}), i.e., for every non-empty $S \in \mc C(G)$ there exists $s \in S$ such that $S \setminus \{s\} \in \mc C(G)$. This notion first emerged in \cite{BMS18}, where Conjecture \ref{conjecture} was proved for bipartite graphs.
Accessibility is a completely combinatorial notion since the unmixedness of $J_G$ can also be expressed in terms of the cut sets of $G$, see \cite[Lemma 2.5]{RR14}: $J_G$ is unmixed if and only if $c_G(S) = |S| + c$ for every $S \in \CC(G)$, where $c$ is the number of connected components of $G$.

Conjecture \ref{conjecture} holds for chordal, bipartite and traceable graphs, see \cite[Theorems 6.4 and 6.8, Corollary 6.9]{BMS22}, as well as for other classes of graphs described in \cite{LMRR21} and \cite{SS22}.
The proof of these results relies on a further condition, the \textit{strong unmixedness} of $J_G$, introduced in \cite{BMS22}, that only depends on $G$ (and not on the field $K$). For every graph $G$, the following implications hold as a consequence of \cite[Theorem 5.11]{BMS22} and \cite[Theorem 2]{LMRR21}:
\begin{equation} \label{Conditions}
J_G \text{ strongly unmixed} \Rightarrow R/{J_G} \text{ Cohen-Macaulay} \Rightarrow R/{J_G} \text{ satisfies Serre's condition } (S_2) \Rightarrow G \text{ accessible}.
\end{equation}

In this article we provide both theoretical and computational evidence for Conjecture \ref{conjecture}, proving that it holds for new classes of graphs. To this aim, we develop a series of theoretical tools that allows us to limit the search space to a special family of graphs.

Recall that a \textit{block}, or biconnected graph, is a graph that does not have cut vertices and \textit{adding a whisker} to a vertex $v$ of a graph means attaching a pendant edge $\{v,f\}$, where $f$ is a new vertex. In particular, $f$ is a \textit{free vertex}, which means that it belongs to a unique maximal clique. The first important reduction to prove that $G$ accessible implies $J_G$ strongly unmixed is to consider only \textit{block with whiskers}, i.e., graphs constructed by adding a whisker to some vertices of a block. This follows from \cite[Proposition 3]{LMRR21} and \cite[Theorem 1.2]{SS22}, see also \cite[Corollary 3.13]{SS22b}. By focusing on blocks with whiskers we prove:

\begin{theorem} \label{T.new_cases_conj}
Let $G$ be one of the following:
\begin{itemize}
\item[{\rm (a)}] a block with $n$ vertices and $k \geq n-2$ whiskers;
\item[{\rm (b)}] a block with whiskers, where the vertices of the block are at most $11$;
\item[{\rm (c)}] a graph with up to $15$ vertices.
\end{itemize}
Then the conditions in \eqref{Conditions} are all equivalent. In particular, Conjecture \ref{conjecture} holds for all the graphs above and in these cases the Cohen-Macaulayness of $R/J_G$ does not depend on the field.
\end{theorem}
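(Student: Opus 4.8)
The plan is to reduce the whole statement to the single implication ``$G$ accessible $\Rightarrow$ $J_G$ strongly unmixed'': the other links of \eqref{Conditions} hold for every graph, so once this implication is available all four conditions in \eqref{Conditions} coincide for the graphs in question, and since strong unmixedness does not refer to $K$, field independence of Cohen-Macaulayness is automatic. By \cite[Proposition 3]{LMRR21} and \cite[Theorem 1.2]{SS22} (cf.\ \cite[Corollary 3.13]{SS22b}) this implication localizes at the blocks of $G$, so it suffices to treat the case in which $G$ is a block with whiskers, with underlying block $B$ on $n$ vertices and whiskers attached along a subset $W \subseteq V(B)$, $|W| = k$. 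A direct count gives $c_G(S) = c_B(S) + |S \cap W|$ for every $S \subseteq V(B)$ (and cut sets never contain whisker vertices), so by \cite[Lemma 2.5]{RR14} unmixedness of $J_G$ is equivalent to $c_B(S) = |S \setminus W| + 1$ for every cut set $S$; moreover accessibility forces every non-empty cut set of $G$ to meet $W$, and in particular a block with no whiskers is accessible only if it is complete. This combinatorial dictionary is the engine for everything that follows.

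For part (a) I would analyze the regime $k \ge n-2$ directly: at most two vertices of $B$ carry no whisker, and the identity $c_B(S) = |S \setminus W| + 1$ together with the ``meets $W$'' constraint leaves only a short, explicit list of possibilities for the pair $(B, W)$. I expect accessibility to pin $G$ down to this finite list. On each surviving family strong unmixedness would then be proved by induction on $k$: deleting a whiskered vertex $v$ of $B$ (a cut vertex of $G$) turns $G$ into an isolated vertex together with $B \setminus v$ carrying the remaining whiskers, whose blocks are strictly smaller, so the recursive definition of strong unmixedness in \cite{BMS22} together with the inductive hypothesis applies, the base cases being either small enough for direct verification or chordal, hence covered by \cite[Theorem 6.4]{BMS22}.

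Parts (b) and (c) are computational once the search space has been pruned. For (b): enumerate, up to isomorphism, all biconnected graphs $B$ on at most $11$ vertices; for each $B$ and each $W \subseteq V(B)$ form the block with whiskers $G$, test accessibility purely combinatorially (via the component count above and \cite[Lemma 2.5]{RR14}), and for each accessible $G$ certify strong unmixedness by unrolling the recursion of \cite{BMS22}. For (c): given $G$ on at most $15$ vertices, pass to the blocks with whiskers associated to it; each such graph again has at most $15$ vertices, so if its underlying block has at most $11$ vertices it falls under (b), while if the block has $n \ge 12$ vertices it carries at most three whiskers. In this last regime I would prove a structural reduction lemma: such an accessible graph is either chordal (hence handled by \cite[Theorem 6.4]{BMS22}) or can be replaced, without altering accessibility or the strong unmixedness status, by a strictly smaller block with whiskers, so that after finitely many steps one returns to the range of (b). (The overlap regime $k \ge n-2$, $n+k \le 15$ forces $n \le 8$ and is already covered by (b); part (a) is what extends the result to arbitrarily large graphs.)

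The main obstacle is the sheer size of the search: the number of graphs on $15$ vertices is far beyond brute force, so the argument stands or falls on the reductions — first down to blocks with whiskers, then down to the at most $11$-vertex blocks of (b) together with the explicitly described families of (a) and the chordal case. Making these reductions simultaneously correct and exhaustive, so that no accessible graph escapes verification, is the crux. A secondary difficulty is algorithmic: the recursion defining strong unmixedness branches rapidly, so the per-graph certificate needs aggressive pruning and memoization of isomorphic subproblems, and one must check that the reduction process in part (c) actually terminates.
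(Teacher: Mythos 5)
Your architecture is broadly the right one (reduce to blocks with whiskers, treat $k\geq n-2$ theoretically, compute for blocks on at most $11$ vertices), but it omits the single reduction that makes the whole scheme feasible: it suffices to exhibit, in each accessible block with whiskers, \emph{one} cut vertex $v$ such that $J_{G\setminus\{v\}}$ is unmixed (Question \ref{Q.Main_question}); the paper's Theorem \ref{T.minus_v_unmixed} (a minimal-counterexample argument combined with \cite[Corollary 5.16]{BMS22} and Proposition \ref{P.stronglyUnmixed}) then upgrades this one-step condition to strong unmixedness of \emph{every} accessible graph, with Remark \ref{R.ReductionFewVertices} preserving the vertex bound needed for part (c). Your plan instead certifies strong unmixedness directly by ``unrolling the recursion,'' but that recursion passes through $G_v$ and $G_v\setminus\{v\}$, which are not blocks with whiskers; so your localization to blocks does not close up, and you would still need an argument like the induction in Theorem \ref{T.computational_results}(b), which shows that $\B_{v_1}\setminus\{v_1\}$ is again a block with whiskers whose block has the same number of vertices --- and that argument itself rests on the one-good-cut-vertex criterion you have not isolated. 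Without this reduction the per-graph certificate is also computationally out of reach, which is precisely why the paper only ever tests unmixedness of $J_{\B\setminus\{v\}}$.

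Two further concrete gaps. First, for part (c) you must handle blocks with $n\geq 12$ vertices carrying $k\leq 3$ whiskers; your proposed ``structural reduction lemma'' (either chordal or replaceable by a strictly smaller block with whiskers) is left unproved and there is no evidence for the dichotomy --- the paper instead proves directly, for arbitrary $n$, that an accessible block with at most $3$ whiskers has a good cut vertex (Proposition \ref{P.three_whiskers}, building on \cite[Proposition 6.6]{BMS22}). Second, for part (a) the claim that accessibility pins $(B,W)$ down to ``a short, explicit list'' is not tenable: for $k=n-1$ and $k=n-2$ these are infinite families with genuinely varying block structure (Example \ref{E.k=n-2} already shows that unmixedness and accessibility diverge at $k=n-2$), and the paper replaces your case list with a cut-set counting argument (Remark \ref{R.k=n}, Proposition \ref{P.blocksWithTwoNonCutvertex}) followed by an induction on $k$ in Theorem \ref{T.Conjecture_n=k+2} whose delicate step --- showing that $\B_{v_1}\setminus\{v_1,f_1\}$ is accessible so the induction applies to the $G_v$-branch --- is exactly the part your sketch skips.
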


More precisely,
\begin{itemize}
\item we prove case (a) in Theorem \ref{T.Conjecture_n=k+2} and case (c) for graphs up to $12$ vertices only using theoretical results in Theorem \ref{T.12_vertices}. The latter recovers the computational results of \cite{LMRR21}, in which an exhaustive search on all connected graphs took about a month on a high-performance computing system;
\item we develop a procedure, Algorithm \ref{algorithm}, to computationally treat cases (b) and (c), see Theorem \ref{T.computational_results}. A complete implementation of Algorithm \ref{algorithm} using \textit{C}, \textit{C++}, \textit{Python}, and the packages \textit{Nauty} \cite{Nauty} and \textit{igraph} \cite{igraph} can be found in the website \cite{BMRS23}, where we also provide some examples of computation.
\end{itemize}

The key part in the proof of Theorem \ref{T.new_cases_conj} consists in answering Questions \ref{Q.Main_question}, which asks whether for a given graph $G$ there exists a cut vertex for which $J_{G \setminus \{v\}}$ is unmixed. In Theorem \ref{T.minus_v_unmixed} we prove that a positive answer to this question for blocks with whiskers implies the equivalence of the conditions in \eqref{Conditions} for all graphs. In particular, we find this cut vertex for the following classes of blocks with whiskers:

\begin{theorem}\label{T.filters}
Let $B$ be a block with $n$ vertices and $\B$ be the graph obtained by adding $k>0$ whiskers to $B$.
Assume that $\B$ is accessible and satisfies one of the following properties:
\begin{enumerate}
  \item $B$ contains a free vertex (Proposition \ref{P.blockWithFreeVertex});
  \item $B$ has a vertex of degree at most two (Proposition \ref{P.blocksWithDegree2verticesAreGood});
  \item $\B$ has $k \leq 3$ whiskers (Proposition \ref{P.three_whiskers});
  \item there is a cut vertex $v$ of $\B$ such that $|N_B(v)| \geq \lfloor \frac{n+r}{2} \rfloor - 1$, where $r$ is the number of cut vertices adjacent to $v$ plus one (Proposition \ref{P.BoundNeighbourhood});
  \item $\B$ has $k=4$ whiskers and the induced subgraph on the cut vertices of $\B$ is a block (Proposition \ref{P.FourWhiskers});
  \item $\B$ has $k \geq n-2$ whiskers (Proposition \ref{P.blocksWithTwoNonCutvertex}).
\end{enumerate}
Then there exists a cut vertex of $\B$ for which $J_{\B \setminus \{v\}}$ is unmixed.
\end{theorem}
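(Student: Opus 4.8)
The plan is to prove the six propositions collected in the statement, since together they give Theorem~\ref{T.filters}; in each case I would produce an explicit cut vertex $v$ of $\B$ and verify that $J_{\B\setminus\{v\}}$ is unmixed using the numerical criterion for unmixedness recalled after Conjecture~\ref{conjecture}. The first step is the combinatorics shared by all six cases. Since $B$ is a block, the cut vertices of $\B$ are exactly the whiskered vertices of $B$, and deleting such a $v$ disconnects $\B$ into the isolated free vertex $f_v$ and the connected graph $G_v$ obtained from $B\setminus\{v\}$ by reattaching the remaining $k-1$ whiskers. Adjoining an isolated vertex does not change unmixedness, so $J_{\B\setminus\{v\}}$ is unmixed if and only if $J_{G_v}$ is.

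The core reduction is as follows. For a nonempty cut set $T$ of $G_v$ one has $c_{\B}(T\cup\{v\})=c_{G_v}(T)+1$; via this identity the cut conditions for $T\cup\{v\}$ at the elements of $T$ become exactly the statement that $T$ is a cut set of $G_v$, while the cut condition at $v$ says that removing $v$ from $\B\setminus T$ increases the number of components, which happens precisely when $v$ still has a $B$-neighbour outside $T$, i.e.\ when $N_B(v)\not\subseteq T$ (otherwise $\{v,f_v\}$ is already a connected component of $\B\setminus T$). Hence $T\cup\{v\}\in\CC(\B)$ when $N_B(v)\not\subseteq T$, whereas when $N_B(v)\subseteq T$ one checks that $T$ itself lies in $\CC(\B)$. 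Since $\B$ is accessible, $J_{\B}$ is unmixed, so in the first case $c_{G_v}(T)=|T|+1$, which is exactly the equality required for unmixedness of $J_{G_v}$, and in the second case $c_{G_v}(T)=c_{\B}(T)-1=|T|$, which violates it. Therefore $J_{\B\setminus\{v\}}$ is unmixed if and only if no nonempty cut set of $G_v$ contains $N_B(v)$; and since any such set is also a cut set of $\B$, it is enough to choose $v$ so that $N_B(v)$ is contained in no cut set of $\B$, i.e.\ so that every cut set of $\B$ omits some neighbour of $v$ in $B$. This sufficient condition is the workhorse for cases (1)--(5); for case (6) the cut sets of $\B$ may be too abundant for it to hold, and one then argues with the cut sets of $G_v$ directly.

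It remains, under each hypothesis, to locate such a $v$. In case (1) a free (simplicial) vertex $w$ of $B$ lies in no cut set of $\B$ unless it is itself whiskered, because any path through $w$ reroutes inside the clique $N_B(w)$; then $v$ can be taken to be a whiskered neighbour of such a $w$, the remaining configurations being dealt with by hand. Case (2) proceeds the same way with a degree-two vertex of $B$ in the role of $w$. In case (6) the bound $k\geq n-2$ leaves at most two non-whiskered vertices in $B$, constraining the cut sets enough to read off a good $v$ (this also yields Theorem~\ref{T.Conjecture_n=k+2}). Case (4) is a counting argument: a cut set $S\supseteq N_B(v)$ must satisfy $v\notin S$ (otherwise $v$ dangles on $f_v$), and unmixedness forces $\B\setminus S$ into $|S|+1$ components; counting vertices of $B\setminus(S\cup\{v\})$ against these components and using $|N_B(v)|\geq\lfloor(n+r)/2\rfloor-1$ leaves essentially only the possibility $S=N_B(v)$ with $B\setminus(N_B(v)\cup\{v\})$ edgeless, which is excluded separately. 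Cases (3) and (5) reduce to finitely many shapes of the induced graph on the cut vertices of $\B$ (at most three whiskers, or four whiskers forming a block, hence $C_4$, $K_4-e$ or $K_4$), settled by inspecting them.

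The hard part, and the reason for treating the cases separately, is controlling the cut sets \emph{created} by deleting $v$: a set $T\supseteq N_B(v)$ can be a cut set of $G_v$ even when $v$ lay on no small cut set of $\B$, because removing $v$ turns formerly $2$-connected parts of $B$ into graphs with new cut vertices. Arranging for $N_B(v)$ to be spread out enough to escape every cut set is precisely where the structural hypothesis of each case is used, and cases (4) (a tight neighbourhood bound) and (5) (a finite but intricate case check) are the most demanding. Finally, although the statement asserts only unmixedness of $J_{\B\setminus\{v\}}$, for the application in Theorem~\ref{T.minus_v_unmixed} one also records that the chosen $v$ keeps $G_v$ accessible, which follows by transporting the accessibility of $\CC(\B)$ along the correspondence $S\mapsto S\setminus\{v\}$.
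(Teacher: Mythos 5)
Your opening reduction is correct and is exactly the paper's starting point: the theorem is by design just the conjunction of the six propositions, and your identity $c_{\B}(T\cup\{v\})=c_{G_v}(T)+1$ together with the analysis of when $T\cup\{v\}$ or $T$ is a cut set of $\B$ is a re-derivation of Proposition \ref{P.good_cut_vertex} in this special case. However, the ``workhorse'' you extract from it --- choose $v$ so that $N_B(v)$ is contained in \emph{no cut set of} $\B$ --- is strictly stronger than the criterion the paper uses and proves, namely that no cut set of $\B\setminus\{v\}$ contains $N_B(v)$. The two are not equivalent: if $S\in\CC(\B)$ contains $N_B(v)$, a vertex $s\in N_B(v)$ whose only role in $\B\setminus S$ is to reconnect the component $\{v,f_v\}$ to one other component becomes inessential once $v$ is deleted, so $S$ need not survive as a cut set of $\B\setminus\{v\}$. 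Committing to the stronger condition in cases (2)--(5) is a real problem, because the paper's arguments there run in the opposite direction: they start from a hypothetical $S\in\CC(\B\setminus\{v\})$ containing $N_B(v)$, pass to $\B$, and crucially invoke accessibility through Lemma \ref{L.strictly} to get $N_B(v)\subsetneq S$. Your plan never uses accessibility beyond unmixedness in these cases, and without it the counting in (3), (4) and (5) does not close.

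The individual cases are also substantially underestimated. In case (2), a degree-two vertex of $B$ that is itself whiskered is not simplicial in $\B$ and genuinely lies in cut sets, so it cannot ``proceed the same way'' as the free-vertex case; the paper needs Lemma \ref{L.vertex_deg_2}, which builds the auxiliary cut set $N'=(N_B(v_1)\setminus\{v\})\cup\{w\}$, shows $N'\cup\{v\}$ and $N'\cup\{v_1\}$ are cut sets, and uses accessibility of $\CC(\B)$ to remove an element of $N'$ and reach a contradiction --- none of which is replaced by anything in your sketch. In case (5), inspecting the three blocks on four vertices ($C_4$, $K_4$ minus an edge, $K_4$) cannot suffice, since the difficulty lives in how $H$ sits inside the $n$-vertex block $B$; the paper's proof is an edge count (Lemma \ref{L.BoundEdgesH}) that plays the degree bound of Proposition \ref{P.BoundNeighbourhood} against Lemma \ref{L.cvdeg1} (every non-cut vertex must see at least two cut vertices when $H$ is a block). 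Case (3) for non-complete $H$ likewise needs Lemmas \ref{L.strictly} and \ref{L.cvdeg1} and a two-step counting argument, not inspection. So while your architecture and your case (1), and to a large extent cases (4) and (6), track the paper, the proposal as written has genuine gaps in (2), (3) and (5), and its global sufficient condition would need to be weakened to the one in Proposition \ref{P.good_cut_vertex} before any of those cases could be completed.
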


We remark here that, filtering the blocks with whiskers by using Theorem \ref{T.filters}, now Algorithm \ref{algorithm} allows to test the equivalence of the conditions in \eqref{Conditions} for all graphs up to $12$ vertices in a few seconds.

In Proposition \ref{P.blocksWithOneNonCutvertex} we also prove that, for blocks with $k \geq n - 1$ whiskers all the conditions in \eqref{Conditions} are equivalent to the unmixedness of $J_G$, whereas this is not the case for $k = n - 2$, see Example \ref{E.k=n-2}.

Part of the results and computations contained in this paper were announced in the extended abstract \cite{BMRS22}.

\section{Strongly unmixed binomial edge ideals}

Let $G$ be a finite simple graph, i.e., a graph without loops and multiple edges, with vertex set $V(G)$ and edge set $E(G)$.
Given $v \in V(G)$, we denote by $G_v$ the graph with vertex set $V(G_v)=V(G)$ and edge set $E(G_v)=E(G) \cup \{\{w_1,w_2\} \mid \{v,w_1\},\{v,w_2\} \in E(G)\}$. We recall the following definition from \cite[Definition 5.6]{BMS22}:

\begin{definition}
A binomial edge ideal $J_G$ is called \textit{strongly unmixed} if the connected components of $G$ are complete graphs or if $J_G$ is unmixed and there exists a cut vertex $v$ of $G$ such that $J_{G \setminus \{v\}}$, $J_{G_v}$, and $J_{G_v \setminus \{v\}}$ are strongly unmixed.
\end{definition}

We notice here that in the above definition it is enough to require that only one between $J_{G_v}$ and $J_{G_v \setminus \{v\}}$ is strongly unmixed.

\begin{proposition} \label{P.stronglyUnmixed}
Let $J_G$ be unmixed. The following conditions are equivalent:
\begin{enumerate}
\item $J_G$ is strongly unmixed;
\item There exists a cut vertex $v$ of $G$ such that $J_{G \setminus \{v\}}$ and $J_{G_v}$ are strongly unmixed;
\item There exists a cut vertex $v$ of $G$ such that $J_{G \setminus \{v\}}$ and $J_{G_v \setminus \{v\}}$ are strongly unmixed.
\end{enumerate}
\end{proposition}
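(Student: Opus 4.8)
The plan is to get $(1)\Rightarrow(2)$ and $(1)\Rightarrow(3)$ straight from the definition, and to deduce the two converses from one auxiliary equivalence together with the behaviour of strong unmixedness under deletion of a free vertex. Recall that the definition of strong unmixedness has a base clause (the connected components of $G$ are complete) and a recursive clause; conditions (2) and (3) replace only the recursive clause. Hence, if $J_G$ is strongly unmixed, then either it is so via the base clause, and there is nothing to prove, or it is so via the recursive clause, and the cut vertex it provides simultaneously witnesses (2) and (3); this gives $(1)\Rightarrow(2)$ and $(1)\Rightarrow(3)$. For the converses it is enough to prove two things: (A) if $J_G$ is unmixed and $v$ is a cut vertex of $G$, then $J_{G_v}$ is unmixed; and (B) if $J_{G_v}$ is unmixed, then $J_{G_v}$ is strongly unmixed if and only if $J_{G_v\setminus\{v\}}$ is strongly unmixed. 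Granting (A) and (B): in case (2) we are given that $J_{G\setminus\{v\}}$ and $J_{G_v}$ are strongly unmixed, so by (B) also $J_{G_v\setminus\{v\}}$ is, hence all three ideals of the recursive clause are strongly unmixed and $J_G$ is strongly unmixed; case (3) is identical, using (B) in the other direction to pass from $J_{G_v\setminus\{v\}}$ to $J_{G_v}$.

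Statement (B) is the crucial one, and its proof rests on the observation that $v$ is a \emph{free vertex} of $G_v$: indeed $N_{G_v}(v)=N_G(v)$, and this set is a clique in $G_v$ by construction, so $\{v\}\cup N_G(v)$ is the unique maximal clique of $G_v$ containing $v$. Thus (B) is an instance of the general principle that, for a free vertex $f$ of a graph $H$ with $J_H$ unmixed,
\[
J_H \text{ is strongly unmixed} \quad\Longleftrightarrow\quad J_{H\setminus\{f\}} \text{ is strongly unmixed}.
\]
This is the strong-unmixedness analogue of the well-known good behaviour of free (simplicial) vertices under Cohen-Macaulayness; I would either cite it from \cite{BMS22} or prove it by induction on $|V(H)|$, using that a free vertex is never a cut vertex and never lies in a cut set, that $H$ and $H\setminus\{f\}$ have the same cut vertices, and that $f$ stays free in $H\setminus\{w\}$ and in $H_w$ for every other vertex $w$, so that the recursions defining strong unmixedness for $H$ and for $H\setminus\{f\}$ run in lockstep. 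As for (A): for a cut vertex $v$ one has $\CC(G_v)=\{S\in\CC(G) : v\notin S\}$, and for such $S$ the connected components of $G_v\setminus S$ coincide with those of $G\setminus S$, so the minimal prime of $J_{G_v}$ attached to $S$ equals the one of $J_G$ attached to $S$; hence the minimal primes of $J_{G_v}$ form a subset of those of $J_G$, and since binomial edge ideals are radical, $J_G$ unmixed forces $J_{G_v}$ unmixed.

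The main obstacle is the free-vertex principle behind (B). Its unmixedness hypothesis cannot be omitted: if $H$ is a triangle with an extra vertex $f$ joined to two of its three vertices, then $J_{H\setminus\{f\}}=J_{K_3}$ is strongly unmixed while $J_H$ is not even unmixed. Consequently the induction must carry along, at every step, the facts that deleting a cut vertex and completing the neighbourhood of a vertex both preserve unmixedness of the binomial edge ideal --- which can be extracted from \cite{HHHKR10} and \cite{RR14} --- in addition to the combinatorial statements above that keep $f$ free. Once that bookkeeping is in place the two recursions agree term by term, and (B), hence the proposition, follows; the remaining deductions are the short ones indicated above.
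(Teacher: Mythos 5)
Your overall route coincides with the paper's: (1)$\Rightarrow$(2),(3) is immediate from the definition, and both converses are reduced to your facts (A) (that $J_{G_v}$ is unmixed whenever $J_G$ is, which the paper takes from \cite[Lemma 4.5]{BMS22}) and (B) (that for the free vertex $v$ of $G_v$, strong unmixedness of $J_{G_v}$ and of $J_{G_v\setminus\{v\}}$ are equivalent, which the paper splits between \cite[Lemma 3.16]{SS22} for the deletion direction and \cite[Lemma 11]{LMRR21} for the other). Your argument for (A) is sound, and if you cite (B) — it lives in \cite{LMRR21} and \cite{SS22}, not in \cite{BMS22} — the proof is complete and is essentially the paper's.

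The fallback induction you sketch for (B), however, rests on a false intermediate claim: $H$ and $H\setminus\{f\}$ do \emph{not} in general have the same cut vertices, and the two recursions do not run in lockstep. Take $H$ to be a triangle on $\{a,b,w\}$ with a whisker $\{w,f\}$: here $f$ is free and $J_H$ is unmixed, $H\setminus\{f\}=K_3$ satisfies the base clause of the definition of strong unmixedness, yet $H$ is not complete and must enter the recursive clause at the cut vertex $w$ — which is not a cut vertex of $H\setminus\{f\}$ at all. What is true is only one inclusion (every cut vertex of $H\setminus\{f\}$ is a cut vertex of $H$), so the induction must be set up asymmetrically: in the direction "$J_{H\setminus\{f\}}$ strongly unmixed $\Rightarrow J_H$ strongly unmixed" one reuses the cut vertex provided by the recursion for $H\setminus\{f\}$, and the base case where all components of $H\setminus\{f\}$ are complete needs a separate direct argument (essentially \cite[Lemma 12]{LMRR}, that whiskering a complete graph preserves strong unmixedness). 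This asymmetry is precisely why the paper outsources (B) to the two cited lemmas rather than reproving it; as written, your "lockstep" step would fail at the example above.
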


\begin{proof}
Clearly (1) implies (2) and (3). Since $J_G$ and $J_{G\setminus \{v\}}$ are unmixed, \cite[Lemma 3.16]{SS22} says that if (2) holds, then $J_{G_v \setminus \{v\}}$ is strongly unmixed. Assume now that (3) holds. Since $J_{G}$ is unmixed, also $J_{G_v}$ is unmixed by \cite[Lemma 4.5]{BMS22}. Moreover, $v$ is a free vertex of $G_v$ and then \cite[Lemma 11]{LMRR21} implies that $J_{G_v}$ is strongly unmixed.
\end{proof}

Note that, as for accessibility, the strong unmixedness of $J_G$ is also a purely combinatorial property, i.e., it depends only on $G$ and not on the field $K$. By \cite[Theorem 5.11]{BMS22} and \cite[Theorem 2]{LMRR21} for any graph $G$ we have:
\begin{equation}
J_G \text{ strongly unmixed} \Rightarrow R/{J_G} \text{ Cohen-Macaulay} \Rightarrow R/{J_G} \text{ satisfies Serre's condition } (S_2) \Rightarrow G \text{ accessible.} \tag{\ref{Conditions}}
\end{equation}
It is not known whether any of these implications can be reversed. In order to show that $G$ accessible implies $J_G$ strongly unmixed, i.e., the above conditions are all equivalent, as a consequence of \cite[Proposition 5.13 and Corollary 5.16]{BMS22} it is enough to answer in the affirmative the following question:

\begin{question}[{\cite[Question 5.17]{BMS22}}] \label{Q.Main_question}
If $G$ is a connected non-complete accessible graph, does there exist a cut vertex $v$ of $G$ such that $J_{G\setminus \{v\}}$ is unmixed?
\end{question}

If $H$ is an induced subgraph of $G$, we denote by $N_H(v)=\{w \in V(H) \mid \{v,w\} \in E(G)\}$ the set of neighbors of $v$ in $H$ and define $N_H[v] = N_H(v) \cup \{v\}$. In order to study Question \ref{Q.Main_question} we frequently use the following criterion:

\begin{proposition}[{\cite[Proposition 5.2]{BMS22}}]\label{P.good_cut_vertex}
Let $v$ be a cut vertex of a connected graph $G$ and assume that $J_G$ is unmixed. Let $H_1$ and $H_2$ denote the connected components of $G\setminus \{v\}$. The
following statements are equivalent:
\begin{enumerate}
\item $J_{G \setminus \{v\}}$ is unmixed;
\item if $S$ is a cut set of $G \setminus \{v\}$, then $N_{H_1}(v) \not\subseteq S$ and $N_{H_2}(v) \not\subseteq S$.
\end{enumerate}
\end{proposition}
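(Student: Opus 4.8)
The plan is to reformulate both conditions through the combinatorial unmixedness criterion \cite[Lemma 2.5]{RR14} and then transfer information between cut sets of $G$ and cut sets of $G\setminus\{v\}$ by inserting or deleting the cut vertex $v$. Three elementary preliminaries make this work. First, since $v$ is a cut vertex, $\{v\}\in\CC(G)$, so unmixedness of $J_G$ forces $c_G(\{v\})=2$; hence $G\setminus\{v\}$ has exactly the two components $H_1,H_2$, and $N_{H_1}(v)\neq\emptyset\neq N_{H_2}(v)$ since $G$ is connected. Second, for every $T\subseteq V(G)\setminus\{v\}$ one has $c_G(T)=c_{G\setminus\{v\}}(T)+1-k_T$, where $k_T$ is the number of connected components of $(G\setminus\{v\})\setminus T$ that meet $N_G(v)$ (so $v$ is a component on its own when $k_T=0$). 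Third, the cut sets of $G\setminus\{v\}=H_1\sqcup H_2$ are exactly the sets $S=S_1\cup S_2$ with $S_i\in\CC(H_i)$, with $c_{G\setminus\{v\}}(S)=c_{H_1}(S_1)+c_{H_2}(S_2)$; since $G\setminus\{v\}$ has two components, \cite[Lemma 2.5]{RR14} says that $J_{G\setminus\{v\}}$ is unmixed if and only if $c_{G\setminus\{v\}}(S)=|S|+2$ for every $S\in\CC(G\setminus\{v\})$.

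To prove $(2)\Rightarrow(1)$, fix $S\in\CC(G\setminus\{v\})$. By $(2)$ the vertex $v$ has a neighbour outside $S$ in each of $H_1,H_2$, so $k_S\geq 2$, and using the count above one checks directly that $S\cup\{v\}\in\CC(G)$: at the vertex $v$ one has $c_G(S\cup\{v\})=c_{G\setminus\{v\}}(S)>c_{G\setminus\{v\}}(S)+1-k_S=c_G(S)$, and at a vertex $i\in S$ one has $c_G(S\cup\{v\})=c_{G\setminus\{v\}}(S)>c_{G\setminus\{v\}}(S\setminus\{i\})=c_G((S\setminus\{i\})\cup\{v\})$. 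Then unmixedness of $J_G$ gives $c_{G\setminus\{v\}}(S)=c_G(S\cup\{v\})=|S|+2$, and as $S$ was arbitrary, $J_{G\setminus\{v\}}$ is unmixed.

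To prove $(1)\Rightarrow(2)$, I would argue by contraposition. Suppose some $S\in\CC(G\setminus\{v\})$ satisfies, say, $N_{H_1}(v)\subseteq S$, and set $S_1:=S\cap V(H_1)$, so $N_{H_1}(v)\subseteq S_1$ and $S_1\in\CC(H_1)$, hence $S_1\in\CC(G\setminus\{v\})$ with $c_{G\setminus\{v\}}(S_1)=c_{H_1}(S_1)+1$. Because $N_{H_1}(v)\subseteq S_1$, deleting $S_1$ from $G$ detaches $v$ from the whole of $H_1\setminus S_1$ while $v$ stays attached to $H_2$, so $k_{S_1}=1$ and $c_G(S_1)=c_{G\setminus\{v\}}(S_1)$. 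The crucial step is to check that $S_1\in\CC(G)$: for $i\in S_1\cap N_{H_1}(v)$, deleting $S_1\setminus\{i\}$ re-attaches $v$ to exactly the component of $i$, and for $i\in S_1\setminus N_{H_1}(v)$, $v$ remains detached from $H_1$; in either case the component count strictly decreases, using $S_1\in\CC(H_1)$. Now unmixedness of $J_G$ forces $c_G(S_1)=|S_1|+1$, hence $c_{G\setminus\{v\}}(S_1)=|S_1|+1\neq|S_1|+2$; since $S_1\supseteq N_{H_1}(v)\neq\emptyset$, this is a genuine cut set of $G\setminus\{v\}$ violating the criterion, so $J_{G\setminus\{v\}}$ is not unmixed.

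The cut-set criterion is used as a black box, so the only real work is the component-counting bookkeeping — tracking how $c_G(\cdot)$ changes when $v$ is re-inserted into, or a single vertex is removed from, a cut set. I expect the main obstacle to be the case analysis showing $S_1\in\CC(G)$ in the last paragraph, the one point where the containment $N_{H_1}(v)\subseteq S$ is used essentially.
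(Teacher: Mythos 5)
Your argument is correct. Note that the paper itself does not prove this proposition --- it is imported verbatim from \cite[Proposition 5.2]{BMS22} --- so there is no in-paper proof to compare against; judged on its own, your proof is complete and uses exactly the natural toolkit (the cut-set/unmixedness criterion of \cite[Lemma 2.5]{RR14} together with component bookkeeping when $v$ is inserted into or deleted from a cut set), which is also the style of argument the source paper uses. The three preliminaries all check out: $c_G(\{v\})=|\{v\}|+1$ forces exactly two components $H_1,H_2$ with $N_{H_i}(v)\neq\emptyset$; the formula $c_G(T)=c_{G\setminus\{v\}}(T)+1-k_T$ is valid including the degenerate case $k_T=0$; and the description of $\CC(H_1\sqcup H_2)$ as $\{S_1\cup S_2 : S_i\in\CC(H_i)\}$ is the standard componentwise characterization. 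In $(2)\Rightarrow(1)$ the verification that $S\cup\{v\}\in\CC(G)$ is exactly where $k_S\geq 2$ is needed, and in $(1)\Rightarrow(2)$ the two-case check that $S_1\in\CC(G)$ (according to whether the deleted vertex lies in $N_{H_1}(v)$ or not) goes through because $N_{H_1}(v)\subseteq S_1$ guarantees that re-inserting a single vertex can reattach $v$ to at most one component of $H_1\setminus(S_1\setminus\{i\})$; you also correctly handle the nonemptiness of $S_1$ needed to contradict the criterion. The only stylistic remark is that passing from $S$ to $S_1=S\cap V(H_1)$ is a genuine simplification worth keeping: it is what makes $k_{S_1}=1$ and the case analysis clean.
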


Note that, in an accessible graph $G$ it is not necessarily true that $J_{G \setminus \{v\}}$ is unmixed for every cut vertex $v$ of $G$.

\begin{example}
Let $G$ be the accessible graph in Figure \ref{F.oneGoodCutVertex}, whose cut vertices are $1,4,8$. In this case, $J_{G \setminus \{1\}}$ is unmixed, whereas $J_{G \setminus \{4\}}$ and $J_{G \setminus \{8\}}$ are not unmixed by Proposition \ref{P.good_cut_vertex}. In fact,
\begin{itemize}
\item for the cut vertex $4$, if $H_1 = G \setminus \{4,12\}$, then $N_{H_1}(4) \subset S_1 = \{1,3,5,8\}$, where $S_1$ is a cut sets of $G \setminus \{4\}$;
\item for the cut vertex $8$, if $H_1 = G \setminus \{8,13\}$, then $N_{H_1}(8) \subset S_2 = \{1,4,7,9\}$, where $S_2$ is a cut sets of $G \setminus \{8\}$.
\end{itemize}

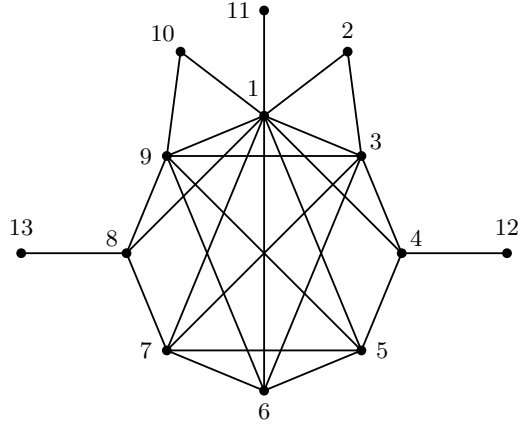
\begin{figure}[ht!]
\begin{tikzpicture}[scale=0.7]
\node[label={below:{\small $6$}}] (a) at (1,-0.19891236737965817) {};
\node[label={left:{\small $7$}}] (a) at (-0.847759065022574,0.5664544973505217) {};
\node[label={right:{\small $5$}}] (a) at (2.847759065022574,0.566454497350521) {};
\node[label={above right:{\small $4$}}] (a) at (3.613125929752754,2.414213562373095) {};
\node[label={above right:{\small $3$}}] (a) at (2.847759065022575,4.261972627395669) {};
\node[label={[label distance=2mm]100:{\small $1$}}] (a) at (1,5.027339492125849) {};
\node[label={left:{\small $9$}}] (a) at (-0.847759065022573,4.26197262739567) {};
\node[label={above left:{\small $8$}}] (a) at (-1.6131259297527538,2.4142135623730967) {};
\node[label={above:{\small $13$}}] (a) at (-3.6131259297527536,2.414213562373095) {};
\node[label={above:{\small $12$}}] (a) at (5.613125929752754,2.414213562373095) {};
\node[label={left:{\small $11$}}] (a) at (1,7.027339492125849) {};
\node[label={above left:{\small $10$}}] (a) at (-0.5867066805824697,6.244862350143292) {};
\node[label={above:{\small $2$}}] (a) at (2.586706680582472,6.24486235014329) {};
\draw (-0.847759065022574,0.5664544973505217)-- (1,-0.19891236737965817);
\draw (1,-0.19891236737965817)-- (2.847759065022574,0.566454497350521);
\draw (2.847759065022574,0.566454497350521)-- (3.613125929752754,2.414213562373095);
\draw (3.613125929752754,2.414213562373095)-- (2.847759065022575,4.261972627395669);
\draw (2.847759065022575,4.261972627395669)-- (1,5.027339492125849);
\draw (1,5.027339492125849)-- (-0.847759065022573,4.26197262739567);
\draw (-0.847759065022573,4.26197262739567)-- (-1.6131259297527538,2.4142135623730967);
\draw (-1.6131259297527538,2.4142135623730967)-- (-0.847759065022574,0.5664544973505217);
\draw (1,7.027339492125849)-- (1,5.027339492125849);
\draw (3.613125929752754,2.414213562373095)-- (5.613125929752754,2.414213562373095);
\draw (-1.6131259297527538,2.4142135623730967)-- (-3.6131259297527536,2.414213562373095);
\draw (-0.5867066805824697,6.244862350143292)-- (-0.847759065022573,4.26197262739567);
\draw (-0.5867066805824697,6.244862350143292)-- (1,5.027339492125849);
\draw (1,5.027339492125849)-- (2.586706680582472,6.24486235014329);
\draw (2.586706680582472,6.24486235014329)-- (2.847759065022575,4.261972627395669);
\draw (1,5.027339492125849)-- (-1.6131259297527538,2.4142135623730967);
\draw (1,5.027339492125849)-- (-0.847759065022574,0.5664544973505217);
\draw (1,5.027339492125849)-- (1,-0.19891236737965817);
\draw (1,5.027339492125849)-- (2.847759065022574,0.566454497350521);
\draw (1,5.027339492125849)-- (3.613125929752754,2.414213562373095);
\draw (-0.847759065022573,4.26197262739567)-- (2.847759065022575,4.261972627395669);
\draw (-0.847759065022574,0.5664544973505217)-- (2.847759065022574,0.566454497350521);
\draw (-0.847759065022573,4.26197262739567)-- (2.847759065022574,0.566454497350521);
\draw (-0.847759065022573,4.26197262739567)-- (1,-0.19891236737965817);
\draw (2.847759065022575,4.261972627395669)-- (-0.847759065022574,0.5664544973505217);
\draw (2.847759065022575,4.261972627395669)-- (1,-0.19891236737965817);
\end{tikzpicture}
\caption{A graph $G$ with $J_{G \setminus \{1\}}$ unmixed} \label{F.oneGoodCutVertex}
\end{figure}
\end{example} 

\section{Blocks with whiskers}

In this section we show that to answer Question \ref{Q.Main_question} we can restrict to consider blocks with whiskers. Moreover, given a block with whiskers $G$, we find several sufficient conditions on the structure of $G$ for the existence of a cut vertex $v$ such that $J_{G \setminus \{v\}}$ is unmixed. Combining these results, we prove that $G$ accessible implies $J_G$ strongly unmixed, and hence Conjecture \ref{conjecture} holds true, for new classes of graphs and for all graphs with up to $12$ vertices.

Recall that, for a connected graph $G$, $J_G$ is unmixed if and only if $c_G(S) = |S| + 1$ for every $S \in \CC(G)$, see \cite[Lemma 2.5]{RR14}. For unmixed binomial edge ideals we characterize the cut sets of $G$ as the subsets $S$ of $V(G)$ satisfying a simple bound on the number of components of $G \setminus S$. This result will be useful throughout the paper.

\begin{lemma}\label{L.CutsetsInUnmixedGraphs}
Let $G$ be a connected graph with $J_G$ unmixed and $S$ be a subset of $V(G)$. The following are equivalent:
\begin{enumerate}
\item $S$ is a cut set of $G$;
\item $c_G(S) = |S|+1$;
\item $c_G(S) \geq |S|+1$.
\end{enumerate}
\end{lemma}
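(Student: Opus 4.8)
The plan is to prove the cyclic chain of implications $(1) \Rightarrow (2) \Rightarrow (3) \Rightarrow (1)$. The implication $(1) \Rightarrow (2)$ is immediate from the quoted characterization of unmixedness (\cite[Lemma 2.5]{RR14}), since for a connected graph $J_G$ unmixed means exactly $c_G(S) = |S| + 1$ for every cut set $S$. The implication $(2) \Rightarrow (3)$ is trivial. So the whole content lies in $(3) \Rightarrow (1)$: I must show that if $S \subseteq V(G)$ satisfies $c_G(S) \geq |S| + 1$, then $S$ is a cut set, i.e., either $S = \emptyset$ or $c_G(S) > c_G(S \setminus \{i\})$ for every $i \in S$.

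The key step for $(3) \Rightarrow (1)$ is to work inside $S$ by downward induction, using the elementary fact that removing a single vertex from the deleted set can drop the number of components by at most $1$: for any $T \subseteq V(G)$ and any $i \in T$ one has $c_G(T) \leq c_G(T \setminus \{i\}) + 1$ (deleting one fewer vertex can merge at most the components that were adjacent to that vertex into one, while leaving everything else, hence $c_G(T\setminus\{i\}) \ge c_G(T) - 1$; I should state this cleanly, perhaps noting $c_G(T \setminus \{i\}) \geq c_G(T) - 1$ always). Starting from $S$ with $c_G(S) \geq |S| + 1$ and peeling off vertices one at a time down to $\emptyset$, each step decreases $|T|$ by exactly $1$ and decreases $c_G(T)$ by at most $1$, and $c_G(\emptyset) = 1$ (as $G$ is connected). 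Since we need to go from a value $\geq |S| + 1$ down to $1$ in exactly $|S|$ steps, each of which costs at most $1$, every inequality along the way must be an equality: for every intermediate set $T$ in the chain and every vertex removed, $c_G(T) = c_G(T \setminus \{i\}) + 1$, and moreover $c_G(S) = |S| + 1$ exactly. In particular this forces, for each $i \in S$, that $c_G(S) = c_G(S \setminus \{i\}) + 1 > c_G(S \setminus \{i\})$ — but I need this for \emph{every} $i \in S$, not just for the ones appearing first in one particular peeling order; this is handled by observing that any $i \in S$ can be chosen as the first vertex removed, so the chain-of-equalities argument applies with $T = S$ and that $i$.

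Let me restructure to make the last point airtight. Fix an arbitrary $i \in S$ (assuming $S \neq \emptyset$). I want $c_G(S) > c_G(S \setminus \{i\})$. Since $c_G(S) \leq c_G(S \setminus \{i\}) + 1$ always, it suffices to rule out $c_G(S) = c_G(S \setminus \{i\})$. Now $S \setminus \{i\}$ is a set of size $|S| - 1$, and by the same peeling-to-$\emptyset$ argument we get $c_G(S \setminus \{i\}) \leq (|S| - 1) + 1 = |S|$; combined with $c_G(S) \geq |S| + 1$ we would get $|S| + 1 \leq c_G(S) = c_G(S \setminus \{i\}) \leq |S|$, a contradiction. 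Hence $c_G(S) > c_G(S \setminus \{i\})$ for every $i \in S$, so $S$ is a cut set. I do not expect any real obstacle here — the only thing to be careful about is the basic monotonicity estimate $c_G(T) \leq c_G(T \setminus \{i\}) + 1$ and the base case $c_G(\emptyset) = 1$ from connectedness — everything else is a counting argument along a deletion chain.
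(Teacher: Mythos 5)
Your reduction to $(3)\Rightarrow(1)$ is right, and $(1)\Rightarrow(2)\Rightarrow(3)$ is handled correctly, but the key step of your $(3)\Rightarrow(1)$ argument rests on a false inequality. You claim that for any $T\subseteq V(G)$ and $i\in T$ one has $c_G(T)\leq c_G(T\setminus\{i\})+1$, i.e.\ that adding the vertex $i$ back to $G\setminus T$ can decrease the number of components by at most one. This is not true: adding back a single vertex merges \emph{all} the components it is adjacent to into one, and there may be many of them. For the star $K_{1,4}$ with center $c$ and $T=\{c\}$ one has $c_G(T)=4$ but $c_G(\emptyset)=1$. Consequently the bound $c_G(T)\leq |T|+1$ for all subsets $T$, which your peeling argument would establish, is also false in general. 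A second warning sign is that your $(3)\Rightarrow(1)$ argument never uses the unmixedness of $J_G$, yet the implication genuinely fails without it: in $K_{1,4}$ with center $c$ and a leaf $\ell$, the set $S=\{c,\ell\}$ satisfies $c_G(S)=3\geq |S|+1$, but $S$ is not a cut set since $c_G(S\setminus\{\ell\})=4\geq c_G(S)$. (Of course $J_{K_{1,4}}$ is not unmixed, which is why this is consistent with the lemma.)

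The paper's proof takes an entirely different, algebraic route: it uses the fact from \cite[Lemma 3.1]{HHHKR10} that each $S\subseteq V(G)$ determines a prime $P_S(G)\supseteq J_G$ of height $m-c_G(S)+|S|$. Condition (3) forces $\hgt(P_S(G))\leq m-1=\hgt(P_\emptyset(G))=\hgt(J_G)$ (the last equality is where unmixedness enters), hence $P_S(G)$ is a minimal prime of $J_G$, and minimal primes correspond exactly to cut sets by \cite[Corollary 3.9]{HHHKR10}. If you want a purely combinatorial proof in the spirit of your attempt, the viable strategy is the opposite of yours: starting from $S$ with $c_G(S)\geq |S|+1$, repeatedly \emph{delete} from $S$ any vertex $i$ with $c_G(S)\leq c_G(S\setminus\{i\})$ until you reach a cut set $T\subseteq S$ with $c_G(T)\geq c_G(S)\geq |S|+1$; if $T\subsetneq S$ this gives $c_G(T)>|T|+1$, contradicting unmixedness via \cite[Lemma 2.5]{RR14}, so $T=S$ and $S$ is a cut set. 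As written, however, your proof does not work.
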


\begin{proof} Let $m = |V(G)|$. The implications $(1)\Rightarrow (2) \Rightarrow (3)$ are clear. Recall that, for every $S \subseteq V(G)$, there is a prime ideal $P_S(G)$ containing $J_G$ with $\hgt(P_S(G)) = m - c_G(S) + |S|$, see \cite[Lemma 3.1]{HHHKR10}. We refer the reader to \cite[Section 3]{HHHKR10} for more details. Thus, condition (3) implies that
\[
\hgt(P_S(G)) = m - c_{G}(S) + |S| \leq m - 1 = m - c_{G}(\emptyset) + |\emptyset| = \hgt(P_{\emptyset}(G)) = \hgt(J_G).
\]
Since $P_S(G)$ contains $J_G$, it follows that $\hgt(P_S(G)) = \hgt(J_G)$. Therefore $P_S(G)$ is a minimal prime of $J_G$ and \cite[Corollary 3.9]{HHHKR10} implies that $S$ is a cut set of $G$.
\end{proof}

\begin{remark} \label{R.union}
Let $G$ be a connected graph with $J_G$ unmixed. If $S$ is a cut set of $G$ and $v \in V(G) \setminus S$, Lemma \ref{L.CutsetsInUnmixedGraphs} implies that $S \cup \{v\}$ is a cut set if and only if $v$ is a cut vertex of $G \setminus S$.
\end{remark}

\subsection{Reduction to block with whiskers}
Recall that a \textit{block} is a graph with no cut vertices. A block of a graph $G$ is a maximal induced subgraph of $G$ that is a block. Any graph can be decomposed into blocks, where each two blocks share at most one cut vertex. 

Let $G$ be a connected graph and $B$ a block of $G$. Denote by $W=\{v_1,\dots,v_r\}$ the set of cut vertices of $G$ that are vertices of $B$. Then
\begin{equation}\label{eq:blockandGi}
G = B \cup \left(\bigcup_{i=1}^r G_i\right),
\end{equation}
where $V(G_i)\cap V(B)=\{v_i\}$ for $i=1,\ldots,r$, and the connected components of $G \setminus W$ are $G_1\setminus\{v_1\}, \dots, G_r\setminus\{v_r\}$ and $B\setminus W$ if it is non-empty.

If $B$ is a block and $W = \{v_1,\dots,v_r\} \subseteq V(B)$, we denote by $\B^W$ the graph obtained by adding a whisker to each vertex in $W$. More precisely:
\begin{enumerate}
    \item $V(\B^W)=V(B) \cup \{f_1,\ldots,f_r\}$, where $f_1,\dots,f_r$ are new vertices;
    \item $E(\B^W)=E(B) \cup \{\{v_i,f_i\} : i = 1,\dots,r\}$, where $\{v_i,f_i\}$ is the whisker attached to $v_i$.
\end{enumerate}
We call $\B^W$ a \textit{block with whiskers}. Whenever we do not need to specify the set $W$, we simply write $\B$.

\medskip

The next result implies that if Question \ref{Q.Main_question} has a positive answer for all blocks with whiskers, then Conjecture \ref{conjecture} holds for all graphs.

\begin{theorem}\label{T.minus_v_unmixed}
If in every accessible block with whiskers $G$ there is a cut vertex $v$ such that $J_{G \setminus \{v\}}$ is unmixed, then the binomial edge ideal of every accessible graph is strongly unmixed.
\end{theorem}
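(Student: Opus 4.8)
The plan is to reduce the statement, in two standard steps, to the hypothesis. Since both strong unmixedness and accessibility of $J_G$ decompose over the connected components of $G$, we may assume $G$ is connected. The first step is already recorded in the discussion preceding Question \ref{Q.Main_question}: by \cite[Proposition 5.13 and Corollary 5.16]{BMS22}, the binomial edge ideal of \emph{every} accessible graph is strongly unmixed as soon as Question \ref{Q.Main_question} has an affirmative answer, i.e. as soon as every connected non-complete accessible graph $G$ admits a cut vertex $v$ with $J_{G\setminus\{v\}}$ unmixed. So it suffices to deduce this last statement from the hypothesis, which concerns only accessible blocks with whiskers.

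Next comes the reduction to blocks with whiskers. Let $G$ be connected, non-complete and accessible; then $G$ has a cut vertex, so writing $G$ as in \eqref{eq:blockandGi} along a block $B$ of $G$ there are at least two blocks, and every block $B$ carries a non-empty set $W_B$ of cut vertices of $G$. I would attach a whisker to each vertex of $W_B$ to form $\B^{W_B}$, and then invoke the results underlying the reduction to blocks with whiskers, namely \cite[Proposition 3]{LMRR21}, \cite[Theorem 1.2]{SS22} and \cite[Corollary 3.13]{SS22b}, to extract the two facts I actually need: first, that $\B^{W_B}$ is again accessible; second, that if $v$ is a cut vertex of $\B^{W_B}$ for which $J_{\B^{W_B}\setminus\{v\}}$ is unmixed, then $v$ is a cut vertex of $G$ for which $J_{G\setminus\{v\}}$ is unmixed. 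Granting this, since $\B^{W_B}$ has at least one whisker it is a non-complete accessible block with whiskers, so the hypothesis supplies such a $v$, and the required cut vertex of $G$ is produced, answering Question \ref{Q.Main_question}.

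The main obstacle will be the second fact above — the \emph{lifting} of a good cut vertex from $\B^{W_B}$ to $G$ — because $\B^{W_B}\setminus\{v\}$ and $G\setminus\{v\}$ are genuinely different graphs. The cut vertices of $\B^{W_B}$ are exactly the vertices of $W_B$, so $v=v_i\in W_B$; removing $v$ splits $\B^{W_B}\setminus\{v\}$ into the isolated whisker vertex $f_i$ and the connected graph $B\setminus\{v\}$ carrying the remaining whiskers, whereas $G\setminus\{v\}$ is the disjoint union of the component $H_B$ containing $B\setminus\{v\}$ (together with the branches $G_j$, $j\neq i$) and the components of $G_i\setminus\{v\}$. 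Using Lemma \ref{L.CutsetsInUnmixedGraphs} and Proposition \ref{P.good_cut_vertex}, one must show on one hand that a cut set of $G\setminus\{v\}$ containing $N_{H_B}(v)=N_B(v)$ restricts to a cut set of $\B^{W_B}\setminus\{v\}$ still containing $N_B(v)$, contradicting the unmixedness of $J_{\B^{W_B}\setminus\{v\}}$, and on the other hand that no cut set of $G\setminus\{v\}$ contains $N_H(v)$ for a component $H$ of $G_i\setminus\{v\}$ — the latter handled by an induction on the number of blocks, peeling off the branch $G_i$ after whiskering it at $v$ and using accessibility there. This localization of cut sets across a cut vertex is the technical heart of the argument; I expect it to be essentially bookkeeping once the cited reductions are in place, so most of the care goes into applying \cite[Proposition 3]{LMRR21}, \cite[Theorem 1.2]{SS22} and \cite[Corollary 3.13]{SS22b} with exactly the right hypotheses.
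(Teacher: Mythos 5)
Your overall strategy (reduce to Question \ref{Q.Main_question} via \cite[Proposition 5.13 and Corollary 5.16]{BMS22}, then whisker a block and invoke the hypothesis) matches the paper's starting point, but the step you defer as ``essentially bookkeeping'' is where the entire proof lives, and the two facts you want to extract are not available in the form you state them. First, it is \emph{not} true that a cut set $S$ of $G\setminus\{v\}$ containing $N_B(v)$ restricts to a cut set $T=S\cap V(B)$ of $\B^{W_B}\setminus\{v\}$: a cut vertex $w=v_j\in T$ may owe its reconnecting role in $G\setminus\{v\}$ entirely to components lying inside its branch $G_j$, and if $N_B(w)\setminus\{v\}\subseteq T$ then in $\B^{W_B}\setminus\{v\}$ adding $w$ back merely attaches $w$ to the isolated leaf $f_j$ without decreasing the number of components, so $T$ is not a cut set there. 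The paper's proof is built around precisely this failure: from the non-cut-set $T_v$ it extracts such a $w$ with $N_B(w)\setminus\{v\}\subseteq T_v$, and it then needs a \emph{second} bad cut set $S_w\supseteq N_B(w)$ associated with this other cut vertex $w$ in order to form two cut sets $U_1$, $U_2$ on opposite sides of $w$ whose union violates the unmixedness count $c_H(U)=|U|+1$. That second cut set is supplied by \cite[Proposition 6.1]{BMS22} applied to a vertex-minimal counterexample $H$ in which no cut vertex is good: that proposition produces a single block $B$ such that \emph{every} cut vertex of $H$ in $V(B)$ is bad on the $B$-side. Your argument, which fixes an arbitrary block of an arbitrary accessible $G$, has no access to this uniform choice and cannot run the $U_1\cup U_2$ contradiction.

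Second, even granting control of the $B$-side, your treatment of the branch side (that no cut set of $G\setminus\{v\}$ contains the neighbourhood of $v$ in a component of $G_i\setminus\{v\}$, ``handled by an induction on the number of blocks'') is not a proof: the cut vertex supplied by the hypothesis for $\B^{W_B}$ is chosen with no regard to the branch $G_i$, so there is no reason it should be good from that side; and the cited reductions \cite[Theorem 1.2]{SS22} and \cite[Corollary 3.13]{SS22b} reduce Conjecture \ref{conjecture} itself to blocks with whiskers, not the lifting of good cut vertices needed here. The paper sidesteps both issues simultaneously by arguing by contradiction from a minimal accessible graph with non-strongly-unmixed ideal---so that, by \cite[Corollary 5.16]{BMS22} and Proposition \ref{P.stronglyUnmixed}, no cut vertex at all is good---rather than by lifting a good cut vertex directly. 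As written, your proposal has a genuine gap at its central step.
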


\begin{proof}
Assume by contradiction that there exist accessible graphs whose binomial edge ideal is not strongly unmixed and among them choose a graph $H$ with the minimum number of vertices. Clearly, $H$ is connected and not complete.

We claim that there are no cut vertices of $H$ such that $J_{H \setminus \{v\}}$ is unmixed. In fact if there is a cut vertex $v$ such that $J_{H \setminus \{v\}}$ is unmixed, \cite[Corollary 5.16]{BMS22} ensures that both $H \setminus \{v\}$ and $H_v \setminus \{v\}$ are accessible. Since they have fewer vertices than $H$, it follows that $J_{H \setminus \{v\}}$ and $J_{H_v \setminus \{v\}}$ are strongly unmixed. By Proposition \ref{P.stronglyUnmixed}, this implies that $J_H$ is strongly unmixed, a contradiction.

By \cite[Proposition 6.1]{BMS22}, we have that there is a block $B$ of $H$ such that for every cut vertex $v$ of $H$ contained in $V(B)$ there exists a cut set $S_v$ of $H \setminus \{v\}$ containing $N_B(v)$.

By \cite[Proposition 3]{LMRR21}, the graph $\B$ is accessible. By assumption there exists a cut vertex $v$ of $\B$ such that $J_{\B \setminus \{v\}}$ is unmixed. Clearly, the cut vertices of $\B$ are exactly the cut vertices of $H$ contained in $V(B)$. Hence, we already know that there exists a cut set $S_v$ of $H \setminus \{v\}$ containing $N_B(v)$. Let $T_v=S_v \cap V(B)$. Since $J_{\B \setminus \{v\}}$ is unmixed, $T_v$ cannot be a cut set of $\B\setminus \{v\}$, see Proposition \ref{P.good_cut_vertex}. Therefore, there exists $w \in T_v$ such that $c_{\B \setminus \{v\}}(T_v) \leq c_{\B\setminus \{v\}}(T_v \setminus \{w\})$. Notice that $w$ is a cut vertex of $\B$ because otherwise $S_v$ is a cut set of $H \setminus \{v\}$. Hence, by definition of $\B$, $w$ is adjacent to a leaf in $\B$ and then $c_{\B\setminus \{v\}}(T_v) \leq c_{\B\setminus \{v\}}(T_v \setminus \{w\})$ implies that $N_B(w) \setminus \{v\}$ is contained in $T_v$. Let $H_1$ and $H_2$ be the connected components of $H \setminus \{w\}$, where $H_2$ contains $B \setminus \{w\}$. Consider $U_1=S_v \cap H[V(H_1) \cup \{w\}]$, where $H[V(H_1) \cup \{w\}]$ denotes the subgraph induced by $H$ on the vertices $V(H_1) \cup \{w\}$. Since $S_v$ is a cut set of $H \setminus \{v\}$ and $N_{B}(w) \setminus \{v\} \subseteq T_v$, it is easy to see that $U_1$ is a cut set of $H \setminus \{v\}$ and of $H$.

We also know that there exists a cut set $S_w$ of $H\setminus \{w\}$ containing $N_B(w)$. It is straightforward to check that $U_2=S_w \cap V(H_2)$ is still a cut set of $H \setminus \{w\}$ containing $N_B(w)$; in particular, $U_2$ is also a cut set of $H$. Finally, consider $U_1 \cup U_2$. By construction of $U_1$ and $U_2$, it follows that $U_1 \cup U_2$ is a cut set of $H$. However, since $U_1 \subseteq V(H_1) \cup \{w\}$ and $U_2 \subseteq V(H_2)$ are cut sets of $H$, $w \in U_1$, and $N_B(w) \subseteq U_2$, we have that
\[
|U_1 \cup U_2| = c_H(U_1 \cup U_2)-1 = (c_H(U_1)-1) + (c_H(U_2)-1) -1 = |U_1| + |U_2| - 1 = |U_1 \cup U_2| - 1,
\]
where the first and third equalities follow by the unmixedness of $J_H$ (and the second equality holds because $U_1$ leaves one component in $H_2$ and $U_2$ leaves one component in $H[V(H_1) \cup \{w\}]$).
This yields a contradiction.
\end{proof}

The idea of focusing on blocks with whiskers to study Conjecture \ref{conjecture} was also used in \cite[Theorem 1.2]{SS22} and \cite[Corollary 3.13]{SS22b}. The latter result and \cite[Proposition 3]{LMRR21} imply that it is enough to prove Conjecture \ref{conjecture} for blocks with whiskers.

\begin{remark} \label{R.ReductionFewVertices}
The proof of Theorem \ref{T.minus_v_unmixed} also allows to show that if every accessible block with whiskers $G$ with at most $m$ vertices has a cut vertex $v$ such that $J_{G\setminus \{v\}}$ is unmixed, then every accessible graph with at most $m$ vertices has a strongly unmixed binomial edge ideal. In fact, if $H$ and $\B$ are as in the proof above, we have $|V(\B)| \leq |V(H)| \leq m$.
\end{remark}

\subsection{Accessible blocks with whiskers}

In the rest of the paper we use the following notation:

\begin{notation}
We denote by $\B$ a block $B$ with a whisker attached to each of the vertices $v_1, \dots, v_k \in V(B)$ and call $\B$ a \textit{block with $k$ whiskers}. We denote by $f_i$ the leaf adjacent to $v_i$ for every $i= 1 \dots, k$. We also assume that $B$ has $n$ vertices, hence $\B$ has $n+k$ vertices.
\end{notation}

In this subsection we may always assume that $k \geq 3$. In fact, if $k = 1,2$, the induced subgraph on the cut vertices is complete and by \cite[Proposition 6.6]{BMS22} there exists a cut vertex $v$ of $\B$ such that $J_{\B \setminus \{v\}}$ is unmixed.

In what follows we find several conditions on $\B$ that guarantee the existence of a cut vertex $v$ for which $J_{\B \setminus \{v\}}$ is unmixed, which we later use to prove the equivalence of the conditions in \eqref{Conditions} when $k \geq n-2$ and for every graph with at most $12$ vertices. These results will also play an important role in the next section.

\begin{proposition}\label{P.blockWithFreeVertex}
Let $\B$ be accessible. If $B$ has a free vertex $w$, then there exists a cut vertex $v$ of $\B$ such that $J_{\B \setminus \{v\}}$ is unmixed.
\end{proposition}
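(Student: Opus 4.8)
The plan is to convert the statement, through Proposition~\ref{P.good_cut_vertex}, into a claim about cut sets, and then to exploit two features of a free vertex $w$ of $B$: its closed neighbourhood $N_B[w]$ is a clique $C$, and, unless $B$ is complete, $N_B(w)=C\setminus\{w\}$ is a vertex cut of $B$.

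\emph{Reduction.} Since $\B$ is accessible, $J_\B$ is unmixed and, as $B$ is a block, the cut vertices of $\B$ are exactly $v_1,\dots,v_k$; we may assume $k\ge 3$ by \cite[Proposition 6.6]{BMS22}, so $n\ge k\ge 3$ and $B$ is $2$-connected. For a cut vertex $v_i$ the connected components of $\B\setminus\{v_i\}$ are the isolated leaf $\{f_i\}$ and the connected graph $\B\setminus\{v_i,f_i\}$, and an isolated vertex lies in no cut set; hence Proposition~\ref{P.good_cut_vertex} shows that $J_{\B\setminus\{v_i\}}$ is unmixed if and only if no cut set of $\B\setminus\{v_i\}$ contains $N_B(v_i)$. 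So it suffices to find a cut vertex $v$ of $\B$ such that $N_B(v)$ is contained in no cut set of $\B\setminus\{v\}$.

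\emph{Using the free vertex.} If $B=K_n$ I would take $v=v_1$: a short count shows every cut set of $\B\setminus\{v_1\}$ is contained in the set of remaining cut vertices of $\B$, and the only such set that could contain $N_B(v_1)=V(B)\setminus\{v_1\}$ is $V(B)\setminus\{v_1\}$, which is not a cut set. Assume now $B\ne K_n$, so $N_B(w)=C\setminus\{w\}$ is a vertex cut of $B$. The basic mechanism is this: if $v\in C$ is a cut vertex and $S$ is a cut set of $\B\setminus\{v\}$ with $N_B(v)\subseteq S$, then each $u\in C\setminus\{v\}\subseteq N_B(v)$ lies in $S$ and, because $C$ is a clique, all its neighbours inside $C$ are in $S\setminus\{u\}$ or equal to $v$; hence in $(\B\setminus\{v\})\setminus(S\setminus\{u\})$ the only surviving neighbours of $u$ are the $B$-neighbours of $u$ outside $C$, together with $f_u$ if $u\in W$. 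If some such $u$ has none of these, then $u$ is not essential in $S$, contradicting that $S$ is a cut set. This immediately produces the desired $v$ in all but one configuration: (i) if $w\notin W$, take $v$ a cut vertex of $C$ and $u=w$; (ii) if $w\in W$ and $C$ contains another whiskered vertex, take it as $v$ and $u=w$ (whose surviving neighbour $f_w$ is isolated in $(\B\setminus\{v\})\setminus S$); (iii) if $w\in W$, $C\cap W=\{w\}$, and some vertex of $C\setminus\{w\}$ is free with clique $C$, take $v=w$ and $u$ that vertex. That a cut vertex of $C$ exists in case (i)---equivalently $C\cap W\ne\emptyset$---is itself a consequence of accessibility.

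\emph{The hard case and the obstacle.} The remaining configuration is $B\ne K_n$, $w\in W$, $C\cap W=\{w\}$, and every vertex of $C\setminus\{w\}$ having a neighbour outside $C$. Here I would again take $v=w$ and argue by contradiction from a cut set $S$ of $\B\setminus\{w\}$ with $C\setminus\{w\}\subseteq S$. First one upgrades $S$ to a cut set of $\B$: in $\B\setminus S$ the edge $\{w,f_w\}$ is a whole component, so deleting $w$ never affects component counts; the cut-set condition then transfers from $\B\setminus\{w\}$ at each $s\notin N_B(w)$, while for $s\in N_B(w)$ a short computation shows that deleting $s$ strictly lowers $c_\B$, so $S$ is a cut set of $\B$. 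Next, run accessibility on $\B$, descending along cut sets $S=S_t\supsetneq\cdots\supsetneq S_0=\emptyset$, and look at the first step where an element $a\in N_B(w)$ is dropped; Remark~\ref{R.union} makes $S_j\cup\{w\}$ a cut set of $\B$ containing the sub-clique $C\setminus\{a\}$, and one should reach a contradiction by playing the clique relations in $C$ and the vertex-cut property of $C\setminus\{w\}$ against the unmixedness identity $c_\B(T)=|T|+1$ valid for cut sets $T$. I expect this last step to be the main obstacle: everything else is component-count bookkeeping, whereas here the accessibility chain and the rigid clique/cut geometry around $w$ must be combined with care. As all notions involved are combinatorial, the result does not depend on the field.
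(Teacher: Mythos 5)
Your cases (i) and (ii) are correct and are essentially the paper's proof: one chooses a cut vertex $v$ of $\B$ adjacent to the free vertex $w$, notes that $w\in N_B(v)\subseteq S$ for any offending cut set $S$ of $\B\setminus\{v\}$ supplied by Proposition~\ref{P.good_cut_vertex}, and uses that $N_B[w]$ is a clique to conclude that all $B$-neighbours of $w$ other than $v$ already lie in $S$, so that $w$ is inessential in $S$ (it reconnects nothing, or only its own leaf). The reduction via Proposition~\ref{P.good_cut_vertex}, the reduction to $k\ge 3$, and the disposal of $B=K_n$ are also fine.

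The genuine gap is the ``hard case'', which you leave unfinished with only a programme (upgrade $S$ to a cut set of $\B$, descend along an accessibility chain, exploit the clique/cut geometry) and which you yourself flag as the main obstacle. That case never needs to be confronted: it presupposes $w\in W$ and $C\cap W=\{w\}$, i.e.\ that the cut vertex $w$ has no cut vertex among its neighbours. This is impossible in an accessible block with whiskers with $k\ge 3$: by \cite[Theorem 4.12]{BMS22} every vertex of $\B$ is adjacent to a cut vertex (and for $w$ itself a cut vertex this also follows from \cite[Proposition 4.10]{BMS22}, since the subgraph induced on the $k\ge 2$ cut vertices is connected). This is exactly the fact you invoke to find a cut vertex in $C$ in case (i); applied when $w\in W$ it shows that $C$ always contains a second whiskered vertex, so case (ii) applies and cases (iii) and the ``hard case'' are vacuous. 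With that observation your argument closes and coincides with the paper's, which simply picks $v$ to be a cut vertex adjacent to $w$ from the start and then splits on whether $w$ is a cut vertex of $\B$.
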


\begin{proof}
By \cite[Theorem 4.12]{BMS22} $w$ is adjacent to some cut vertex $v$ of $\B$ (since $k \geq 3$). By contradiction $J_{\B \setminus \{v\}}$ is not unmixed. Since $J_{\B}$ is unmixed, by Proposition \ref{P.good_cut_vertex} there exists a cut set $S$ of $\B \setminus \{v\}$ containing $N_B(v)$.

If $w$ is not a cut vertex of $\B$, then it is free also in $\B \setminus \{v\}$. Since $w \in S$ and cut sets only contain non-free vertices by \cite[Proposition 2.1]{RR14}, we have that $S \notin \CC(\B \setminus \{v\})$, a contradiction.

If $w$ is a cut vertex of $\B$, since it is free in $B$, then $\{w\} \cup N_B(w)$ is a clique. It follows that $N_B(w)\setminus \{v\} \subseteq N_B(v) \subseteq S$. Thus, $c_{\B\setminus \{v\}}(S)=c_{\B\setminus \{v\}}(S\setminus \{w\})$ and this yields a contradiction because $S$ is a cut set of $\B\setminus \{v\}$.
\end{proof}

\begin{lemma}\label{L.NeighboursOnlyCutVertices}
Assume that $J_{\B}$ is unmixed and let $v$ be a cut vertex of $\B$. If $N_B(v)$ contains only cut vertices of $\B$, then $J_{\B \setminus \{v\}}$ is unmixed.
\end{lemma}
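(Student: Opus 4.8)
The plan is to apply Proposition \ref{P.good_cut_vertex}. Write $f$ for the leaf adjacent to $v$. Since $B$ is a block, the connected components of $\B\setminus\{v\}$ are $H_1=\{f\}$ and $H_2$, where $H_2$ is the block with whiskers obtained from $B\setminus\{v\}$ by keeping all the remaining whiskers; hence $N_{H_1}(v)=\{f\}$ and $N_{H_2}(v)=N_B(v)$. The vertex $f$ is isolated in $\B\setminus\{v\}$, so it lies in no cut set of $\B\setminus\{v\}$ (re‑adding it would increase the number of components), and the condition on $H_1$ in Proposition \ref{P.good_cut_vertex} is automatic. Thus it suffices to prove that no cut set $S$ of $\B\setminus\{v\}$ satisfies $N_B(v)\subseteq S$.

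The key point is that the hypotheses force $\B$ to be a complete graph with a whisker at every vertex. First I would check that $N_B(v)$ is itself a cut set of $\B$: it is non‑empty since $v$ is a cut vertex, and for $w\in N_B(v)$ the vertex $w$ is a cut vertex of $\B$ by assumption, hence carries a whisker leaf $f_w$; in $\B\setminus N_B(v)$ both $\{v,f\}$ and $\{f_w\}$ are connected components, and re‑adding $w$ joins these two components, so $c_{\B}(N_B(v)\setminus\{w\})<c_{\B}(N_B(v))$. By Lemma \ref{L.CutsetsInUnmixedGraphs} applied to $\B$ this gives $c_{\B}(N_B(v))=|N_B(v)|+1$. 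On the other hand, the components of $\B\setminus N_B(v)$ are exactly $\{v,f\}$, the $|N_B(v)|$ isolated leaves $\{f_w\}$ with $w\in N_B(v)$, and one component for each connected component of $B\setminus N_B[v]$ (together with the whiskers attached to it); comparing the two counts forces $c_B(N_B[v])=0$, i.e. $N_B[v]=V(B)$. So $v$ is adjacent in $B$ to every other vertex of $B$, and since all these neighbors are cut vertices of $\B$, every vertex of $B$ carries a whisker.

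Next I would upgrade this to $B=K_n$. If $B$ were not complete it would admit a minimal vertex separator $T$, and every vertex of $T$ would have a neighbor in at least two components of $B\setminus T$ (a standard property of minimal separators); since every vertex of $B$ is whiskered, the same re‑adding argument shows that $T$ is a cut set of $\B$ with $c_{\B}(T)=|T|+c_B(T)\geq |T|+2$, contradicting Lemma \ref{L.CutsetsInUnmixedGraphs}. Hence $B=K_n$ and $\B$ is $K_n$ with a whisker at each of its $n$ vertices. In this situation $H_2$ is $K_{n-1}$ on $V(B)\setminus\{v\}$ with a whisker at each vertex, and $N_B(v)=V(B)\setminus\{v\}$. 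If some cut set $S$ of $\B\setminus\{v\}$ contained $N_B(v)$, then for any $w\in V(B)\setminus\{v\}$ every neighbor of $w$ in $H_2$ other than $f_w$ would lie in $S$, so re‑adding $w$ to $(\B\setminus\{v\})\setminus S$ would only reattach the isolated leaf $f_w$ and leave the number of components unchanged, contradicting that $S$ is a cut set. Therefore no such $S$ exists, and Proposition \ref{P.good_cut_vertex} yields that $J_{\B\setminus\{v\}}$ is unmixed.

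The crux is the reduction in the second and third paragraphs: once one realizes that $N_B(v)$ must be a cut set, unmixedness collapses $\B$ to $K_n$ with all whiskers, after which the statement is essentially immediate. The routine points to be careful about are the bookkeeping of connected components when a single vertex is re‑added to $\B$ minus a set, and the (standard) fact that in a minimal vertex separator every vertex has neighbors in at least two of the resulting components.
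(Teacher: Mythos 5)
Your proof is correct, and its backbone is the same as the paper's: show that $N_B(v)$ is a cut set of $\B$ (each $w\in N_B(v)$ reconnects $\{v,f\}$ with the isolated leaf $f_w$), use unmixedness via Lemma \ref{L.CutsetsInUnmixedGraphs} to force $N_B[v]=V(B)$, and then rule out any cut set of $\B\setminus\{v\}$ containing $N_B(v)$ so that Proposition \ref{P.good_cut_vertex} applies. The one difference is your middle portion, where you upgrade the conclusion to ``$B$ is complete with a whisker at every vertex'' via a minimal-separator argument before finishing. That detour is sound (it essentially re-derives Remark \ref{R.k=n}), but it is not needed: once $N_B[v]=V(B)$, every vertex of $(\B\setminus\{v\})\setminus N_B(v)$ is an isolated leaf, so for any $S\supseteq N_B(v)$ and any $w\in N_B(v)$ all non-leaf neighbours of $w$ in $\B\setminus\{v\}$ already lie in $S$; re-adding $w$ can only reattach $f_w$ and never merges two components, exactly as in your final paragraph. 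In other words, your concluding argument works verbatim without knowing $B=K_n$, which is how the paper closes the proof in one line.
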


\begin{proof}
By assumption, $N_B(v)$ is a cut set of $\B$ because every vertex in $N_B(v)$ is adjacent to $v$ and to a leaf. Since $J_{\B}$ is unmixed, the connected components of $\B \setminus N_B(v)$ are $|N_B(v)|$ isolated vertices and the whisker containing $v$. This implies that $N_B[v]=V(B)$. Note that $N_B(v) \notin \CC(\B \setminus \{v\})$, and every vertex of $(\B \setminus \{v\})\setminus N_B(v)=\B \setminus V(B)$ is isolated. Thus there is no cut set of $\B \setminus \{v\}$ containing $N_B(v)$, and by Proposition \ref{P.good_cut_vertex} it follows that $J_{\B \setminus \{v\}}$ is unmixed.
\end{proof}

Given a graph $G$ and $S \subset V(G)$, when we say that a vertex $v \in S$ \textit{reconnects} some connected components $G_1, \dots, G_r$ of $G \setminus S$, we mean that if we add back $v$ to $G \setminus S$, together with all edges of $G$ incident in $v$, then $G_1, \dots, G_r$ are in the same connected component.

\begin{lemma}\label{L.vertex_deg_2}
Let $\B$ be accessible and assume that there is a cut vertex $v$ of $\B$ such that $|N_B(v)|=2$. Then there exists a cut vertex $v'$ of $\B$ for which $J_{\B\setminus \{v'\}}$ is unmixed.
\end{lemma}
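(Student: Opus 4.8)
Let me think about this carefully. We have $\B$ accessible, $B$ a block on $n$ vertices with $k \geq 3$ whiskers attached at $v_1,\dots,v_k$, and a cut vertex $v$ of $\B$ (so $v = v_i$ for some $i$) with $|N_B(v)| = 2$, say $N_B(v) = \{a,b\}$. Since $B$ is a block (2-connected), $v$ has exactly two neighbors $a,b$ in $B$; together with the whisker $f_i$, $v$ has degree 3 in $\B$.

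The plan is to try the cut vertex $v$ itself first, and if $J_{\B \setminus \{v\}}$ is not unmixed, extract structural information via Proposition \ref{P.good_cut_vertex} that pins down the neighbors $a,b$ enough to then find a different good cut vertex among them.

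First I would note: if $J_{\B \setminus \{v\}}$ is unmixed we are done (take $v' = v$). So assume it is not. By Proposition \ref{P.good_cut_vertex}, since $J_{\B}$ is unmixed, there is a cut set $S$ of $\B \setminus \{v\}$ with $N_B(v) = \{a,b\} \subseteq S$. In particular $a$ and $b$ are both non-free vertices of $\B$ (cut sets contain only non-free vertices, \cite[Proposition 2.1]{RR14}), hence each of $a,b$ is a cut vertex of $\B$ (in a block with whiskers, every non-free, non-leaf vertex that... wait — more precisely, a non-free vertex of $\B$ lying in $V(B)$: is it automatically a cut vertex? No. But a vertex of $S \in \CC(\B\setminus\{v\})$ must be non-free in $\B \setminus \{v\}$; combined with accessibility of $\B$ I would want to argue $a,b$ are cut vertices of $\B$, i.e., carry whiskers.) Actually the cleanest route: if, say, $a$ is not a cut vertex of $\B$, then $a$ is a non-leaf, non-whiskered vertex of $B$; I'd want to use accessibility of $\B$ and the existence of $S$ to derive a contradiction, or else handle that case by applying Proposition \ref{P.blockWithFreeVertex} / Lemma \ref{L.NeighboursOnlyCutVertices}. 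Let me instead try the main dichotomy on whether $\{a,b\} \subseteq W$.

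Case 1: both $a$ and $b$ are cut vertices of $\B$ (carry whiskers). I would consider $v' = a$. Suppose $J_{\B \setminus \{a\}}$ is not unmixed; then there's a cut set $S_a \supseteq N_B(a)$ of $\B \setminus \{a\}$. Now $v \in N_B(a)$, so $v \in S_a$, and $v$ is a cut vertex of $\B$ with the small neighborhood $\{a,b\}$; in $\B \setminus \{a\}$ the vertex $v$ has neighbors $\{b\} \cup \{f_i\}$ (just $b$ among $V(B)$), so removing $v$ from $\B \setminus \{a\}$ can only separate off... this is where the size-2 hypothesis bites: since $v \in S_a$ and $v$ only sees $b$ and its own leaf $f_i$ in $\B\setminus\{a\}$, the vertex $v$ contributes essentially nothing to disconnection, so $S_a \setminus \{v\}$ should still be a cut set of $\B \setminus \{a\}$ unless removing $v$ leaves its whisker $f_i$ as a separate component — but that needs $b \in S_a$ too. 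So either $S_a\setminus\{v\}$ contradicts minimality/accessibility, or $\{a,b,v\} \subseteq S_a$; in the latter subcase I'd pass to $v' = b$ and run the symmetric argument, and a counting/component bookkeeping argument (of the type in the proof of Theorem \ref{T.minus_v_unmixed}, using Lemma \ref{L.CutsetsInUnmixedGraphs} and unmixedness) should close the loop. The key combinatorial fact I expect to exploit repeatedly: for a cut vertex $x$ with a whisker, $N_B(x)$ being contained in a cut set forces, by unmixedness, that the components of $\B$ cut by that set are controlled, and small neighborhoods give few reconnection options.

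Case 2: at least one of $a,b$, say $b$, is not a cut vertex of $\B$. Then $b \in S$ is non-free in $\B$ but carries no whisker, so $b$ is a non-leaf vertex of $B$ with $|N_B(b)| \geq 2$, and I would argue via Remark \ref{R.union} and accessibility that this forces a descending chain in $\CC(\B\setminus\{v\})$ that must drop $a$ (the only other forced element near $v$), analyzing how $S$ can be pared down one vertex at a time — the obstruction being that neither $a$ nor $b$ can be removed keeps the component count up, but $v$'s degree-2-in-$B$ condition limits the configuration severely, likely yielding $N_B(a)\setminus\{v\}\subseteq S$ and hence letting us apply Lemma \ref{L.NeighboursOnlyCutVertices} or directly verify $J_{\B\setminus\{a\}}$ unmixed.

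\textbf{Main obstacle.} The hard part will be the careful component-counting in Case 1 when $\{a,b,v\}\subseteq S_a$ and again symmetrically for $b$: one must track which whiskers become isolated, how $v$ (with only $b$ left as a $B$-neighbor) reconnects things, and combine the constraints from the two cut sets $S_a, S_b$ together with unmixedness of $J_\B$ to reach a numerical contradiction of the form $|U| = c_\B(U) - 1$ failing, exactly as in the last displayed computation in the proof of Theorem \ref{T.minus_v_unmixed}. Setting up the right auxiliary sets $U_1, U_2$ (restrictions of $S_a$ and $S_b$ to the two sides of the relevant cut vertex) so that their union is a cut set violating unmixedness is the delicate bookkeeping step; the degree-two hypothesis $|N_B(v)| = 2$ is what makes only finitely many configurations possible and should be invoked to rule out all but the contradictory one.
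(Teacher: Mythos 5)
There is a genuine gap. Your Case 1 (both neighbours of $v$ are cut vertices of $\B$) is actually vacuous: by Lemma \ref{L.NeighboursOnlyCutVertices}, if $N_B(v)$ consists only of cut vertices then $J_{\B\setminus\{v\}}$ is already unmixed, contradicting your standing assumption. So the elaborate discussion of $S_a$, $S_b$ and the $U_1,U_2$-type bookkeeping there is effort spent on an empty case, and in any event that bookkeeping (borrowed from Theorem \ref{T.minus_v_unmixed}) does not transfer: in a block with whiskers, removing a cut vertex only splits off its leaf, so there is no two-sided decomposition on which to restrict the two cut sets. The whole content of the lemma sits in your Case 2 — exactly one neighbour $v_1$ of $v$ is a cut vertex and the other, $w$, is not — and there your plan reduces to ``a descending chain argument should close the loop,'' which is a hope rather than a proof.

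What is missing is the concrete construction that makes the contradiction work. The paper assumes both $J_{\B\setminus\{v\}}$ and $J_{\B\setminus\{v_1\}}$ fail to be unmixed, deduces $w\notin N_B(v_1)$ from \cite[Lemma 6.2(1)]{BMS22}, and then builds the explicit cut set $N'=\{u\in N \mid N_{\B}(u)\nsubseteq N\}\setminus\{v,v_1\}$ for $N=N_B[v_1]\cup\{w\}$, proving $N'=(N_B(v_1)\setminus\{v\})\cup\{w\}$. The degree-two hypothesis then forces the component of $\B\setminus N'$ containing $v$ to be exactly $\{v,v_1\}$ together with their two leaves, so by Remark \ref{R.union} both $N'\cup\{v\}$ and $N'\cup\{v_1\}$ are cut sets of $\B$. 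Accessibility applied to $N'$ produces a $u\in N'$ with $N'\setminus\{u\}\in\CC(\B)$, and unmixedness forces $u$ to touch exactly two components of $\B\setminus N'$; whichever of the two pieces of $N'$ the vertex $u$ lies in, it then fails to reconnect anything in $\B\setminus(N'\cup\{v_1\})$ (or $\B\setminus(N'\cup\{v\})$), contradicting that these are cut sets. None of these steps — the exclusion $w\notin N_B(v_1)$, the identification of $N'$, the determination of the small component, or the final reconnection argument — is present in your proposal, so as written it does not establish the lemma.
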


\begin{proof}
Let $N_B(v)=\{v_1,w\}$, where by \cite[Theorem 4.12]{BMS22} and Lemma \ref{L.NeighboursOnlyCutVertices} we may assume that $v_1$ is a cut vertex (since $k \geq 3$), while $w$ is not. Assume by contradiction that neither $J_{\B \setminus \{v\}}$ nor $J_{\B \setminus \{v_1\}}$ are unmixed. Hence, \cite[Lemma 6.2 (1)]{BMS22} says that $N_{B}(v) \nsubseteq N_{B}[v_1]$ and then $w \notin N_B(v_1)$.

Consider $N = N_B(v) \cup N_B(v_1) = N_B[v_1] \cup \{w\}$ and $N' = \{u \in N \mid N_{\B}(u) \nsubseteq N\} \setminus \{v,v_1\}$. By construction $N'$ is a cut set of $\B$ and we call $G_1, \dots, G_{|N'|+1}$ the connected components of $\B \setminus N'$, where $v \in G_1$. Clearly $N' \subseteq (N_B(v_1) \setminus \{v\}) \cup \{w\}$. We show the reverse inclusion.

By Proposition \ref{P.good_cut_vertex}, $N_B(v_1)$ is contained in a cut set of $\B\setminus\{v_1\}$ and, in particular, every vertex in $N_B(v_1)$ is adjacent to at least two vertices of $\B \setminus N_B[v_1]$. Since $N \setminus N_B[v_1]=\{w\}$, it follows that every vertex $u \in N_B(v_1)\setminus \{v\}$ is in $N'$. We claim that also $w \in N'$. Indeed, if $w \notin N'$, then $G_1 \setminus \{v\}$ has at least three connected components: the one containing $w$, the one containing $v_1$, and an isolated vertex. Therefore, $c_{\B}(N' \cup \{v\}) \geq |N'|+3 > |N' \cup \{v\}| + 1$, which contradicts Lemma \ref{L.CutsetsInUnmixedGraphs}. Hence, $N' = (N_B(v_1) \setminus \{v\}) \cup \{w\} \in \CC(\B)$.

Since $v \in G_1$, the vertices of $G_1$ are $v, v_1$ and the two leaves adjacent to them. Both $G_1\setminus \{v\}$ and $G_1\setminus \{v_1\}$ have two connected components, hence Remark \ref{R.union} implies that both $N' \cup \{v\}$ and $N' \cup \{v_1\}$ are cut sets of $\B$. Moreover, the accessibility of $\B$ implies that there exists $u\in N'$ such that $N' \setminus \{u\}$ is a cut set of $\B$. It follows from the unmixedness of $J_{\B}$ that $u$ is adjacent to exactly two connected components of $\B \setminus N'$.

If $u \in N_B(v_1) \setminus \{v\}$, then $u$ does not reconnect any component of $\B \setminus (N' \cup \{v_1\})$, and this contradicts the fact that $N' \cup \{v_1\} \in \CC(\B)$. The same argument works if $u=w$, replacing $v_1$ with $v$.
\end{proof}

The next result allows us to assume that every vertex of $B$ has at least degree three in $B$. This fact will be of great importance in the algorithm that we will develop in the next section.

\begin{proposition}\label{P.blocksWithDegree2verticesAreGood}
Let $\B$ be accessible. If there is a vertex of $B$ that is adjacent to at most two vertices of $B$, then there exists a cut vertex $v$ of $B$ such that $J_{\B\setminus \{v\}}$ is unmixed.
\end{proposition}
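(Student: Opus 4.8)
The plan is to pick a vertex $w\in V(B)$ adjacent to at most two vertices of $B$ and, splitting into cases according to $w$ and its neighbours, reduce each case to a situation already handled by Proposition \ref{P.blockWithFreeVertex} or Lemma \ref{L.vertex_deg_2}. As a preliminary remark, since $\B$ is accessible with $k\ge 3$ whiskers, the block $B$ has at least three vertices, hence $B$ is $2$-connected and $\deg_B(w)=2$; write $N_B(w)=\{a,b\}$. Throughout we use that $J_{\B}$ is unmixed, that the cut vertices of $\B$ are exactly the $k$ vertices carrying a whisker, and that cut sets never contain free vertices (\cite[Proposition 2.1]{RR14}).

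If $w$ is a cut vertex of $\B$, then $|N_B(w)|=2$ and Lemma \ref{L.vertex_deg_2} directly provides a cut vertex $v'$ of $\B$ with $J_{\B\setminus\{v'\}}$ unmixed. So assume $w$ is not a cut vertex of $\B$; then $N_{\B}(w)=\{a,b\}$. If $a$ and $b$ are adjacent in $B$, then $\{w,a,b\}$ is the unique maximal clique of $B$ containing $w$, so $w$ is a free vertex of $B$ and Proposition \ref{P.blockWithFreeVertex} applies. So assume in addition that $a\not\sim b$. If one of $a,b$, say $a$, is a cut vertex of $\B$, I claim that $J_{\B\setminus\{a\}}$ is unmixed: otherwise, by Proposition \ref{P.good_cut_vertex} there is a cut set $S$ of $\B\setminus\{a\}$ with $N_B(a)\subseteq S$, so $w\in S$; but in $\B\setminus\{a\}$ the vertex $w$ has $b$ as its only neighbour, hence $w$ is free in $\B\setminus\{a\}$, contradicting that $S$ contains no free vertex.

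There remains the case in which $w$ and both its neighbours $a,b$ are non-cut vertices of $\B$ and $a\not\sim b$. The first step here is structural: $\{a,b\}$ is not a cut set of $\B$, since otherwise accessibility would force $\{a\}$ or $\{b\}$ to be a cut set, i.e.\ $a$ or $b$ to be a cut vertex of $\B$. As removing $a$ and $b$ isolates $w$, we get $c_{\B}(\{a,b\})\ge 2$, and since $\{a,b\}$ is not a cut set Lemma \ref{L.CutsetsInUnmixedGraphs} forces $c_{\B}(\{a,b\})=2$; therefore $B\setminus\{a,b,w\}$ is connected and $\{a,b\}$ is a minimal separator of $w$ in $B$. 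Starting from this, I would then prove that at least one of the (at least three) cut vertices $v$ of $\B$ satisfies condition (2) of Proposition \ref{P.good_cut_vertex}: assuming the contrary, for every cut vertex $v$ choose a cut set $S_v$ of $\B\setminus\{v\}$ containing $N_B(v)$, and analyse how such $S_v$ must behave around the induced path $a\textrm{--}w\textrm{--}b$ --- in particular whether $w$, $a$, or $b$ belongs to $S_v$ --- to contradict the accessibility of $\B$, arguing in the spirit of the proof of Lemma \ref{L.vertex_deg_2}; if convenient, this last step can be routed through a smaller accessible block with whiskers obtained by suppressing the degree-two vertex $w$.

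The main obstacle is exactly this final case: once $w$ is not a cut vertex, neither neighbour of $w$ is a cut vertex, and $a\not\sim b$, all the rigid reductions used above become unavailable, and one has to control directly the interaction between the cut sets of the graphs $\B\setminus\{v\}$ and the ``thin'' part $a\textrm{--}w\textrm{--}b$ of the block --- this is the delicate point that makes Proposition \ref{P.blocksWithDegree2verticesAreGood} harder than Lemma \ref{L.vertex_deg_2}.
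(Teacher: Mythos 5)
Your first three cases are correct, and your third case is in fact the heart of the paper's own proof: once a cut vertex $v\in N_B(w)$ exists, any cut set $S$ of $\B\setminus\{v\}$ containing $N_B(v)$ would contain $w$, which has only one neighbour left in $\B\setminus\{v\}$ and hence cannot reconnect components, a contradiction. The genuine gap is your final case ($w$ not a cut vertex, $a\not\sim b$, and neither $a$ nor $b$ a cut vertex of $\B$), which you correctly identify as unproven and for which you only offer a plan. But that case is vacuous, and the tool that kills it is one you already invoke in your second paragraph: by \cite[Theorem 4.12]{BMS22} (used in exactly this way in the proofs of Proposition \ref{P.blockWithFreeVertex} and Lemma \ref{L.cvdeg1}), every vertex of an accessible graph $\B$ with $k\geq 3$ whiskers is adjacent to a cut vertex. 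Since $w$ carries no whisker and $N_{\B}(w)=\{a,b\}$, one of $a,b$ must be a cut vertex, so your third case already covers everything once $w$ itself is not a cut vertex. In particular the ``$a\sim b$ implies $w$ free'' detour through Proposition \ref{P.blockWithFreeVertex} is also unnecessary, though not wrong.

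So the correct assessment is: your argument, pruned of the empty branch, coincides with the paper's two-line proof (Lemma \ref{L.vertex_deg_2} when $w$ is a cut vertex; otherwise pick a cut vertex $v\in N_B(w)$ and use the degree-one obstruction). What remains missing in your write-up is only the observation that closes the last branch; as written, the ``main obstacle'' paragraph does not constitute a proof, and the structural facts you derive there (that $\{a,b\}$ is a minimal separator of $w$, etc.) are never needed.
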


\begin{proof}
We may assume that $|V(B)|>2$ and there are no vertices of degree one in a block with at least three vertices. Assume that $w \in V(B)$ has degree two in $B$. If $w$ is a cut vertex of $\B$, the claim follows from Lemma \ref{L.vertex_deg_2}. Suppose $w$ is not a cut vertex of $\B$; then $w$ is adjacent to a cut vertex $v$ of $\B$ by \cite[Theorem 4.12]{BMS22}. If $J_{\B\setminus \{v\}}$ is not unmixed, i.e., $N_B(v)$ is contained in a cut set $S$ of $\B\setminus \{v\}$ by Proposition \ref{P.good_cut_vertex}, we get a contradiction because $w \in S$ and it is adjacent to exactly one vertex of $\B \setminus \{v\}$ and $S$ would not be a cut set.
\end{proof}

We now prove two lemmas that will be very useful in the rest of the section.

\begin{lemma} \label{L.strictly}
Let $J_{\B}$ be unmixed and $v$ be a cut vertex of $\B$. Suppose that $J_{\B \setminus \{v\}}$ is not unmixed, i.e., there exists $S \in \CC(\B \setminus \{v\})$ such that $N_B(v) \subseteq S$. If $S \setminus \{s\} \in \CC(\B)$ for some $s \in V(\B)$, then $s \notin N_B(v)$. In particular, if $\B$ is accessible, then $N_B(v) \subsetneq S$.
\end{lemma}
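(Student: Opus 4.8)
The plan is to route the argument through the observation that, under the stated hypotheses, $S$ is automatically a cut set of $\B$ itself, and then to conclude with a short count of connected components. Since $v$ is a cut vertex of the block with whiskers $\B$, it carries a whisker, with leaf $f_v$ say, and $\B\setminus\{v\}$ is the disjoint union of the isolated vertex $f_v$ and the connected graph $H:=\B\setminus\{v,f_v\}$, with $N_H(v)=N_B(v)$. As $f_v$ is an isolated vertex of $\B\setminus\{v\}$, it lies in no cut set, so $S\subseteq V(H)$, $v\notin S$, $f_v\notin S$, and $S$ is a cut set of $\B\setminus\{v\}$ if and only if it is a cut set of $H$.

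The first step I would carry out is to show that \emph{any} cut set $S$ of $\B\setminus\{v\}$ with $N_B(v)\subseteq S$ is a cut set of $\B$ (this step needs no unmixedness). Fix $u\in S$ and compare the numbers of connected components obtained by deleting $S$, respectively $S\setminus\{u\}$, from $\B$ and from $\B\setminus\{v\}$. Because $N_B(v)\subseteq S$, in $\B\setminus S$ the vertex $v$ is adjacent only to $f_v$, so reinserting $v$ into $(\B\setminus\{v\})\setminus S$ merely enlarges the singleton component $\{f_v\}$ to $\{v,f_v\}$, whence $c_{\B}(S)=c_{\B\setminus\{v\}}(S)$. If $u\notin N_B(v)$, the same reasoning applies after deleting $S\setminus\{u\}$, and the cut-set inequality $c_{\B\setminus\{v\}}(S)>c_{\B\setminus\{v\}}(S\setminus\{u\})$ transfers verbatim. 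If $u\in N_B(v)$, then after deleting $S\setminus\{u\}$ the vertex $v$ is adjacent to $f_v$ and to $u$, so reinserting $v$ now merges the component of $u$ with $\{f_v\}$, giving $c_{\B}(S\setminus\{u\})=c_{\B\setminus\{v\}}(S\setminus\{u\})-1$; the strict inequality for $\B\setminus\{v\}$ then still yields $c_{\B}(S)>c_{\B}(S\setminus\{u\})$. Hence $S\in\CC(\B)$.

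For the first assertion I argue by contradiction: suppose $s\in N_B(v)$ and $T:=S\setminus\{s\}\in\CC(\B)$; then $s\in S$. Since $J_{\B}$ is unmixed, the first step together with Lemma~\ref{L.CutsetsInUnmixedGraphs} yields $c_{\B}(S)=|S|+1$ and $c_{\B}(T)=|T|+1=|S|$, so $c_{\B}(S)=c_{\B}(T)+1$. On the other hand, reinserting $v$ as above gives $c_{\B}(S)=c_{\B\setminus\{v\}}(S)=c_H(S)+1$ (the extra component being $\{v,f_v\}$), while, because $N_B(v)\setminus T=\{s\}$, reinserting $v$ into $(\B\setminus\{v\})\setminus T$ merges $\{f_v\}$ into the component of $s$, so that $c_{\B}(T)=c_H(T)$. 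Since $S$ is a cut set of $H$ and $s\in S$, we have $c_H(S)>c_H(T)$, whence $c_{\B}(S)=c_H(S)+1\ge c_H(T)+2=c_{\B}(T)+2$, contradicting $c_{\B}(S)=c_{\B}(T)+1$. Therefore $s\notin N_B(v)$.

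The ``in particular'' then follows at once: if $\B$ is accessible, the first step gives $S\in\CC(\B)$, and $S\supseteq N_B(v)\neq\emptyset$, so accessibility provides $s\in S$ with $S\setminus\{s\}\in\CC(\B)$; by what was just proved $s\notin N_B(v)$, and hence $N_B(v)\subsetneq S$. I expect the main obstacle to be the first step, namely keeping careful track of how the number of connected components changes when $v$ is reinserted into $(\B\setminus\{v\})\setminus X$ for $X=S$ and for $X=S\setminus\{u\}$; once this is settled, the remainder is a one-line computation. Note that the whole argument uses only the unmixedness of $J_{\B}$, not that of $J_{\B\setminus\{v\}}$.
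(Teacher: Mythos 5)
Your argument is correct and is, at its core, the same as the paper's: the decisive point in both is that a vertex $s\in N_B(v)\cap S$ reconnects in $\B\setminus S$ not only the ($\geq 2$) components it reconnects in $(\B\setminus\{v\})\setminus S$ but also the whisker component $\{v,f_v\}$, giving $c_{\B}(S\setminus\{s\})\leq c_{\B}(S)-2$ and contradicting the unmixed count $c_{\B}(S\setminus\{s\})=|S|$, $c_{\B}(S)\leq|S|+1$. Your preliminary step that $S\in\CC(\B)$ is sound (it is essentially \cite[Remark 5.4]{BMS22}, invoked elsewhere in the paper), but the paper bypasses it and runs the component count directly.
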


\begin{proof}
Assume by contradiction that $s \in N_B(v)$. Since $S$ is a cut set of $\B \setminus \{v\}$, $s$ reconnects at least two connected components of $\B \setminus \{v\}$. Moreover, $s$ is also adjacent to $v$ and then in $\B \setminus S$ it reconnects at least three connected components; this is a contradiction because $|S|=c_{\B}(S\setminus \{s\})\leq c_{\B}(S)-2=|S|+1-2=|S|-1$.

In particular, if $\B$ is accessible, then there exists $s \in S$ such that $S \setminus \{s\}$ is a cut set of $\B$ and $s \in S \setminus N_B(v)$.
\end{proof}

In the next lemma we use an argument similar to the proof of \cite[Proposition 6.6]{BMS22}.

\begin{lemma} \label{L.cvdeg1}
Let $\B$ be accessible and suppose that there exists $w \in V(B)\setminus \{v_1, \dots, v_k\}$ that is adjacent to $v_j$ for exactly one $j \in \{1, \dots, k\}$. Assume also that this $v_j$ is not a cut vertex of the graph induced by $\B$ on $\{v_1, \dots, v_k\}$. Then $J_{\B \setminus \{v_j\}}$ is unmixed.
\end{lemma}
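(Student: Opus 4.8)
Here is how I would prove the lemma.

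The plan is to verify condition (2) of Proposition \ref{P.good_cut_vertex} for the cut vertex $v_j$ of $\B$. Since $B$ has at least three vertices (as $k \geq 3$), it is $2$-connected, so $B \setminus \{v_j\}$ is connected; hence $\B \setminus \{v_j\}$ has exactly two connected components, the isolated leaf $f_j$ and the rest. As $f_j$ is a free vertex of $\B \setminus \{v_j\}$, it lies in no cut set by \cite[Proposition 2.1]{RR14}, so the condition of Proposition \ref{P.good_cut_vertex} on the component $\{f_j\}$ is automatic; it therefore suffices to prove that no cut set of $\B \setminus \{v_j\}$ contains $N_B(v_j)$. Assume, for contradiction, that $S \in \CC(\B \setminus \{v_j\})$ and $N_B(v_j) \subseteq S$. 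Since $\B$ is accessible, Lemma \ref{L.strictly} gives $N_B(v_j) \subsetneq S$, and, more importantly, tells us that accessibility of $\B$ can only remove from $S$ vertices lying outside $N_B(v_j)$.

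The next step is a bookkeeping argument relating component counts in $\B$ and $\B \setminus \{v_j\}$. Because $N_B(v_j) \subseteq S$, in $\B \setminus S$ the vertex $v_j$ is adjacent only to $f_j$; hence adding $v_j$ back to $(\B\setminus\{v_j\}) \setminus S$, one of whose components is the isolated vertex $f_j$, merely absorbs $v_j$ into that component, so $c_\B(S) = c_\B(S \cup \{v_j\}) = c_{\B\setminus\{v_j\}}(S)$. If $S \cup \{v_j\}$ were a cut set of $\B$, the unmixedness of $J_\B$ would force $c_\B(S \cup \{v_j\}) = |S|+2$, hence $c_\B(S) = |S|+2$, and then $S$ itself would be a cut set of $\B$ by Lemma \ref{L.CutsetsInUnmixedGraphs} but with the wrong number of components, contradicting unmixedness. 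So $S \cup \{v_j\} \notin \CC(\B)$, and Lemma \ref{L.CutsetsInUnmixedGraphs} yields $c_\B(S) = c_\B(S \cup \{v_j\}) \leq |S|+1$. The same reasoning applies to any subset of $V(B) \setminus \{v_j\}$ containing $N_B(v_j)$.

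The vertex $w$ enters as follows. Since $w \in N_B(v_j) \subseteq S$ and $S \in \CC(\B \setminus \{v_j\})$, restoring $w$ reconnects at least two components $C_1, C_2$ of $(\B \setminus \{v_j\}) \setminus S$. By hypothesis the only cut vertex of $\B$ adjacent to $w$ is $v_j$, so every other neighbour of $w$ is a non-cut vertex of $B$; in particular $w$ is not adjacent to $f_j$, whence $C_1$ and $C_2$ are distinct from the component $\{f_j\}$ and are therefore also components of $\B \setminus S$, distinct from the component $\{v_j,f_j\}$. In $\B$, however, $w$ is also adjacent to $v_j \in \{v_j, f_j\}$, so restoring $w$ in $\B \setminus S$ merges the three distinct components $\{v_j,f_j\}$, $C_1$, $C_2$; thus $c_\B(S \setminus \{w\}) \leq c_\B(S) - 2 \leq |S|-1 = |S \setminus \{w\}|$, and Lemma \ref{L.CutsetsInUnmixedGraphs} shows $S \setminus \{w\} \notin \CC(\B)$.

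Finally, I would derive the contradiction by combining these facts with the accessibility of $\B$ and the hypothesis that $v_j$ is not a cut vertex of $\Gamma := \B[\{v_1,\dots,v_k\}]$, arguing in the spirit of \cite[Proposition 6.6]{BMS22}. One distinguishes the cases $c_\B(S) = |S|+1$ and $c_\B(S) \leq |S|$. In the first case $S \in \CC(\B)$, and one peels $S$ down via accessibility; by Lemma \ref{L.strictly} no vertex of $N_B(v_j)$ is ever removed, so the intermediate sets still contain $N_B(v_j)$ and, by the bookkeeping of the second paragraph together with the $\Gamma$-hypothesis, remain cut sets both of $\B$ and of $\B \setminus \{v_j\}$; but since $N_B(v_j) \neq \emptyset$, some peeling step must then remove a vertex of $N_B(v_j)$ from such a set, contradicting Lemma \ref{L.strictly}. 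In the second case one argues directly that $S$ cannot be a cut set of $\B \setminus \{v_j\}$, again using the $\Gamma$-hypothesis and the estimate of the third paragraph. I expect this last step to be the main obstacle: the delicate point is to keep exact track of which intermediate sets are cut sets of $\B$ and which of $\B \setminus \{v_j\}$, and to pin down precisely where the assumption that $v_j$ is not a cut vertex of $\Gamma$ is indispensable; the remaining arguments are the routine component arithmetic sketched above.
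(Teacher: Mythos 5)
Your proposal has a genuine gap: the contradiction is never actually derived. Paragraphs one to three only establish preliminary facts (that $S$ and its relatives have controlled component counts, and that $S\setminus\{w\}\notin\CC(\B)$), but none of these contradicts anything on its own: accessibility only guarantees that \emph{some} $u\in S$ has $S\setminus\{u\}\in\CC(\B)$, and Lemma \ref{L.strictly} already tells you that this $u$ lies outside $N_B(v_j)$, so showing that $w$ in particular cannot be removed proves nothing new. The peeling argument you sketch in the last paragraph does not close the gap as stated: to reapply Lemma \ref{L.strictly} at each step you would need every intermediate set to be a cut set of $\B\setminus\{v_j\}$ (not merely of $\B$), and this is exactly what fails in general --- a vertex of $N_B(v_j)$ whose only role in $\B\setminus T$ is to join the component $\{v_j,f_j\}$ to one other component stops being a reconnecting vertex once $v_j$ is deleted. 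Moreover, you never identify where the hypothesis that $v_j$ is not a cut vertex of the induced graph on $\{v_1,\dots,v_k\}$ enters, and you say so yourself; since the lemma is false without that hypothesis, an argument that does not use it cannot be completed.

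The paper's proof is structured quite differently and the missing hypothesis is precisely its engine. It sets $N=N_B(v_2)\cup\dots\cup N_B(v_k)$ (so that $w\in N_B(v_j)\setminus N$ by the ``exactly one $j$'' assumption), forms the cut set $N'=\{u\in N\mid N_B(u)\nsubseteq N\}$, and uses an ordered peeling of $S$ (via \cite[Corollary 4.15]{BMS22}, applied to $\B$, not to $\B\setminus\{v_j\}$, which sidesteps the obstacle above) only to manufacture $s+1$ distinct vertices of $N'$, where $s=|N_B(v_j)\setminus N|\geq 1$. The hypothesis on $v_j$ guarantees that $N\setminus N'$ lies in a single connected component of $\B\setminus N'$, and counting the components of $\B\setminus N'$ then violates unmixedness. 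You would need to supply an argument of this kind (or some other concrete mechanism that exploits the induced-subgraph hypothesis) for the proof to stand.
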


\begin{proof}
Without loss of generality, we may assume $j=1$. Let $N=N_B(v_2) \cup \dots \cup N_B(v_k)$, which is not empty because $k \geq 3$. Note that $w \in V(B) \setminus N$ and $N \cup N_B(v_1)=V(B)$ because every vertex of $\B$ is adjacent to a cut vertex by \cite[Theorem 4.12]{BMS22}. Consider $N'=\{u \in N \mid N_B(u) \nsubseteq N\}\subseteq N\setminus \{v_2, \dots, v_k\}$; it is easy to check that $N'$ is a cut set of $\B$.

Assume by contradiction that $J_{\B \setminus \{v_1\}}$ is not unmixed, i.e., by Proposition \ref{P.good_cut_vertex}, there exists $S \in \mathcal{C}(\B \setminus \{v_1\})$ such that $N_B(v_1) \subseteq S$.
Clearly, $S$ is also a cut set of $\B$ and by \cite[Corollary 4.15]{BMS22} we can order the elements of $S=\{z_1, \dots, z_t\}$ in such a way that $\{z_1, \dots, z_i\} \in \CC(\B)$ for every $i=1, \dots, t$.
Let $N_B(v_1) \setminus N=\{z_{i_1}, \dots, z_{i_s}\}$ with $i_1< \dots < i_s$; notice that $N_B(v_1) \setminus N \neq \emptyset$, because $w \in N_B(v_1) \setminus N$. Since $S \in \mathcal{C}(\B \setminus \{v_1\})$, $z_{i_s}$ is adjacent to at least two vertices $a_s, a_{s+1} \in V(B) \setminus (S \cup \{v_1\}) \subseteq V(B) \setminus N_B[v_1]\subseteq N \setminus \{v_1\}$, and both $a_s$ and $a_{s+1}$ are in $N'\setminus \{v_1\}$ because $z_{i_s}\notin N$.

Consider now $S'=\{z_1, \dots, z_{i_{s-1}}\} \in \mathcal{C}(\B)$. A connected component $C$ of $\B \setminus S'$ contains $z_{i_s}$, hence $N_{B}(z_{i_s}) \setminus S' \subseteq C$; in particular $v_1, a_s, a_{s+1} \in C$. Therefore, $z_{i_{s-1}}$ is adjacent to at least one vertex $a_{s-1} \in V(B) \setminus (N_B[v_1] \cup \{a_s,a_{s+1}\}) \subseteq N \setminus \{v_1,a_s, a_{s+1}\}$. Again, $a_{s-1} \in N' \setminus \{v_1,a_s,a_{s+1}\}$ because $z_{i_{s-1}} \notin N$. Repeating the same argument, we find $\{a_1, \dots, a_{s+1}\} \subseteq N'\setminus \{v_1\}$, where the $a_i$'s are pairwise distinct.

Moreover, $v_1 \in N'$ because it is adjacent to some $v_i$ and to $w \notin N$ by assumption. Recall also that $v_2, \dots, v_k \in N \setminus N'$.
Hence, $|N'| \geq s+2$ and the connected components of $\B \setminus N'$ are the following:
\begin{itemize}
\item the component containing $N\setminus N'$, which is connected because by assumption the graph induced by $\B$ on $\{v_{2}, \dots,v_k\}$ is connected since $v_1$ is not a cut vertex of the graph induced by $\B$ on $\{v_1, \dots,v_k\}$;
\item the isolated vertex that is adjacent to $v_1$ in $\B$;
\item the connected components of $N_B(v_1)\setminus N$, which are at most $s$ since $N_B(v_1)\setminus N$ has cardinality $s$.
\end{itemize}
Since $N'$ is a cut set of $\B$ and $J_{\B}$ is unmixed, we get $c_{\B}(N')\leq 1+1+s \leq |N'|=c_{\B}(N')-1$, which yields a contradiction.
\end{proof}

Most of the cases in the next proof follow from \cite[Proposition 6.6]{BMS22}. The only missing case has been independently obtained by Saha and Sengupta in \cite[Proposition 5.3]{SS22}; we include here a simpler and shorter proof.

\begin{proposition}\label{P.three_whiskers}
Assume that $\B$ is accessible and $k \leq 3$. Then there exists a cut vertex $v$ of $\B$ such that $J_{\B\setminus\{v\}}$ is unmixed.
\end{proposition}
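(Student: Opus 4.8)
The plan is to reduce, via the structure of the induced subgraph $\Gamma$ of $\B$ on its cut vertices, to a single genuinely new configuration. Since throughout this subsection $k\ge 3$ while here $k\le 3$, we have $k=3$ exactly, so $\Gamma$ is a graph on $\{v_1,v_2,v_3\}$. First I would invoke \cite[Theorem 4.12]{BMS22}: every vertex of $\B$ is adjacent to a cut vertex, so applying this to $v_1,v_2,v_3$ themselves shows $\Gamma$ has no isolated vertex; hence $\Gamma$ is either $K_3$ or a path $v_1-v_2-v_3$ with $\{v_1,v_3\}\notin E(B)$. If $\Gamma=K_3$ the induced subgraph on the cut vertices is complete, and \cite[Proposition 6.6]{BMS22} gives the desired cut vertex. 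So the work is entirely in the path case, which I treat by contradiction: suppose $J_{\B\setminus\{v_i\}}$ is not unmixed for each $i\in\{1,2,3\}$.

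Next I would record the normalizations coming from the earlier results of this subsection: by Proposition \ref{P.blockWithFreeVertex} we may assume $B$ has no free vertex; by Proposition \ref{P.blocksWithDegree2verticesAreGood} every vertex of $B$ has degree at least three in $B$; and by Lemma \ref{L.NeighboursOnlyCutVertices} each $N_B(v_i)$ contains a non-cut vertex of $\B$. Moreover, since $v_1$ and $v_3$ are \emph{not} cut vertices of $\Gamma$, Lemma \ref{L.cvdeg1} shows that no non-cut vertex of $\B$ has $v_1$ (resp.\ $v_3$) as its unique neighbour among $\{v_1,v_2,v_3\}$; combined with \cite[Theorem 4.12]{BMS22} this yields $V(B)=\{v_1,v_2,v_3\}\cup N_B(v_1)\cup N_B(v_2)\cup N_B(v_3)$ and lets me partition the non-cut vertices of $B$ according to which of $v_1,v_2,v_3$ they are adjacent to, the classes ``$\{v_1\}$'' and ``$\{v_3\}$'' being empty.

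For the core argument I would fix, using Proposition \ref{P.good_cut_vertex}, cut sets $S_i\in\CC(\B)$ with $v_i\notin S_i$ and $N_B(v_i)\subseteq S_i$ (recall a cut set of $\B\setminus\{v_i\}$ is again a cut set of $\B$). Since every $B$-neighbour of $v_i$ lies in $S_i$, the component of $\B\setminus S_i$ containing $v_i$ is the edge $\{v_i,f_i\}$, while $f_j$ is isolated in $\B\setminus S_i$ whenever $v_j\in S_i$. The contradiction should come from the counting used in Lemmas \ref{L.vertex_deg_2} and \ref{L.cvdeg1}: I would whittle $S_2$ down through an accessible filtration $z_1,\dots,z_t$ (\cite[Corollary 4.15]{BMS22}), noting $z_1$ is a cut vertex of $\B$, hence $z_1\in\{v_1,v_3\}$ since $v_2\notin S_2$, track how the components $\{f_1\}$, $\{f_3\}$, $\{v_2,f_2\}$ and the pieces on the remaining non-cut vertices merge, and compare the total with the identity $c_{\B}(S)=|S|+1$ of Lemma \ref{L.CutsetsInUnmixedGraphs}. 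When some non-cut vertex has unique cut-neighbour $v_2$, or when no non-cut vertex is adjacent to both $v_1$ and $v_3$, this count should force $J_{\B\setminus\{v_2\}}$ to be unmixed outright; in the remaining configuration I would instead combine $S_1$ and $S_3$ — both of which contain $v_2$ together with every vertex adjacent to both $v_1$ and $v_3$ — in the style of the proof of Theorem \ref{T.minus_v_unmixed}, assembling a cut set whose cardinality and component count disagree.

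The main obstacle will be exactly that last configuration of the path case: when there are non-cut vertices adjacent to both $v_1$ and $v_3$ but not to $v_2$, Lemma \ref{L.cvdeg1} is silent and $v_2$ need not visibly work, so one has to extract the contradiction from a careful, case-by-case analysis of which vertices reconnect which components of $\B\setminus S_i$. Everything preceding it is a short reduction; the point of the ``simpler and shorter proof'' is to route around the heavier analysis of \cite[Proposition 5.3]{SS22} by exploiting Lemmas \ref{L.strictly} and \ref{L.cvdeg1}.
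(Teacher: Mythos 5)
Your reduction is the same as the paper's: since $k=3$, the induced graph on the cut vertices is either complete (handled by \cite[Proposition 6.6]{BMS22}) or the path $v_1-v_2-v_3$, and in the path case Lemma \ref{L.cvdeg1} forces every non-cut vertex outside $N_B[v_2]$ to be adjacent to both $v_1$ and $v_3$. Up to that point your plan matches the actual proof. But the decisive step is missing. You correctly identify the hard configuration (non-cut vertices adjacent to $v_1$ and $v_3$ but not to $v_2$) and then defer it to ``a careful, case-by-case analysis'' or to combining $S_1$ and $S_3$ ``in the style of Theorem \ref{T.minus_v_unmixed}.'' That latter route is doubtful: the union argument in Theorem \ref{T.minus_v_unmixed} works because $U_1$ and $U_2$ sit on opposite sides of a cut vertex of $H$, which makes $c_H(U_1\cup U_2)-1$ split as $(c_H(U_1)-1)+(c_H(U_2)-1)$; here $S_1$ and $S_3$ both live inside the single block $B$, they overlap (both contain $v_2$ and the common neighbours of $v_1,v_3$), and $S_1\cup S_3$ need not be a cut set at all, so no analogous additivity of components is available.

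The idea you are missing is a direct cardinality comparison between $N_B(v_1)$ and $N_B(v_2)$, needing only the failure of $v_2$ and then of $v_1$ (the vertex $v_3$ and the set $S_3$ play no role). Set $r=|N_B(v_2)|$ and let $S\in\CC(\B\setminus\{v_2\})$ contain $N_B(v_2)$; Lemma \ref{L.strictly} gives $|S|\ge r+1$. Counting the components of $\B\setminus S$ (the isolated $f_1$, $f_3$, the edge $\{v_2,f_2\}$, and $|S|-2$ components inside $V(B)\setminus(S\cup\{v_2\})$) shows $|V(B)\setminus N_B[v_2]|\ge |S|-1\ge r$, and since all these vertices lie in $N_B(v_1)$ (by your own Lemma \ref{L.cvdeg1} reduction) and $v_2\in N_B(v_1)$ as well, one gets $|N_B(v_1)|\ge r+1$. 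Now if $v_1$ also fails, $N_B(v_1)$ is a cut set of $\B$ by \cite[Remark 5.4]{BMS22}, and counting the components of $\B\setminus N_B(v_1)$ forces $|V(B)\setminus N_B[v_1]|\ge r$; but $V(B)\setminus N_B[v_1]\subseteq N_B(v_2)\setminus\{v_1\}$, which has at most $r-1$ elements. This is the contradiction, and it is exactly the ``simpler and shorter'' mechanism the paper advertises; without it (or a worked-out substitute for your final configuration) the proof is incomplete.
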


\begin{proof}
We recall that the subgraph $H$ induced by $\B$ on the cut vertices of $\B$ is connected by \cite[Proposition 4.10]{BMS22}.
By \cite[Proposition 6.6]{BMS22}, it follows that the claim holds if $H$ is complete. Therefore, we only need to consider the case in which $V(H)=\{v_1,v_2,v_3\}$ and, without loss of generality, $E(H)=\{\{v_1,v_2\},\{v_2,v_3\}\}$.

If $J_{\B \setminus \{v_2\}}$ is unmixed, we have nothing to prove; if this is not the case, by Proposition \ref{P.good_cut_vertex} there exists $S \in \mathcal{C}(\B \setminus \{v_2\})$ such that $N_B(v_2) \subseteq S$.  If $r=|N_B(v_2)|$, then $|S| \geq r+1$ by Lemma \ref{L.strictly}. Recall that $f_1, f_2, f_3$ are the leaves attached to $v_1, v_2, v_3$ respectively. The connected components of $\B \setminus S$ are the isolated vertices $f_1$ and $f_3$, the edge $\{v_2, f_2\}$ and other $|S|-2$ connected components whose vertices are included in $V(B) \setminus (S \cup \{v_2\})$. This implies that $|V(B) \setminus (S \cup \{v_2\})| \geq |S|-2$ and, since $N_B(v_2) \subsetneq S$, there are at least $|S|-1$ vertices in $V(B) \setminus N_B[v_2]$. By Lemma \ref{L.cvdeg1}, we may assume that $V(B) \setminus N_{B}[v_2] \subseteq N_B(v_1) \cap N_B(v_2)$. In particular, since also $v_2 \in N_B(v_1)$, we have that $|N_B(v_1)| \geq |S| \geq r+1$.

Assume by contradiction that $N_B(v_1)$ is contained in a cut set of $\B\setminus \{v_1\}$; in particular, by \cite[Remark 5.4]{BMS22} $N_B(v_1)$ is a cut set of $\B$. The connected components of $\B \setminus N_B(v_1)$ are the edge $\{v_1,f_1\}$, the isolated vertex $f_2$ and other $|N_B(v_1)|-1 \geq r$ connected components whose vertices (except $f_3$ which is adjacent to $v_3$) are included in $V(B) \setminus N_B[v_1]$; thus $|V(B) \setminus N_B[v_1]|  \geq r$. However, $V(B) \setminus N_B[v_1] \subseteq N_B(v_2) \setminus \{v_1\}$ and hence $r \leq |V(B) \setminus N_B[v_1]| \leq |N_B(v_2) \setminus \{v_1\}| \leq r-1$ yields a contradiction.
\end{proof}

Before treating blocks with four whiskers we prove the following result that is interesting by itself.

\begin{proposition} \label{P.BoundNeighbourhood}
Assume that $\B$ is accessible and let $H$ be the graph induced by $\B$ on $\{v_1,\ldots,v_k\}$. Assume that there are no cut vertices $w$ of $\B$ such that $J_{\B \setminus \{w\}}$ is unmixed. If $v$ is a cut vertex of $\B$ and $r=|N_H[v]|$, then $|N_B(v)| \leq \lfloor \frac{n+r}{2} \rfloor - 2$. In particular, $|N_B(v)| \leq \lfloor \frac{n+k}{2} \rfloor - 2$.
\end{proposition}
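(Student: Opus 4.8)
The plan is to fix the cut vertex $v$, extract from the hypothesis a cut set $S$ of $\B\setminus\{v\}$ containing $N_B(v)$, and bound $|S|$ — hence $|N_B(v)|$ — by counting the connected components of $\B\setminus S$.

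First I would invoke the hypothesis at $v$: since $J_{\B\setminus\{v\}}$ is not unmixed, Proposition~\ref{P.good_cut_vertex} yields $S\in\CC(\B\setminus\{v\})$ with $N_B(v)\subseteq S$, and accessibility of $\B$ together with Lemma~\ref{L.strictly} gives the strict inclusion $N_B(v)\subsetneq S$. Two clean-up observations: no leaf of $\B$ lies in $S$ — the leaf at $v$ is isolated in $\B\setminus\{v\}$ and the other leaves are free, so none of them can belong to a cut set of $\B\setminus\{v\}$ by \cite[Proposition~2.1]{RR14}, whence $S\subseteq V(B)\setminus\{v\}$ — and $S$ is also a cut set of $\B$ (a direct check, as in the proof of Lemma~\ref{L.cvdeg1}), so unmixedness of $J_\B$ gives $c_\B(S)=|S|+1$.

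Next I would count the components of $\B\setminus S$. Writing $f$ for the leaf at $v$ and $p=|S\cap\{v_1,\dots,v_k\}|$, these are: the component $\{v,f\}$; the $p$ isolated leaves attached to the cut vertices in $S$; and $q$ further components, each of which contains at least one vertex of $V(B)\setminus(S\cup\{v\})$ (a leaf attached to a cut vertex outside $S$ lies in the same component as that cut vertex). Hence $1+p+q=|S|+1$ and $q\le n-|S|-1$, so $2|S|\le n+p-1$. Since $|N_B(v)\cap\{v_1,\dots,v_k\}|=|N_H(v)|=r-1$ and $N_B(v)\setminus\{v_1,\dots,v_k\}\subseteq S\setminus\{v_1,\dots,v_k\}$, we get
\[
|N_B(v)|\le(r-1)+(|S|-p)\le(r-1)+(n-|S|-1)\le(r-1)+(n-|N_B(v)|-2),
\]
that is, $2|N_B(v)|\le n+r-3$. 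When $n+r$ is even this already yields $|N_B(v)|\le\frac{n+r-4}{2}=\lfloor\frac{n+r}{2}\rfloor-2$, and the ``in particular'' follows since $r=|N_H[v]|\le k$ and the floor function is monotone.

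The remaining case — $n+r$ odd with $|N_B(v)|=\frac{n+r-3}{2}$ — is, I expect, the main obstacle. Here all the inequalities above are equalities, which pins down the local structure: $S=N_B(v)\cup\{x\}$ for a single cut vertex $x\notin N_B[v]$; every component of $\B\setminus S$ other than $\{v,f\}$ and the isolated leaves is a single vertex of $B$; and, because $S\in\CC(\B)$, $\B$ is accessible, and Lemma~\ref{L.strictly} forbids removing any element of $N_B(v)$, one gets $N_B(v)=S\setminus\{x\}\in\CC(\B)$. Comparing $c_\B(N_B(v))=|N_B(v)|+1$ with $c_\B(S)=|S|+1$ shows that $x$ meets exactly two components of $\B\setminus S$, namely its own leaf and one single-vertex component $\{b\}$, so $N_B(x)\subseteq N_B(v)\cup\{b\}$; running the same analysis on $\B\setminus N_B(v)$ gives $N_B(b)\subseteq N_B(v)\cup\{x\}$ and $N_B(b')\subseteq N_B(v)$ for each remaining single-vertex component $\{b'\}$. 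Since the standing hypothesis and Proposition~\ref{P.blocksWithDegree2verticesAreGood} force every vertex of $B$ to have degree at least three in $B$, the vertices $x$, $b$, $b'$ become very constrained. The plan for this case is to convert the rigidity into a contradiction with the standing hypothesis by producing a cut vertex $w$ of $\B$ with $J_{\B\setminus\{w\}}$ unmixed — natural candidates are $x$ itself, or the element of $N_B(v)$ witnessing the accessibility of $N_B(v)$ (whose neighbourhood the component count above pins down) — verified through Proposition~\ref{P.good_cut_vertex} and, where needed, Lemma~\ref{L.cvdeg1} or Lemma~\ref{L.NeighboursOnlyCutVertices}. Making this final step go through cleanly is the delicate part.
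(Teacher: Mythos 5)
Your counting argument is set up correctly and matches the paper's strategy, but it stops one short of the claimed bound: from $c_{\B}(S)=|S|+1$, the component count, and the strict inclusion $N_B(v)\subsetneq S$ you only get $2|N_B(v)|\le n+r-3$, which yields $\lfloor\frac{n+r}{2}\rfloor-2$ when $n+r$ is even but only $\lfloor\frac{n+r}{2}\rfloor-1$ when $n+r$ is odd. You acknowledge this and correctly pin down the extremal structure ($S=N_B(v)\cup\{x\}$ for a single cut vertex $x\notin N_B[v]$, with every non-leaf component of $\B\setminus S$ a single vertex of $B$), but the proposed resolution --- hunting for a cut vertex $w$ with $J_{\B\setminus\{w\}}$ unmixed via Lemma \ref{L.cvdeg1} or Lemma \ref{L.NeighboursOnlyCutVertices} --- is left as a plan, not an argument. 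As written, the proposition is not proved.

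The paper closes exactly this gap with a cleaner dichotomy, applied before any parity discussion. Let $i$ be the number of cut vertices in $S\setminus N_B(v)$ (necessarily not adjacent to $v$). Either $S=N_B(v)\cup\{v_1,\dots,v_i\}$ consists of $N_B(v)$ plus cut vertices only --- in which case each $v_j$ reconnects only its own leaf to one other component, so peeling them off shows that $N_B(v)$ itself is a cut set of $\B\setminus\{v\}$, which contradicts Lemma \ref{L.strictly} directly (accessibility forces $N_B(v)\subsetneq S'$ for every cut set $S'$ of $\B\setminus\{v\}$ containing $N_B(v)$, so $N_B(v)$ cannot be one); or else $S$ contains a non-cut vertex outside $N_B(v)$, whence $|S|\ge|N_B(v)|+i+1$, and plugging this strengthened inequality into the same vertex count gives $2|N_B(v)|\le n+r-i-4\le n+r-4$, i.e., the bound with no parity caveat. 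In your extremal case this is precisely the first branch with $i=1$: once you observed $N_B(v)=S\setminus\{x\}\in\CC(\B)$ and that $x$ meets only its own leaf and one further component, the contradiction was already at hand via Lemma \ref{L.strictly} --- no search for another cut vertex $w$ is needed.
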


\begin{proof}
By Proposition \ref{P.good_cut_vertex}, there exists $S \in \mathcal{C}(\B \setminus \{v\})$ containing $N_B(v)$. Assume by contradiction that $|N_B(v)| \geq \lfloor \frac{n+r}{2} \rfloor - 1$.
Since $S$ is also a cut set of $\B$ and $J_{\B}$ is unmixed, $c_{\B}(S) = |S|+1$. Let $v_1, \dots, v_{k-r}$ be the cut vertices of $\B$ different from $v$ that are not adjacent to $v$.
Assume that $v_1, \dots, v_i \in S$ and $v_{i+1}, \dots, v_{k-r} \notin S$ with $0 \leq i \leq k-r$.

Suppose first that $S=N_B(v) \cup \{v_1, \dots, v_i\}$. By \cite[Remark 5.4]{BMS22}, $N_B(v)$ is a cut set of $\B$. Since also $S \in \mathcal{C}(\B)$, $c_{\B}(N_B(v))=c_{\B}(S)-i$; more precisely, this means that every $v_j$ reconnects the leaf $f_j$ to only another component. This easily implies that also $N_B(v)$ is a cut set of $\B \setminus \{v\}$, but this contradicts Lemma \ref{L.strictly}.

Therefore, we may assume that $|S| \geq |N_{B}(v)|+i+1$. Note that in $\B \setminus S$ we have that $f_j$ is in the same connected component of $v_j$, for $i+1 \leq j \leq k-r$ and $f$ is in the same connected component of $v$, where $f$ is the leaf attached to $v$. Then, we have $|S| + c_{\B}(S) +(k-r-i)+1 \leq n+k$. Thus,
   \begin{align*}
    n+k &\geq |S| + c_{\B}(S)+ k-r-i + 1 = 2|S| +k-r-i+ 2 \geq 2(|N_B(v)|+i+1) + k-r-i+ 2 \geq \\
   & \geq 2 \left( \bigg\lfloor \frac{n+r}{2} \bigg\rfloor - 1 \right) +k-r+i+ 4  \geq (n+r-1)-2+k-r+i+4\geq n+k+1,
    \end{align*}
    which is a contradiction.
\end{proof}

\begin{remark}\label{R.boundsNumEdgesBlocks}
Let $\B$ be accessible and suppose that for every cut vertex $v$ of $\B$ the ideal $J_{\B \setminus \{v\}}$ is not unmixed. By Proposition \ref{P.blocksWithDegree2verticesAreGood}, for every vertex $v$ of $B$ we have $|N_B(v)| \geq 3$, and thus $|E(B)| \geq \frac{3n}{2}$. On the other hand, Proposition \ref{P.BoundNeighbourhood} implies that for all the cut vertices $v$ of $\B$ we have $|N_B(v)| \leq \lfloor \frac{n+k}{2} \rfloor - 2$. Let us consider the non-cut vertices of $\B$ that are not leaves.

Notice that at most one of these vertices can have degree $n-1 = |V(B)|-1$. In fact, if $w_1,w_2$ are two such vertices and they both have degree $n-1$, then $N_B(w_1) = V(B) \setminus \{w_1\}$ and $N_B(w_2) = V(B) \setminus \{w_2\}$. By Proposition \ref{P.good_cut_vertex} and Lemma \ref{L.strictly}, there exists $S \in \mathcal{C}(\B \setminus \{v\})$ such that $N_B(v) \subsetneq S$. In particular, $S$ is a cut set of $\B$ and $w_1,w_2 \in S$. Since $\B$ is accessible, there exists a cut set $T \subsetneq S$ such that $w_1 \notin T$ and $w_2 \in T$. However, $w_2$ does not reconnect anything in $\B \setminus T$ because $N_B[w_1] = V(B)$, against $T$ being a cut set of $\B$. It follows that
\[
\sum_{\stackrel{w \in V(B)}{w \text{ non-cut vertex of } \B}} |N_B(w)| \leq (n-1) + (n-2)(n-k-1).
\]

Putting everything together, we conclude that, if $\B$ is accessible and for every cut vertex $v$ of $\B$ the ideal $J_{\B \setminus \{v\}}$ is not unmixed, then:
\begin{equation}\label{eq.boundsNumEdgesBlock}
\frac{3n}{2} \leq |E(B)| \leq \frac{1}{2} \left( k \left( \left\lfloor \frac{n+k}{2} \right\rfloor - 2 \right) + (n-1) + (n-2)(n-k-1) \right)=\frac{(n-1)^2}{2}-\frac{k}{2} \left( n - \left\lfloor \frac{n+k}{2} \right\rfloor \right).
\end{equation}
\end{remark}

Since $n - \left\lfloor \frac{n+k}{2} \right\rfloor \geq 0$, the upper bound is non-trivial, i.e., the right-hand side of (\ref{eq.boundsNumEdgesBlock}) is strictly less than $\binom{n}{2}$.

\begin{lemma} \label{L.BoundEdgesH}
Let $\B$ be accessible and $H$ be the graph induced by $\B$ on its cut vertices. Assume that $H$ is a block such that $|E(H)| \geq \frac{1}{2}\sum_{i=1}^k \lfloor \frac{n+r_i}{2} \rfloor -n+1$, where $r_i:=|N_H[v_i]|$. Then there exists a cut vertex $v$ of $\B$ such that $J_{\B \setminus \{v\}}$ is unmixed.
\end{lemma}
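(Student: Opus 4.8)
The plan is to argue by contradiction: assume that for every cut vertex $v$ of $\B$ the ideal $J_{\B\setminus\{v\}}$ is not unmixed, and derive that $|E(H)|$ must be strictly smaller than the stated lower bound, contradicting the hypothesis. The engine of the argument is Proposition \ref{P.BoundNeighbourhood}, which under this contradictory assumption gives, for every cut vertex $v_i$, the inequality $|N_B(v_i)| \leq \lfloor \frac{n+r_i}{2}\rfloor - 2$, where $r_i = |N_H[v_i]|$. Since $H$ is the induced subgraph of $\B$ on $\{v_1,\dots,v_k\}$, the degree of $v_i$ inside $H$ is exactly $|N_H(v_i)| = r_i - 1$, and obviously $|N_H(v_i)| \leq |N_B(v_i)|$ because $H$ is an induced subgraph of $B$. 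Combining these two facts, $r_i - 1 \leq \lfloor \frac{n+r_i}{2}\rfloor - 2$ holds for every $i$; but more importantly we can sum the bound $|N_H(v_i)| \leq \lfloor \frac{n+r_i}{2}\rfloor - 2$ over all $i$.

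Summing over $i = 1,\dots,k$ and using the handshake identity $\sum_{i=1}^k |N_H(v_i)| = 2|E(H)|$ yields
\[
2|E(H)| = \sum_{i=1}^k |N_H(v_i)| \leq \sum_{i=1}^k \left( \left\lfloor \frac{n+r_i}{2} \right\rfloor - 2 \right) = \sum_{i=1}^k \left\lfloor \frac{n+r_i}{2} \right\rfloor - 2k,
\]
so that
\[
|E(H)| \leq \frac{1}{2}\sum_{i=1}^k \left\lfloor \frac{n+r_i}{2}\right\rfloor - k.
\]
On the other hand, since $H$ is assumed to be a block on $k$ vertices, it is in particular $2$-connected, hence has at least $k$ edges (a block on $k \geq 3$ vertices contains a cycle through all... at minimum a cycle, so $|E(H)| \geq k$; and for $k \leq 2$ the induced subgraph on the cut vertices is complete and we already know a good cut vertex exists, so we may assume $k \geq 3$). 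Comparing $|E(H)| \geq k$ with the displayed upper bound forces $\frac{1}{2}\sum_{i=1}^k \lfloor \frac{n+r_i}{2}\rfloor - k \geq k$, i.e. $\frac{1}{2}\sum_{i=1}^k \lfloor \frac{n+r_i}{2}\rfloor \geq 2k$; this is not yet a contradiction on its own. The contradiction instead comes from pitting the upper bound $|E(H)| \leq \frac{1}{2}\sum \lfloor \frac{n+r_i}{2}\rfloor - k$ directly against the hypothesis $|E(H)| \geq \frac{1}{2}\sum \lfloor \frac{n+r_i}{2}\rfloor - n + 1$: these two are compatible only if $-k \geq -n+1$, i.e. $k \leq n-1$, which is always true — so I must sharpen one of the two estimates.

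The sharpening I expect to need: the bound $|E(H)| \leq \frac{1}{2}\sum_i \lfloor\frac{n+r_i}{2}\rfloor - k$ can likely be improved to $|E(H)| \leq \frac{1}{2}\sum_i \lfloor\frac{n+r_i}{2}\rfloor - n + 1$ with the \emph{strict} inequality being what we want, OR one uses that among the $v_i$ there are vertices with $r_i$ small. More plausibly, the intended route is: since $J_{\B\setminus\{v_i\}}$ is never unmixed, Proposition \ref{P.blocksWithDegree2verticesAreGood} gives $|N_B(v)| \geq 3$ for \emph{all} $v \in V(B)$, and one combines the $k$ inequalities from Proposition \ref{P.BoundNeighbourhood} for the cut vertices with the edge count contributed by the $n-k$ non-cut vertices of $B$, exactly as in Remark \ref{R.boundsNumEdgesBlocks}; then the hypothesis on $|E(H)|$, together with the fact that edges of $B$ not in $H$ account for at least $|E(B)| - |E(H)|$, produces the needed contradiction after reindexing. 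Concretely, I would write $2|E(H)| = \sum_{i} |N_H(v_i)| = \sum_i |N_B(v_i)| - (\text{edges from cut vertices to non-cut vertices}) \cdot 2 / \dots$ carefully, bound each $|N_B(v_i)|$ from above by $\lfloor\frac{n+r_i}{2}\rfloor - 2$ and each $|N_H(v_i)|$ from below appropriately, and track the slack. The main obstacle is precisely this bookkeeping: getting the constant to land on $-n+1$ rather than $-k$ requires carefully accounting for the at least $n-k$ non-cut, non-leaf vertices of $B$ and the edges they absorb (cf. the term $(n-1)+(n-2)(n-k-1)$ in Remark \ref{R.boundsNumEdgesBlocks}), and handling the floor functions — I would split into the parity cases of $n+r_i$ only at the very end if the clean inequality does not suffice. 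Once the numeric contradiction is reached, the conclusion that some cut vertex $v$ has $J_{\B\setminus\{v\}}$ unmixed is immediate.
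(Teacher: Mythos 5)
There is a genuine gap. Your first concrete attempt (summing $|N_H(v_i)| \leq |N_B(v_i)| \leq \lfloor \frac{n+r_i}{2}\rfloor - 2$) only yields $|E(H)| \leq \frac{1}{2}\sum_i \lfloor\frac{n+r_i}{2}\rfloor - k$, which, as you yourself observe, does not contradict the hypothesis. The subsequent ``sharpening'' is left as a speculative plan, and the ingredient you reach for --- Proposition \ref{P.blocksWithDegree2verticesAreGood} and the degree bounds on non-cut vertices from Remark \ref{R.boundsNumEdgesBlocks} --- is not the one that closes the argument: a lower bound of $3$ on $|N_B(w)|$ for $w \in V(B)\setminus V(H)$ says nothing about how many of those neighbors lie in $V(H)$, so it cannot produce the lower bound on cross edges that the double count needs.

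The missing idea is Lemma \ref{L.cvdeg1}, used in combination with the hypothesis that $H$ is a block. Since a block has no cut vertices, no $v_j$ is a cut vertex of $H$, so if some $w \in V(B)\setminus V(H)$ were adjacent to exactly one cut vertex $v_j$, Lemma \ref{L.cvdeg1} would already give that $J_{\B\setminus\{v_j\}}$ is unmixed, contrary to the contradiction assumption. Hence every one of the $n-k$ non-cut vertices of $B$ is adjacent to at least two cut vertices, so there are at least $2(n-k)$ edges of $B$ with exactly one endpoint in $V(H)$. On the other hand, that number equals $\sum_i |N_B(v_i)| - 2|E(H)|$, which by Proposition \ref{P.BoundNeighbourhood} and the hypothesis on $|E(H)|$ is at most
\[
\sum_{i=1}^k \left\lfloor \frac{n+r_i}{2}\right\rfloor - 2k - 2|E(H)| \;\leq\; \sum_{i=1}^k \left\lfloor \frac{n+r_i}{2}\right\rfloor - 2k - \sum_{i=1}^k \left\lfloor \frac{n+r_i}{2}\right\rfloor + 2n - 2 \;=\; 2(n-k)-2,
\]
a contradiction. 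This is exactly the paper's proof; your write-up identifies the right quantity to double count but never establishes the lower bound $2(n-k)$, without which the bookkeeping cannot land on the constant $-n+1$.
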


\begin{proof}
Suppose by contradiction that there are no cut vertices of $\B$ such that $J_{\B \setminus \{v\}}$ is unmixed. Since $H$ is a block, by Lemma \ref{L.cvdeg1} we may assume that for every vertex $w$ in $V(B) \setminus V(H)$ there exist indices $i \neq j$ such that $\{w,v_i\}, \{w,v_j\} \in E(B)$. Thus there exist at least $2|V(B)\setminus V(H)|=2(n-k)$ edges of $B$ with exactly one endpoint in $V(H)$.

By Proposition \ref{P.BoundNeighbourhood}, we have that $|N_{B}(v_i)| \leq \lfloor \frac{n+r_i}{2} \rfloor -2$ for every $i$. Therefore, by our assumption, the number of edges of $B$ having exactly one endpoint in $V(H)$ is at most
$$\sum_{i=1}^k \left\lfloor \frac{n+r_i}{2} \right\rfloor -2k-2|E(H)|\leq \sum_{i=1}^k \left\lfloor \frac{n+r_i}{2} \right\rfloor -2k-\sum_{i=1}^k \left\lfloor \frac{n+r_i}{2} \right\rfloor+2n-2=2(n-k)-2.$$
This yields a contradiction since $2(n-k)>2(n-k)-2$.
\end{proof}

The technical inequality in Lemma \ref{L.BoundEdgesH} is always satisfied when $k=4$, as we show next.

\begin{proposition} \label{P.FourWhiskers}
Let $\B$ be accessible with $k=4$ whiskers. If the graph $H$ induced by $\B$ on its cut vertices is a block, then there exists a cut vertex $v$ of $\B$ such that $J_{\B \setminus \{v\}}$ is unmixed.
\end{proposition}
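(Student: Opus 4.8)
The plan is to invoke Lemma \ref{L.BoundEdgesH}, which already carries all the combinatorial weight. Let $H$ be the graph induced by $\B$ on its four cut vertices $v_1,\dots,v_4$, and set $r_i=|N_H[v_i]|$. By hypothesis $H$ is a block, so by Lemma \ref{L.BoundEdgesH} it suffices to verify the purely numerical inequality
\[
|E(H)| \ \geq\ \frac{1}{2}\sum_{i=1}^{4}\left\lfloor \frac{n+r_i}{2}\right\rfloor - n + 1 .
\]
So the whole proof reduces to an elementary estimate for $k=4$.

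First I would record two structural facts. Since $H$ is induced on the four distinct cut vertices, $|V(H)|=4$; being a block on four vertices, $H$ is biconnected, hence has minimum degree at least $2$, so $|E(H)|\geq |V(H)| = 4$. Second, the handshake lemma gives $\sum_{i=1}^{4}|N_H(v_i)| = 2|E(H)|$, and since $r_i = |N_H(v_i)| + 1$ we get $\sum_{i=1}^{4} r_i = 2|E(H)| + 4$. Now bound the right-hand side using $\lfloor x\rfloor \leq x$:
\[
\frac{1}{2}\sum_{i=1}^{4}\left\lfloor \frac{n+r_i}{2}\right\rfloor \ \leq\ \frac{1}{4}\sum_{i=1}^{4}(n+r_i) \ =\ n + \frac{2|E(H)|+4}{4} \ =\ n + \frac{|E(H)|}{2} + 1,
\]
so the right-hand side of the inequality from Lemma \ref{L.BoundEdgesH} is at most $\frac{|E(H)|}{2} + 2$. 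Since $|E(H)|\geq 4$, we have $\frac{|E(H)|}{2} + 2 \leq |E(H)|$, and therefore the hypothesis of Lemma \ref{L.BoundEdgesH} is satisfied; the desired cut vertex $v$ with $J_{\B\setminus\{v\}}$ unmixed then exists.

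I do not expect any real obstacle: the case $k=4$ is exactly the borderline where the incidence identity $\sum r_i = 2|E(H)| + 4$ together with the trivial lower bound $|E(H)|\geq 4$ for a biconnected graph on four vertices closes the estimate, and the genuine difficulty is already absorbed into Lemma \ref{L.BoundEdgesH} (hence into Proposition \ref{P.BoundNeighbourhood} and Lemma \ref{L.cvdeg1}). An alternative, more pedestrian route would be to enumerate the three blocks on four vertices — the $4$-cycle, $K_4$ minus an edge, and $K_4$ — and check the inequality in each case (the $4$-cycle with $n$ odd being the tight one); the incidence-counting argument above is shorter and avoids the case split. The only point I would state explicitly is why $|E(H)|\geq 4$, namely that a biconnected graph on $v$ vertices has at least $v$ edges.
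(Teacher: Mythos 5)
Your argument is correct, and it rests on the same key reduction as the paper's proof: both invoke Lemma \ref{L.BoundEdgesH} and reduce the proposition to checking the numerical inequality $|E(H)| \geq \frac{1}{2}\sum_{i=1}^4 \lfloor \frac{n+r_i}{2}\rfloor - n + 1$ for a block $H$ on four vertices. The only divergence is in how that inequality is verified. The paper sets $h=\max_i r_i$, uses $\lfloor\frac{n+r_i}{2}\rfloor\leq\frac{n+h}{2}$ to bound the right-hand side by $h+1$, and then splits into two cases ($H$ a $4$-cycle, with $|E(H)|=4$ and $h=3$, versus $H$ not a $4$-cycle, with $|E(H)|\geq 5$ and $h=4$), concluding $|E(H)|\geq h+1$ each time. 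You instead combine the handshake identity $\sum_i r_i = 2|E(H)|+4$ with $\lfloor x\rfloor\leq x$ to bound the right-hand side by $\frac{|E(H)|}{2}+2$, so that the single estimate $|E(H)|\geq 4$ for a biconnected graph on four vertices closes the argument with no case analysis; your arithmetic checks out, and the justification you flag for $|E(H)|\geq 4$ (minimum degree at least $2$ in a block on at least three vertices) is exactly right. The two finishes are equally short; yours avoids enumerating the blocks on four vertices, while the paper's makes visible where the bound is tight (both the $4$-cycle and $K_4$ minus an edge satisfy $|E(H)|=h+1$). Neither version generalizes Lemma \ref{L.BoundEdgesH} beyond $k=4$, since for larger $k$ the trivial bound $|E(H)|\geq k$ no longer dominates $\frac{|E(H)|}{2}+\frac{k}{2}$ comfortably, which is consistent with the paper treating $k=4$ as the special case.
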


\begin{proof}
Let $h=\mathrm{max} \{r_i: 1 \leq i \leq 4\}$, where $r_i$ is defined as in Lemma \ref{L.BoundEdgesH}. It is enough to show that $|E(H)| \geq h+1$ for every block $H$ on $4$ vertices. Indeed, $h+1 = \frac{1}{2}\sum_{i=1}^4 \frac{n+h}{2} -n+1 \geq \frac{1}{2}\sum_{i=1}^4 \lfloor \frac{n+r_i}{2} \rfloor -n+1$ and Lemma \ref{L.BoundEdgesH} would imply the claim. If $H$ is a square, then $|E(H)|=4$ and $h=3$, whereas if $H$ is not a square, $|E(H)| \geq 5$ and $h=4$. In both cases we conclude that $|E(H)| \geq h+1$.
\end{proof}

\subsection{New classes of graphs satisfying the conjecture}

Our next goal is to prove that the four conditions in \eqref{Conditions} are equivalent when $k \geq n-2$.

\begin{remark} \label{R.k=n}
If $J_{\B}$ is unmixed and $k=n$, we claim that $B$ is complete. Therefore, in this case $J_{\B}$ is strongly unmixed by applying \cite[Lemma 12]{LMRR} $k$ times and, in particular, there exists a cut vertex $v$ of $\B$ such that $J_{\B \setminus \{v\}}$ is unmixed.

To show the claim, suppose that $B$ is not complete. Notice that $k=n$ means that every vertex in $B$ is a cut vertex of $\B$, so we may assume that $\{v_1,v_2\} \notin E(\B)$. Moreover, $N_B(v_1)$ is a cut set of $\B$ because every vertex in $N_B(v_1)$ reconnects the leaf adjacent to it to $v_1$. However, $c_{\B}(N_B(v_1)) \geq |N_B(v_1)| + 2$ counting $|N_B(v_1)|$ leaves, the component containing $v_1$ and the component containing $v_2$. This contradicts the unmixedness of $J_{\B}$.
\end{remark}

\begin{proposition}\label{P.blocksWithTwoNonCutvertex}
If $k \geq n-2$ and $\B$ is accessible, then there exists a cut vertex $v$ of $\B$ such that $J_{\B \setminus \{v\}}$ is unmixed.
\end{proposition}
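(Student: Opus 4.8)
The plan is to argue by contradiction, splitting according to $n-k\in\{0,1,2\}$. Suppose $\B$ is accessible with $k\ge n-2$ whiskers but that $J_{\B\setminus\{v\}}$ is not unmixed for any cut vertex $v$ of $\B$. The case $k=n$ is settled immediately by Remark \ref{R.k=n} (there $B$ is complete, so $J_{\B}$ is even strongly unmixed), hence we may assume that $B$ has exactly one or exactly two vertices that are non-cut vertices of $\B$; call these the \emph{internal non-cut vertices} of $\B$. Throughout, for every cut vertex $v$ Proposition \ref{P.good_cut_vertex} together with Lemma \ref{L.strictly} furnishes a cut set $S_v\in\CC(\B\setminus\{v\})$ with $N_B(v)\subsetneq S_v$; since $\B$ itself is unmixed, Lemma \ref{L.CutsetsInUnmixedGraphs} applies to its cut sets, and this is the engine of what follows.

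I would first record the constraints forced by the contradiction hypothesis, using only results already proved: $k\ge 4$ (Proposition \ref{P.three_whiskers}); every vertex of $B$ has degree $\ge 3$ in $B$ (Proposition \ref{P.blocksWithDegree2verticesAreGood}); $B$ has no free vertex (Proposition \ref{P.blockWithFreeVertex}); for every cut vertex $v_i$, $N_B(v_i)$ contains at least one internal non-cut vertex (Lemma \ref{L.NeighboursOnlyCutVertices}); the bound $|N_B(v_i)|\le\lfloor\frac{n+k}{2}\rfloor-2$ for every cut vertex (Proposition \ref{P.BoundNeighbourhood}); and the edge estimate of Remark \ref{R.boundsNumEdgesBlocks}. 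Recall also that $H$, the graph induced by $\B$ on its cut vertices, is connected by \cite[Proposition 4.10]{BMS22}.

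Next, the case $n-k=1$. Let $w$ be the unique internal non-cut vertex. By Lemma \ref{L.NeighboursOnlyCutVertices}, every cut vertex is adjacent to $w$, so $w$ is universal in $B$; hence $H=B\setminus\{w\}$, it is connected but not complete (otherwise $B$ is complete and $w$ is free), and $|E(B)|=(n-1)+|E(H)|$. Specializing the estimates of Proposition \ref{P.BoundNeighbourhood} and Remark \ref{R.boundsNumEdgesBlocks} to $k=n-1$ controls $|E(H)|$ and the numbers $r_i=|N_H[v_i]|$ tightly; when these force $H$ to be $2$-connected, Lemma \ref{L.BoundEdgesH} applies directly and we are done. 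In the remaining situations I would fix a cut vertex $v$ with $|N_H(v)|$ minimal, pass to $S_v\supsetneq N_B(v)$, and analyse the connected components of $(\B\setminus\{v\})\setminus S_v$: they are the leaf $f_v$, the whiskers of the cut vertices lying in $S_v$, and the components of $H$ outside $\{v\}\cup S_v$ together with their whiskers. Using the accessibility of $\B$ I would descend through $\CC(\B)$ inside $S_v$ until one is forced either to delete an element of $N_B(v)$ — impossible by Lemma \ref{L.strictly} — or to violate the equality $c_{\B}(T)=|T|+1$.

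Finally, the case $n-k=2$, with internal non-cut vertices $w_1,w_2$; this is where I expect the main difficulty. Since each $w_j$ has degree $\ge 3$ in $B$ and is adjacent to at most one other internal non-cut vertex, each $w_j$ is adjacent to at least two cut vertices, so Lemma \ref{L.cvdeg1} is not available and one genuinely has to argue. I would choose a cut vertex $v$ for which $N_B(v)\cap\{w_1,w_2\}$ is as large as possible, take $S_v\supsetneq N_B(v)$, and then mimic the constructions in the proofs of Lemma \ref{L.vertex_deg_2} and Lemma \ref{L.cvdeg1}: build from $S_v$ and from the neighbourhoods $N_B(v_i)$ auxiliary cut sets of the unmixed graph $\B$, count their components exactly via Lemma \ref{L.CutsetsInUnmixedGraphs}, and invoke the accessibility of $\B$ to produce a descending chain whose last deletion is forced to be one of $w_1,w_2$ — every cut vertex being ``pinned'' by its whisker, exactly as in those proofs — eventually contradicting Lemma \ref{L.strictly} or the unmixedness equality. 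When $k=4$ and $H$ happens to be a block, Proposition \ref{P.FourWhiskers} already does the job, so only the configurations with $H$ not $2$-connected need the direct argument. The obstacle is precisely this two-internal-vertex case: with two such vertices there is real latitude in how $w_1$ and $w_2$ attach to the cut vertices and to each other, and the component bookkeeping along the accessibility chain of $\B$ must be pushed to a genuine contradiction rather than to a numerical inequality that some configuration might satisfy.
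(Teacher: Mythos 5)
Your proposal is a plan rather than a proof, and the plan does not close. The case $k=n$ is fine (same as the paper, via Remark \ref{R.k=n}), and your general framework --- contradiction hypothesis, $S_v\supsetneq N_B(v)$ from Proposition \ref{P.good_cut_vertex} and Lemma \ref{L.strictly}, counting via Lemma \ref{L.CutsetsInUnmixedGraphs} --- is the right one. But the decisive tool you never invoke is \cite[Proposition 4.18]{BMS22}, which lets one remove a \emph{non-cut vertex} from a cut set of an unmixed graph and remain a cut set. That is what makes both remaining cases work. For $k=n-1$: if $w\notin N_B(v)$ one concludes by Lemma \ref{L.NeighboursOnlyCutVertices}; otherwise $w$ is the only non-cut vertex of $S$, so $S\setminus\{w\}\in\CC(\B)$ by that proposition, contradicting Lemma \ref{L.strictly} since $w\in N_B(v)$. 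Your alternative route through Lemma \ref{L.BoundEdgesH} is not viable in general, because that lemma requires $H$ to be a block, which nothing in your estimates guarantees, and your fallback (``descend through $\CC(\B)$ until forced to delete an element of $N_B(v)$'') is not an argument --- accessibility only guarantees \emph{some} removable element at each stage, and you give no reason the chain must eventually hit $N_B(v)$ rather than terminate harmlessly.

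For $k=n-2$ the gap is worse, and you concede it yourself. The paper's proof proceeds by first showing that each cut vertex $v_i$ is adjacent to \emph{exactly} one of $w_1,w_2$: by \cite[Proposition 4.18]{BMS22} one of $S_i\setminus\{w_1\}$, $S_i\setminus\{w_2\}$ is a cut set, and Lemma \ref{L.strictly} rules out removing the one lying in $N_B(v_i)$, forcing the other $w_j$ to lie in $S_i\setminus N_B(v_i)$. This dichotomy, together with \cite[Lemma 4.14]{BMS22} (producing two non-adjacent cut vertices $v_{k-1},v_k$ adjacent to $w_2$ and outside $S_1$), permits the construction of one explicit set $T=\{v_2,\dots,v_{k-1},w_2\}$, which Lemma \ref{L.CutsetsInUnmixedGraphs} certifies as a cut set of $\B$ but in which $v_{k-1}$ reconnects nothing --- a contradiction. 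Your proposal replaces this with an unspecified ``component bookkeeping along the accessibility chain'' that you explicitly say ``must be pushed to a genuine contradiction,'' i.e., the main case is left open. Nothing in the proposal supplies the adjacency dichotomy or the witness set $T$, so the proposition is not proved.
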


\begin{proof}
The case $k=n$ follows from Remark \ref{R.k=n}.
Assume first that $k=n-1$. This means that in $B$ there is exactly one vertex $w$ that is not a cut vertex of $\B$. Let $v \in B$ be a cut vertex of $\B$ and assume by contradiction that there exists a cut set $S$ of $\B \setminus \{v\}$ c, ontaining $N_B(v)$. If $w \not\in N_B(v)$, we conclude by Lemma \ref{L.NeighboursOnlyCutVertices}. Then we can assume that $w \in N_B(v) \subseteq S$ and every vertex in $S \setminus \{w\}$ is a cut vertex. Hence, \cite[Proposition 4.18]{BMS22} implies that $S \setminus \{w\}$ is a cut set of $\B$ and this contradicts Lemma \ref{L.strictly}.

Finally, assume that $k=n-2$, i.e., in $B$ there are exactly two vertices $w_1$, $w_2$ that are not cut vertices of $\B$. Suppose by contradiction that $J_{\B\setminus \{v_i\}}$ is not unmixed, for every $1 \leq i \leq k$.
As above, we may assume that every cut vertex of $\B$ is adjacent to at least one between $w_1$ and $w_2$, and we claim that every $v_i$ is adjacent to exactly one of them.
Fix $i \in \{1, \dots, k\}$. By Proposition \ref{P.good_cut_vertex} there exists $S_i \in \CC(\B)$ containing $N_B(v_i)$ and we may assume that $v_i$ is adjacent to $w_1$.
By \cite[Proposition 4.18]{BMS22} either $S_i \setminus \{w_1\}$ or $S_i \setminus \{w_2\}$ is a cut set of $\B$. Since $w_1 \in N_B(v_i)$, Lemma \ref{L.strictly} implies that $w_2 \in S_i \setminus N_B(v_i)$, in particular $v_i$ is not adjacent to $w_2$.

Let $v_1, \dots, v_a$ and $v_{a+1}, \dots, v_k$ be the cut vertices adjacent to $w_1$ and $w_2$ respectively. Since $S_1$ and $S_1 \setminus \{w_2\}$ are cut sets of $\B$, by \cite[Lemma 4.14]{BMS22} $w_2$ is adjacent to at least two cut vertices, say $v_{k-1}$ and $v_{k}$, not contained in $S_1$ and such that $\{v_{k-1},v_k\} \notin E(\B)$.
Consider the set $T=\{v_2, \dots, v_{k-1}, w_2\}$ and recall that, for every $i$, $f_i$ is the leaf adjacent to $v_i$. The connected components of $\B\setminus T$ are a path on $\{f_1, v_1, w_1\}$, the edge $\{v_k,f_k\}$, and the isolated vertices $f_2, \dots, f_{k-1}$. Therefore, $T$ is a cut set of $\B$ by Lemma \ref{L.CutsetsInUnmixedGraphs}. On the other hand, since $v_{k-1}$ is not adjacent to $v_1$, $w_1$ and $v_k$, it follows that $c_{\B}(T)=c_{\B}(T\setminus \{v_{k-1}\})$, which yields a contradiction.
\end{proof}

In the case $k \geq n-2$ we also prove that accessibility, strong unmixedness, and hence Cohen-Macaulayness, are equivalent.

\begin{theorem} \label{T.Conjecture_n=k+2}
If $k \geq n-2$ and $\B$ is accessible, then $J_{\B}$ is strongly unmixed. In particular, Conjecture \ref{conjecture} holds for $\B$.
\end{theorem}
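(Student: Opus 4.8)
The plan is to feed Proposition~\ref{P.blocksWithTwoNonCutvertex} into the recursive definition of strong unmixedness and then run an induction on the number of vertices. Since $\B$ is accessible, $J_{\B}$ is unmixed, so by Proposition~\ref{P.blocksWithTwoNonCutvertex} there is a cut vertex $v$ of $\B$ with $J_{\B\setminus\{v\}}$ unmixed; by \cite[Corollary~5.16]{BMS22} both $\B\setminus\{v\}$ and $\B_v\setminus\{v\}$ are then accessible, and they have strictly fewer vertices than $\B$. If we knew $J_{\B\setminus\{v\}}$ and $J_{\B_v\setminus\{v\}}$ were strongly unmixed, then Proposition~\ref{P.stronglyUnmixed} would give that $J_{\B}$ is strongly unmixed, and the implications in \eqref{Conditions} together with the field-independence of strong unmixedness would yield the Cohen-Macaulayness of $R/J_{\B}$, its field-independence, and Conjecture~\ref{conjecture} for $\B$. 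So everything reduces to an inductive claim that accessibility implies strong unmixedness for the graphs met in this recursion.

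The obstacle is that $\B\setminus\{v\}$ and $\B_v\setminus\{v\}$ are no longer blocks with whiskers, so I would prove a statement for a larger class: by induction on $|V(G)|$, that $J_G$ is strongly unmixed for every accessible graph $G$ such that every block of $G$ which is not a connected component of $G$ has at most two vertices that are not cut vertices of $G$. The graph $\B$ lies in this class precisely because $k\geq n-2$: its only nontrivial block is $B$, whose non-cut vertices of $\B$ are the $\leq 2$ vertices of $B$ carrying no whisker, while each whisker $K_2$ has a single non-cut vertex; the extreme instance $k=n$, where $B$ is complete, is even covered directly by Remark~\ref{R.k=n}, and all-complete-component graphs (the base case) satisfy the claim by definition of strong unmixedness. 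For the inductive step take $G$ in the class, which we may assume connected and non-complete. One first shows that $G$ has a good cut vertex $v$, i.e.\ with $J_{G\setminus\{v\}}$ unmixed: otherwise \cite[Proposition~6.1]{BMS22} produces a block $B$ of $G$ such that every cut vertex of $G$ in $B$ has a cut set of $G\setminus\{v\}$ containing $N_B(v)$; since a connected non-complete accessible graph has a cut vertex and every block then contains one, $B$ is not a component, so by \cite[Proposition~3]{LMRR21} the associated block with whiskers $\B$ is accessible and, by the class hypothesis, has at least $|V(B)|-2$ whiskers; Proposition~\ref{P.blocksWithTwoNonCutvertex} now supplies a good cut vertex of $\B$, and a contradiction is extracted exactly as in the final paragraph of the proof of Theorem~\ref{T.minus_v_unmixed}. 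Given such a $v$, \cite[Corollary~5.16]{BMS22} makes $G\setminus\{v\}$ and $G_v\setminus\{v\}$ accessible, a structural lemma (below) keeps them in the class, and the inductive hypothesis makes their binomial edge ideals strongly unmixed; Proposition~\ref{P.stronglyUnmixed} then gives that $J_G$ is strongly unmixed.

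The main obstacle is exactly that structural lemma: stability of the class above under $G\rightsquigarrow G\setminus\{v\}$ and $G\rightsquigarrow G_v\setminus\{v\}$ at a cut vertex $v$. Deletion of $v$ is the milder case: removing $v$ can demote some cut vertices of $G$, but inside each block of $G\setminus\{v\}$ these are confined to the (at most two per block) former non-cut vertices together with the one or two vertices that take over $v$'s separating role, so the bound of two survives a careful count, while the newly isolated leaf of $v$ just contributes a complete component. The delicate case is $G_v\setminus\{v\}$, because turning $N_G(v)$ into a clique can amalgamate several blocks of $G$ into one, and one must check that the amalgamated block still has at most two vertices failing to be cut vertices of $G_v\setminus\{v\}$; here it is convenient that $v$ becomes a free vertex of $G_v$, so that (when $v$ carries a whisker $f$) one may analyse $(G_v\setminus\{v\})\setminus\{f\}$ instead, which excises the freshly created clique, and then combine this with separate, easier treatments of the extreme cases $k=n$ and $k=n-1$, leaving the genuine bookkeeping to the case $k=n-2$.
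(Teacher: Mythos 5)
Your skeleton is the right one and matches the paper's: find a good cut vertex via Proposition~\ref{P.blocksWithTwoNonCutvertex}, invoke Proposition~\ref{P.stronglyUnmixed}, and induct. But the induction you set up rests on a structural lemma that is not only unproven but false exactly in the one nontrivial case. Take $\B$ with $k=n-2$ whiskers and a good cut vertex $v_1$. In $\B_{v_1}\setminus\{v_1\}$ the former leaf $f_1$ is joined to all of $N_B(v_1)$, and one checks that $(B\setminus\{v_1\})\cup\{f_1\}$ is $2$-connected (each component of $B\setminus\{v_1,u\}$ meets $N_B(v_1)$, so $f_1$ reconnects them). Hence $\B_{v_1}\setminus\{v_1\}$ has a single nontrivial block on $n$ vertices whose non-cut vertices are the two unwhiskered vertices $w_1,w_2$ of $B$ \emph{and} $f_1$: three of them. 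So your class ``every non-component block has at most two non-cut vertices'' is not closed under $G\rightsquigarrow G_v\setminus\{v\}$, and the inductive hypothesis cannot be applied to $\B_{v_1}\setminus\{v_1\}$. Your proposed escape --- analysing $(G_v\setminus\{v\})\setminus\{f\}$ instead --- is indeed the right move (it is what the paper does), but it is not a cosmetic adjustment: you must prove that this smaller graph is accessible (the paper does this by showing no cut set of it contains $N_B(v_1)$, using the unmixedness of $J_{\B\setminus\{v_1\}}$, and then tracking cut sets across the removal of the free vertex), and you must transfer strong unmixedness back across the added free vertex. None of this is carried out; you explicitly leave ``the genuine bookkeeping'' for $k=n-2$, which is precisely the only case where the statement is not already covered by Remark~\ref{R.k=n} or Proposition~\ref{P.blocksWithOneNonCutvertex}.

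For comparison, the paper avoids the closure problem altogether: it inducts on $k$ rather than on $|V(G)|$, handles $\B\setminus\{v_1\}$ by passing to the blocks-with-whiskers $\C$ of its components via \cite[Theorem 3.17]{SS22} and \cite[Proposition 3]{LMRR21} (each such $\C$ has at most $k-1$ whiskers and still satisfies $k'\geq n'-2$), and handles $\B_{v_1}\setminus\{v_1\}$ by the explicit accessibility argument for $H=\B_{v_1}\setminus\{v_1,f_1\}$ sketched above. Your first reduction (existence of a good cut vertex for graphs in your class, via \cite[Proposition 6.1]{BMS22} and the argument of Theorem~\ref{T.minus_v_unmixed}) is sound, and your closure claim for plain deletion $G\rightsquigarrow G\setminus\{v\}$ is plausible for the graphs actually arising here (whiskered vertices are never demoted), though the ``careful count'' is asserted rather than performed. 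The essential missing content is the accessibility of $(G_v\setminus\{v\})\setminus\{f\}$ and the free-vertex transfer; without them the inductive step does not close.
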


\begin{proof}
We proceed by induction on $k$. If $k=1$, then $n \geq 3$ and $B$ is either a path of length at most two or a triangle; in both cases $J_{\B}$ is strongly unmixed.
Assume $k>1$. By Proposition \ref{P.blocksWithTwoNonCutvertex}, there exists a cut vertex, say $v_1$, such that $J_{\B\setminus \{v_1\}}$ is unmixed, and by \cite[Corollary 5.16]{BMS22} the graphs $\B\setminus \{v_1\}$ and $\B_{v_1}\setminus \{v_1\}$ are accessible.
The graph $\B\setminus \{v_1\}$ consists of an isolated vertex and another connected component $A$. Assume first that $A$ is a block with $k$ whiskers. Therefore, $|V(A)| \geq 2k$ vertices, but on the other hand $|V(A)|=n+k-2\leq 2k$, which implies $|V(A)|=2k$. By Remark \ref{R.k=n}, $J_A$ is strongly unmixed and then $J_{\B\setminus \{v_1\}}$ is strongly unmixed.
If $A$ is not a block with $k$ whiskers, it is easy to see that every block $C$ of $\B \setminus \{v_1\}$ has at most $k-1$ cut vertices. Since $\C$ is accessible by \cite[Proposition 3]{LMRR21}, by induction $J_{\C}$ is strongly unmixed.
Moreover, by \cite[Theorem 3.17]{SS22} it follows that $J_{\B \setminus \{v_1\}}$ is strongly unmixed. Therefore, in light of Proposition \ref{P.stronglyUnmixed}, we only need to prove that $J_{\B_{v_1} \setminus \{v_1\}}$ is strongly unmixed.

Let $G=\B_{v_1} \setminus \{v_1\}$ and $H=G\setminus \{f_1\}$.
We claim that $H$ is accessible. We first note that there are no cut sets of $H$ containing $N_G(f_1)=N_{B}(v_1)$. Indeed, if there is such a cut set $S$, then $S$ would be a cut set of $\B \setminus \{v_1\}$ containing $N_{B}(v_1)$ and this is not possible because $J_{\B \setminus \{v_1\}}$ is unmixed.
It is clear that a cut set $T$ of $H$ is also a cut set of $G$, because $f_1$ is a free vertex of $G$ and $H=G\setminus \{f_1\}$. Moreover, $c_H(T)=c_G(T)=|T|+1$ because $T$ does not contain $N_G(f_1)$. Finally, since $G$ is accessible, there exists $t \in T$ such that $T \setminus \{t\}$ is a cut set of $G$ and it easily follows that $T \setminus \{t\}$ is also a cut set of $H$; hence, $H$ is accessible.

As in the case of $\B \setminus \{v_1\}$, the induction hypothesis together with \cite[Proposition 3]{LMRR21} and \cite[Theorem 3.17]{SS22} imply that $J_{G}$ is strongly unmixed.
\end{proof}

However, if $k \geq n-1$, the strong unmixedness of $J_{\B}$ is equivalent to its unmixedness.

\begin{proposition}\label{P.blocksWithOneNonCutvertex}
Let $k \geq n-1$. If $J_{\B}$ is unmixed, then it is strongly unmixed. In particular, $R/J_{\B}$ is Cohen-Macaulay if and only if $J_{\B}$ is unmixed.
\end{proposition}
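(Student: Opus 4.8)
The ``in particular'' assertion is immediate: if $J_{\B}$ is strongly unmixed then $R/J_{\B}$ is Cohen--Macaulay by \eqref{Conditions}, and Cohen--Macaulayness of $R/J_{\B}$ always forces unmixedness. So the plan is to prove the first assertion, and the route I would take is to show that $\B$ is accessible and then quote Theorem~\ref{T.Conjecture_n=k+2}, which applies because $k\ge n-1\ge n-2$. Since $J_{\B}$ is assumed unmixed, ``$\B$ accessible'' is exactly the statement that $\CC(\B)$ is an accessible set system. As each whisker is attached to a distinct vertex of $B$ we have $k\le n$, so only the cases $k=n$ and $k=n-1$ arise.

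The case $k=n$ needs nothing new: Remark~\ref{R.k=n} already says that unmixedness forces $B$ to be complete and $J_{\B}$ to be strongly unmixed. So assume $k=n-1$ and let $w$ be the unique vertex of $B$ that is not a cut vertex of $\B$. The computation I would set up first is the component count for a cut set $S$: since cut sets contain only non-free vertices, $S\subseteq V(B)$, and the connected components of $\B\setminus S$ are the leaves attached to the cut vertices of $S$ (of which there are $|S\cap\{v_1,\dots,v_k\}|$) together with the components of $B\setminus S$ carrying their surviving whiskers; hence $c_{\B}(S)=|S\cap\{v_1,\dots,v_k\}|+c_B(S)$. Comparing with $c_{\B}(S)=|S|+1$ from Lemma~\ref{L.CutsetsInUnmixedGraphs} yields $c_B(S)=1$ if $w\notin S$ and $c_B(S)=2$ if $w\in S$; in particular $B\setminus S\ne\emptyset$, and $w\in S$ forces $|V(B)|\ge 3$, so $B$ is $2$-connected.

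Now fix a nonempty $S\in\CC(\B)$. If $w\notin S$, then all elements of $S$ are cut vertices and $B\setminus S$ is connected and nonempty; connectivity of $B$ produces $v_j\in S$ with a neighbor in $B\setminus S$, so $c_B(S\setminus\{v_j\})=1$, whence $c_{\B}(S\setminus\{v_j\})=|S\setminus\{v_j\}|+1$ and $S\setminus\{v_j\}\in\CC(\B)$ by Lemma~\ref{L.CutsetsInUnmixedGraphs}. If $w\in S$, write $B\setminus S=C_1\sqcup C_2$. If $w$ has a neighbor in each $C_i$, re-adding $w$ merges the components and $S\setminus\{w\}\in\CC(\B)$; if some cut vertex $v_j\in S$ has neighbors in exactly one $C_i$, re-adding $v_j$ leaves two components and $S\setminus\{v_j\}\in\CC(\B)$. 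The one remaining configuration — after relabeling, $w$ has no neighbor in $C_2$ and every cut vertex of $S$ has neighbors in both $C_i$ or in neither — I would rule out as follows. Let $P$ and $Q$ be the sets of cut vertices of $S$ with neighbors in both components and in neither, respectively, so $S\setminus P=\{w\}\cup Q$. Since $B$ is $2$-connected, $C_2$ has at least two neighbors in $S$; none of these is $w$ or lies in $Q$, so $P\ne\emptyset$. Moreover $N_B(q)\subseteq S$ for every $q\in Q$, so no vertex of $\{w\}\cup Q$ has a neighbor in $C_2$; as also $N_B(C_2)\setminus C_2\subseteq S$, the set $C_2$ is a union of connected components of $B\setminus P$, and since $B\setminus P$ also contains the nonempty set $C_1$ disjoint from $C_2$ we get $c_B(P)\ge 2$, i.e.\ $c_{\B}(P)\ge|P|+2$, contradicting Lemma~\ref{L.CutsetsInUnmixedGraphs}. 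This shows $\CC(\B)$ is accessible and completes the plan.

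I expect the only delicate step to be this last impossibility argument, which is the one place where unmixedness is genuinely used (it enters through Lemma~\ref{L.CutsetsInUnmixedGraphs}); the rest is routine bookkeeping with component counts. Alternatively, this step could be replaced by the cut-set manipulations for blocks with whiskers from \cite{BMS22} (for instance \cite[Proposition 4.18]{BMS22}), used as in the proof of Proposition~\ref{P.blocksWithTwoNonCutvertex}.
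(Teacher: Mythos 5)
Your proposal is correct and follows essentially the same route as the paper: reduce to showing $\B$ is accessible and invoke Theorem~\ref{T.Conjecture_n=k+2}, handling $k=n$ via Remark~\ref{R.k=n} and splitting on whether the unique non-cut vertex $w$ lies in the cut set $S$. The only difference is that for $w\in S$ the paper observes in one line that $S\setminus\{w\}$ is always a cut set (each $v_j\in S$ still reconnects its leaf $f_j$ to a component of $B\setminus S$), so your three-way case split and the impossibility argument via the set $P$ are more work than necessary, though valid.
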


\begin{proof}
The case $k=n$ follows from Remark \ref{R.k=n}, then assume $k=n-1$. By Theorem \ref{T.Conjecture_n=k+2} it is enough to prove that $\B$ is accessible. Let $w$ be the unique non cut vertex of $\B$ contained in $B$, and let $S$ be a non-empty cut set of $\B$.
If $w \notin S$, $S$ consists only of cut vertices and hence $S \setminus \{v_i\}$ is a cut set of $\B$ for every $v_i \in S$ by \cite[Lemma 4.1]{BMS22}. 

Now suppose that $w \in S$. Any cut vertex $v_j$ of $S$ reconnects the leaf $f_j$ to some other vertex in $B$ and this is still the case in $\B \setminus (S \setminus \{w\})$. This shows that $S \setminus \{w\}$ is a cut set of $\B$. Thus, $\B$ is accessible.
\end{proof}

\begin{example} \label{E.k=n-2}
Notice that Proposition \ref{P.blocksWithOneNonCutvertex} does not hold if $k=n-2$. For instance, let $G$ be the graph in Figure \ref{F.unmixedNonAccessible}, which has $7$ vertices in the block and $5$ whiskers. One can check that $J_G$ is unmixed and not accessible: in fact, $S = \{3,4,6,7\} \in \CC(G)$ but none of its subsets of cardinality three is a cut set.
\begin{figure}[ht!]
\centering
\begin{tikzpicture}[scale=0.65]
\node[label={left:{\small $1$}}] (a) at (0,0) {};
\node[label={right:{\small $2$}}] (b) at (2,0) {};
\node[label={below:{\small $3$}}] (c) at (3.2469796037174667,1.5636629649360592) {};
\node[label={above right:{\small $4$}}] (d) at (2.801937735804838,3.513518789299706) {};
\node[label={below:{\small $5$}}] (e) at (1,4.381286267534822) {};
\node[label={above left:{\small $6$}}] (f) at (-0.8019377358048378,3.513518789299707) {};
\node[label={below:{\small $7$}}] (g) at (-1.2469796037174667,1.56366296493606) {};
\node[label={below left:{\small $8$}}] (h) at (-0.8677674782351161,-1.801937735804838) {};
\node[label={below right:{\small $9$}}] (i) at (2.8677674782351135,-1.8019377358048392) {};
\node[label={below right:{\small $10$}}] (j) at (5.196835428081113,1.118621097023431) {};
\node[label={above:{\small $11$}}] (k) at (1,6.381286267534821) {};
\node[label={below left:{\small $12$}}] (l) at (-3.196835428081114,1.1186210970234303) {};
\draw (h) -- (a) -- (g) -- (f) -- (e) -- (d) -- (c) -- (b) -- (i)
(l) -- (g) -- (b) -- (d) -- (f) -- (a) -- (c) -- (j)
(c) -- (e) -- (g)
(a) -- (d)
(b) -- (f)
(e) -- (k);
\end{tikzpicture}
\caption{A non-accessible graph $G$ such that $J_G$ is unmixed}\label{F.unmixedNonAccessible}
\end{figure}
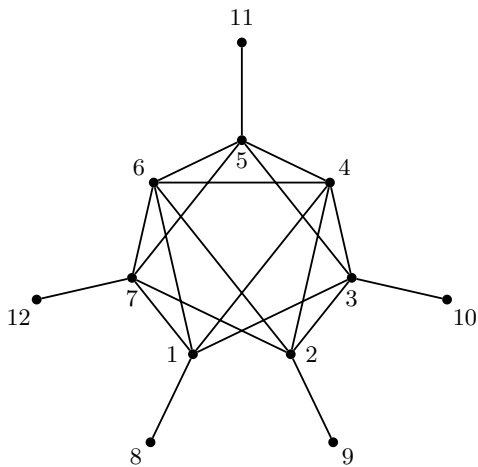
\end{example}

We are ready to prove that Conjecture \ref{conjecture} holds for every graph with at most $12$ vertices. This was verified computationally in \cite{LMRR21} by an exhaustive search on all connected graphs that took about a month on a high-performance computing system.

\begin{theorem}\label{T.12_vertices}
If $G$ is an accessible graph with at most 12 vertices, then $J_G$ is strongly unmixed. In particular, Conjecture \ref{conjecture} holds true for $G$.
\end{theorem}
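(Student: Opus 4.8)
The plan is to reduce, via the machinery already built in this section, to checking a finite and manageable combinatorial condition on accessible blocks with whiskers. By Remark~\ref{R.ReductionFewVertices}, it suffices to show that every accessible block with whiskers $\B$ on at most $12$ vertices has a cut vertex $v$ for which $J_{\B \setminus \{v\}}$ is unmixed; the strong unmixedness of $J_G$ for every accessible $G$ with at most $12$ vertices then follows. So first I would fix such a $\B$, writing $B$ for its underlying block with $n$ vertices and $k$ whiskers, so that $n + k \leq 12$. Recall that we may assume $k \geq 3$ (the cases $k = 1, 2$ are handled by \cite[Proposition 6.6]{BMS22}), and by Proposition~\ref{P.blocksWithDegree2verticesAreGood} we may assume every vertex of $B$ has degree at least three in $B$, so $n \geq 4$ and $|E(B)| \geq \tfrac{3n}{2}$.

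Next I would dispose of the easy ranges of $(n,k)$ using Theorem~\ref{T.filters}. If $k \leq 3$, apply part~(3) (Proposition~\ref{P.three_whiskers}); if $k \geq n - 2$, apply part~(6) (Proposition~\ref{P.blocksWithTwoNonCutvertex}). Combined with $k \geq 3$, $n \geq 4$, and $n + k \leq 12$, what remains are the pairs with $k \in \{4, 5, 6, 7, 8\}$ and $n - 1 > k$, i.e. $n \geq k + 2$; a short enumeration shows the surviving cases are $(n,k) \in \{(6,4), (7,4), (8,4), (7,5)\}$, together with $(n,k)=(6,4)$-type small leftovers — in all cases $n \leq 8$ and $k \leq 5$. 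For $k = 5$ (so $n = 7$) I would invoke Proposition~\ref{P.blockWithFreeVertex} and Proposition~\ref{P.blocksWithDegree2verticesAreGood} to reduce to blocks with minimum degree $\geq 3$ and no free vertex, and then use the edge-count bound \eqref{eq.boundsNumEdgesBlock}: assuming no good cut vertex exists, $\tfrac{3n}{2} \leq |E(B)| \leq \tfrac{(n-1)^2}{2} - \tfrac{k}{2}\bigl(n - \lfloor \tfrac{n+k}{2}\rfloor\bigr)$, which for $(n,k) = (7,5)$ forces $|E(B)|$ into a narrow window; a finite case analysis of blocks on $7$ vertices with that edge count and minimum degree $3$, combined with Proposition~\ref{P.BoundNeighbourhood} (which bounds $|N_B(v)|$ at each cut vertex) and Lemma~\ref{L.cvdeg1}, should eliminate every remaining possibility.

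The genuinely delicate part will be $k = 4$ with $n \in \{6, 7, 8\}$. Here I would first apply Proposition~\ref{P.FourWhiskers}: if the subgraph $H$ induced by $\B$ on its four cut vertices is a block (i.e. $2$-connected, so at least a $4$-cycle), we are done. Since $H$ is always connected (by \cite[Proposition 4.10]{BMS22}), the outstanding case is that $H$ is a tree or has a cut vertex — a path $P_4$, a star $K_{1,3}$, a triangle with a pendant vertex, or a ``paw''. For each such shape, and for each $n \in \{6,7,8\}$, I would run the same strategy: assume for contradiction that $J_{\B \setminus \{v\}}$ is non-unmixed for every cut vertex $v$, so by Proposition~\ref{P.good_cut_vertex} each $N_B(v_i)$ sits inside a cut set $S_i$ of $\B \setminus \{v_i\}$, strictly by Lemma~\ref{L.strictly}; use Lemma~\ref{L.cvdeg1} to force every non-cut vertex of $B$ to have at least two cut-vertex neighbours among $\{v_1,\dots,v_4\}$ when $H$ is suitably connected; and push the edge inequality \eqref{eq.boundsNumEdgesBlock} together with Proposition~\ref{P.BoundNeighbourhood} and the accessibility of $\B$ (which, via the ordering in \cite[Corollary 4.15]{BMS22}, controls how cut sets shrink) until the degree/component bookkeeping becomes contradictory, exactly as in the proofs of Propositions~\ref{P.three_whiskers} and~\ref{P.BoundNeighbourhood}. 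The main obstacle is that with $H$ not a block the clean counting argument of Lemma~\ref{L.BoundEdgesH} breaks down, so each of the (finitely many, small) block shapes $B$ compatible with the constraints must be examined by hand; the saving grace is that the bounds $n \leq 8$, minimum degree $3$, $|E(B)|$ trapped in a short interval, and $|N_B(v_i)| \leq \lfloor \tfrac{n+4}{2}\rfloor - 2$ leave only a handful of candidate graphs to check, and in each the existence of a good cut vertex follows by exhibiting the relevant cut set explicitly.
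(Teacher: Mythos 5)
Your overall skeleton matches the paper's: reduce via Remark \ref{R.ReductionFewVertices} to blocks with whiskers on at most $12$ vertices, eliminate ranges of $(n,k)$ with Propositions \ref{P.three_whiskers} and \ref{P.blocksWithTwoNonCutvertex}, and attack what is left with Proposition \ref{P.FourWhiskers} and the degree bounds. But there are two problems. First, a bookkeeping slip: Proposition \ref{P.blocksWithTwoNonCutvertex} covers $k \ge n-2$, so a surviving pair must satisfy $n \ge k+3$, not $n \ge k+2$; together with $k \ge 4$ and $n+k \le 12$ this forces $2k \le 9$, hence $k=4$ and $n \in \{7,8\}$ only. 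Your cases $(6,4)$ and $(7,5)$ are already disposed of by Proposition \ref{P.blocksWithTwoNonCutvertex}, so the entire $k=5$ discussion is unnecessary; this is harmless but shows the enumeration was not done carefully.

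Second, and more seriously, the actual content of the theorem --- the elimination of $(n,k)=(7,4)$ and $(8,4)$ --- is not carried out; you state that a ``finite case analysis'' and ``degree/component bookkeeping'' will become contradictory, which is a plan rather than a proof. For $n=7$ the paper's argument is short but hinges on the refined, $r$-dependent form of Proposition \ref{P.BoundNeighbourhood}: since $H$ is not a block (Proposition \ref{P.FourWhiskers}) it has a leaf $v_1$, so $r=|N_H[v_1]|=2$ and $|N_B(v_1)| \le \lfloor 9/2 \rfloor - 2 = 2$, contradicting Proposition \ref{P.blocksWithDegree2verticesAreGood}. The weaker bound $|N_B(v_i)| \le \lfloor (n+4)/2 \rfloor - 2 = 3$ that you quote does not close this case. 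For $n=8$ the paper needs a genuinely delicate structural argument: starting from $N_B(v_1)=\{v_2,w_1,w_2\}$ it pins down the unique possible cut set $S=\{v_2,w_1,w_2,w_3\}$ of $\B\setminus\{v_1\}$ and the components of $\B\setminus S$, deduces that $H$ must be a star centred at $v_2$, determines $N_B(v_2)$, $N_B(v_3)$, $N_B(v_4)$ completely, and only then reaches a contradiction with unmixedness. None of the tools you list (the edge count \eqref{eq.boundsNumEdgesBlock}, Lemma \ref{L.cvdeg1}, Lemma \ref{L.strictly}) suffices on its own here, and ``examine the handful of candidates by hand'' is exactly the step that must be exhibited for the proof to stand.
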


\begin{proof}
By Remark \ref{R.ReductionFewVertices} it is enough to prove that in every block with whiskers $\B$ with at most $12$ vertices there is a cut vertex $v$ such that $J_{\B \setminus \{v\}}$ is unmixed, i.e., $N_B(v)$ is not contained in any cut set of $\B \setminus \{v\}$. Assume by contradiction that this is not the case.
By Proposition \ref{P.three_whiskers}, $\B$ has at least $4$ whiskers. Moreover, Proposition \ref{P.blocksWithTwoNonCutvertex} implies that $\B$ has exactly $4$ whiskers and at least $11$ vertices.

By Proposition \ref{P.FourWhiskers} we know that the graph $H$ induced on the cut vertices $v_1, v_2, v_3, v_4$ is not a block. Then $H$ has a leaf, say $v_1$, and we may assume $N_H(v_1)=\{v_2\}$. Note that $|N_B(v_1)| >2$ by Proposition \ref{P.blocksWithDegree2verticesAreGood}; moreover, if $\B$ has 11 vertices, Proposition \ref{P.BoundNeighbourhood} ensures that $|N_B(v_1)| \leq 2$, giving a contradiction.

Hence, we assume that $\B$ has $12$ vertices. Its vertices are the cut vertices $v_1, v_2,v_3,v_4$, their leaves $f_1, f_2, f_3,f_4$ and four more vertices $w_1, w_2, w_3, w_4$, which are not cut vertices of $\B$. Proposition \ref{P.blocksWithDegree2verticesAreGood} ensures that $|N_B(v_i)| \geq 3$ and $|N_B(w_i)| \geq 3$, for every $1 \leq i \leq 4$. By Proposition \ref{P.BoundNeighbourhood} we have that $|N_B(v_1)|=3$, say $N_B(v_1)=\{v_2, w_1, w_2\}$. By our assumption and Proposition \ref{P.good_cut_vertex}, there exists $S \in \mathcal{C}(\B \setminus \{v_1\})$ containing $N_B(v_1)$. By \cite[Proposition 4.18]{BMS22}, it follows that $S\setminus \{w_i\}$ is a cut set of $\B$ for some $i$, and Lemma \ref{L.strictly} implies that $w_i \notin N_B(v_1)$. We may assume $i=3$ and then $\{v_2, w_1, w_2, w_3\} \subseteq S$. Hence, $c_{\B}(S)\geq 5$ and it is straightforward to see that this is possible only if $S=\{v_2, w_1, w_2, w_3\}$ and the vertices of the connected components of $\B \setminus S$ are $\{f_2\}$, $\{v_1, f_1\}$, $\{v_3, f_3\}$, $\{v_4, f_4\}$, and $\{w_4\}$. In particular, $v_3$ and $v_4$ are not adjacent.

Since $H$ is not a block, it can only be a path, a triangle with a whisker or a star, but in the first two cases $v_3$ and $v_4$ would be adjacent, hence $H$ is a star with center in $v_2$ and leaves $v_1,v_3,v_4$. Moreover, $w_4$ is not adjacent to $v_1$, $v_3$ and $v_4$, thus it is adjacent to $v_2$ by \cite[Theorem 4.12]{BMS22} and to at least one between $w_1$ and $w_2$. By Lemma \ref{L.NeighboursOnlyCutVertices} and Proposition \ref{P.BoundNeighbourhood}, it follows that $|N_B(v_2)|=4$ and $N_B(v_2)=\{v_1,v_3,v_4,w_4\}$. By Lemma \ref{L.cvdeg1}, $\{v_3,v_4\} \subseteq N_B(w_3)$, $w_1 \in N_B(v_3) \cup N_B(v_4)$ and $w_2 \in N_B(v_3) \cup N_B(v_4)$. This fact and Proposition \ref{P.BoundNeighbourhood} imply that $|N_B(v_3)|=|N_B(v_4)|=3$, then we may assume $N_B(v_3)=\{v_2,w_1,w_3\}$ and $N_B(v_4)=\{v_2,w_2,w_3\}$. Notice that $N_B(v_3)$ and $N_B(v_4)$ are cut sets of $\B$ by \cite[Remark 5.4]{BMS22}. If $w_4$ is adjacent to $w_2$, in $\B \setminus N_B(v_3)$ there are three connected components, $\{v_1,v_4,f_1,f_4,w_2,w_4\}$, $\{f_2\}$ and $\{v_3,f_3\}$; analogously, if $w_4$ is adjacent to $w_1$, in $\B \setminus N_B(v_4)$ the three connected components are $\{v_1,v_3,f_1,f_3,w_1,w_4\}$, $\{f_2\}$ and $\{v_4,f_4\}$. In both cases, we contradict the unmixedness of $J_{\B}$.
\end{proof}

\section{The algorithm}\label{S.algorithm}

In this section, in order to answer Question \ref{Q.Main_question}, we describe an algorithm that employs Theorem \ref{T.filters} to restrict the search space to certain blocks with whiskers. The key idea of the algorithm is to generate all blocks with $n$ vertices and then add $k$ whiskers to suitable subsets of the vertices of each block. In this way we extend the computations in \cite{LMRR} because we can examine graphs with $n+k$ vertices generating only blocks with $n$ vertices, limiting the rapid growth in the number of graphs.
This allows us to verify Conjecture \ref{conjecture} for many new graphs.

We start with a lemma that describes basic properties of a block with whiskers.

\begin{lemma}\label{L.unmixedBlocksWithWhiskers}
If $J_{\B}$ is unmixed, then the following conditions are satisfied:
\begin{itemize}
  \item[(a)] for every $S \subseteq \{v_1,\dots,v_k\}$, $B \setminus S$ is connected;
  \item[(b)] for every $S \in \CC(B)$ containing $k_S \geq 0$ cut vertices of $\B$, $c_B(S) = |S| - k_S + 1$.
\end{itemize}
Moreover, if $k < n$, then
\[
\{S : S \subseteq \{v_1,\dots,v_k\} \} \cup \{S : S \in \CC(B) \} \subseteq \CC(\B).
\]
\end{lemma}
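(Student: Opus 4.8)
The plan is to prove the two itemized statements first, since they capture the structure of $\B$, and then deduce the final inclusion from them. For part (a): if $S \subseteq \{v_1,\dots,v_k\}$ and $B \setminus S$ were disconnected, then $\B \setminus S$ would have at least the components of $B \setminus S$ plus one whisker-leaf per vertex of $S$, giving $c_{\B}(S) \geq c_{B\setminus S}(S) + |S| \geq |S| + 2$; but $S$ is a cut set of $\B$ (each $v_i \in S$ is adjacent to $v$ and to a leaf, so removing any $v_i$ from $S$ strictly decreases the number of components), contradicting $c_{\B}(S) = |S| + 1$ from unmixedness via Lemma \ref{L.CutsetsInUnmixedGraphs}. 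Actually, even more directly: by Lemma \ref{L.CutsetsInUnmixedGraphs}, $c_{\B}(S) = |S|+1$ for any cut set, and $S$ being a set of cut vertices each attached to a distinct leaf forces exactly $|S|$ of those $|S|+1$ components to be the isolated leaves $f_i$, leaving exactly one component to contain $B \setminus S$; hence $B \setminus S$ is connected.

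For part (b): let $S \in \CC(B)$ contain $k_S$ cut vertices of $\B$, say $v_{i_1},\dots,v_{i_{k_S}}$. In $\B \setminus S$, the vertices $B \setminus S$ split into $c_B(S)$ components, and to these we add the $k$ whisker-leaves: the $k_S$ leaves $f_{i_j}$ attached to vertices of $S$ become isolated (contributing $k_S$ extra components), while each remaining leaf $f_j$ with $v_j \notin S$ lies in the same component as $v_j$. Thus $c_{\B}(S) = c_B(S) + k_S$. To finish I must show $S$ is a cut set of $\B$: removing any $s \in S$ from $S$ increases $c_B$ (as $S \in \CC(B)$), and — crucially — this does not get cancelled by the leaf bookkeeping, so $c_{\B}(S \setminus \{s\}) > c_{\B}(S)$ needs a short check (if $s$ is a cut vertex of $\B$, removing it merges its leaf $f_s$ back, which decreases the leaf count by one, but $c_B(S \setminus \{s\}) \geq c_B(S)+1$ compensates; if $s$ is not a cut vertex of $\B$, no leaf is involved and the inequality is clear). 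Once $S \in \CC(\B)$, Lemma \ref{L.CutsetsInUnmixedGraphs} gives $c_{\B}(S) = |S| + 1$, and combining with $c_{\B}(S) = c_B(S) + k_S$ yields $c_B(S) = |S| - k_S + 1$.

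For the final inclusion, assume $k < n$ (so $B \setminus \{v_1,\dots,v_k\} \neq \emptyset$, which is what makes the leaf-attached vertices genuine cut vertices and is used implicitly above). Every $S \subseteq \{v_1,\dots,v_k\}$ is a cut set of $\B$ by the argument in part (a) (each element of $S$ is adjacent to a private leaf and, since $B \setminus \{v_1,\dots,v_k\}$ is nonempty, to that component, so it genuinely disconnects), hence $S \in \CC(\B)$. And every $S \in \CC(B)$ is a cut set of $\B$ by the argument just given in part (b). The main obstacle I anticipate is the careful bookkeeping in verifying that elements of $S$ remain "essential" cut-vertices of $\B$ after the whiskers are attached — i.e., that the leaf-count contributions never cancel a drop in $c_B$ — and making sure the hypothesis $k < n$ is invoked exactly where it is needed (to guarantee $B \setminus S$ is nonempty when $S$ consists of whiskered vertices, so that the $v_i$'s are actual cut vertices of $\B$ rather than leaves of a smaller block).
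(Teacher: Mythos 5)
Your strategy is the same as the paper's: both proofs rest on the component counts $c_{\B}(S)\ge |S|+1$ for $S\subseteq\{v_1,\dots,v_k\}$ (when $k<n$) and $c_{\B}(S)=c_B(S)+k_S$ for $S\in\CC(B)$, combined with Lemma \ref{L.CutsetsInUnmixedGraphs}, which upgrades $c_{\B}(S)\ge|S|+1$ directly to ``$S\in\CC(\B)$ and $c_{\B}(S)=|S|+1$''. The one substantive flaw is that in part (b) you have the cut-set inequality consistently inverted: with the paper's definition, $S\in\CC(B)$ means $c_B(S)>c_B(S\setminus\{s\})$ for every $s\in S$, so deleting $s$ from $S$ (i.e.\ putting $s$ back into the graph) \emph{decreases} the number of components, and what you must verify is $c_{\B}(S)>c_{\B}(S\setminus\{s\})$, not $c_{\B}(S\setminus\{s\})>c_{\B}(S)$; in particular your claimed estimate $c_B(S\setminus\{s\})\ge c_B(S)+1$ is false as stated. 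Once the directions are fixed the check is immediate and your ``cancellation'' worry evaporates: if $s$ carries a whisker, adding $s$ back also absorbs the isolated leaf $f_s$, so the total count drops by at least $2$; if not, it drops by at least $1$. With that repair your argument is essentially the paper's; the only other difference is that the paper disposes of the case $k=n$ separately (showing $B$ must then be complete), which your part (b) computation recovers implicitly, since for $k_S=|S|$ it forces $c_B(S)=1$ and hence $\CC(B)=\{\emptyset\}$.
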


\begin{proof}
Suppose first $k=n$. If $B$ is complete, there is nothing to prove. Otherwise, there exists a non-empty $T \in \CC(B)$, which is clearly also a cut set of $\B$. Then $c_{\B}(T)=|T|+c_B(T)>|T|+1$, which contradicts the unmixedness of $J_{\B}$.

Thus, we may assume that $k <n$, i.e., in $B \setminus \{v_1,\dots,v_k\}$ is not empty. If $S \subseteq \{v_1,\dots,v_k\}$ with $1 \leq |S| \leq k$, then $c_{\B}(S) \geq |S| + 1$. Hence, Lemma \ref{L.CutsetsInUnmixedGraphs} implies $S \in \CC(\B)$ and $c_{\B}(S) = |S| + 1$, which means that $B\setminus S$ is connected. Now let $S \in \CC(B)$ containing $k_S \geq 0$ cut vertices of $\B$. Then $c_B(S) = c_{\B}(S) - k_S = |S| + 1 - k_S$, and $S \in \CC(\B)$ by Lemma \ref{L.CutsetsInUnmixedGraphs}.
\end{proof}

Next we describe our algorithm.

\begin{myalgorithm}\label{algorithm}
\ \\[1mm]
\begin{algorithm}[H]
\SetAlgoNoLine
\SetKwInOut{Input}{Input}\SetKwInOut{Output}{Output}
\Input{$n:=$ number of vertices of the blocks. \\
$k:=$ number of whiskers to attach to each block, with $4 \leq k \leq n - 3$.}
\Output{\textup{True} if every accessible block $\B$ with $n$ vertices and $k$ whiskers has a cut vertex $v$ such that $J_{\B \setminus \{v\}}$ is unmixed.
\textup{False} otherwise.}
\BlankLine
$\overbar{\mc B} := \emptyset$\\
\textit{$\mc B :=$ list of non-isomorphic blocks with $n$ vertices having neither free vertices nor vertices of degree $2$}\\
\For{$B \in \mc B$}{
    \textit{$\widetilde{\CC}(B) := \{(T, k_T): T \in \CC(B), k_T:=|T|+1-c_B(T) \}$}\\
    \If{$1 \leq k_T \leq k$ for all $(T,k_T) \in \widetilde{\CC}(B)$}{
    $V := \{v\in V(B) \ :\ |N_B(v)| \leq \lfloor \frac{n+k}{2} \rfloor -2\}$\\
    \For{$S$ subset of $V$ with cardinality $k$}{
    	\If{$|N_B(v)| \leq \lfloor \frac{n+r}{2} \rfloor - 2$ for every cut vertex $v$ of $\B$ where $r := |N_B[v] \cap S|$}{
    	$N_B(S):=\{u \in V(B)\,:\,\{u, z\} \in E(B) \text{ for some } z \in S\}$ \\
        	\If{$|N_B(S)| = n$ \rm{\textbf{and}} $B\setminus S$ is connected \rm{\textbf{and}} $(|S \cap T| = k_T$ for all $(T, k_T) \in \widetilde{\CC}(B))$}{
            \textit{$H := B[S]$ induced subgraph by $B$ on $S$}\\
           	 	\If{{\rm (}$k=4$ \rm{\textbf{and}} $H$ {\it is not a block}) \rm{\textbf{or}} ($k \neq 4$ \rm{\textbf{and}} $H$ {\it is not complete})}{
                $\B := B \cup \{\{v_i, f_i\} : v_i \in S, f_i \text{ new vertex}, i=1,\dots,k\}$\\
                	\If{$\B$ is not isomorphic to any  element in $\overbar{\mc B}$}{
                    $\overbar{\mc B} := \overbar{\mc B} \cup \{\B\}$				
                    }
                }
            }
        }
    }
}}
\For{$\B \in \overbar{\mc B}$}{
    \If{$\B$ \textit{is accessible}}{
        \If{\textit{$J_{\B \setminus \{v\}}$ is not unmixed for all $v$ cut vertex of $\B$}}{
            \Return{False}
        }
    }
}
\Return{True}
\end{algorithm}
\end{myalgorithm}

\begin{proof}
The aim of the above algorithm is to check whether for every accessible block with $n$ vertices and $k$ whiskers $\B$, there exists a cut vertex $v$ of $\B$ such that $J_{\B \setminus \{v\}}$ is unmixed. We describe the algorithm in more detail.
\begin{description}
\item[Input] We give two integers $n,k$ as input, where $n$ is the number of vertices of the blocks and $k$ the number of whiskers we add to each block, for some $4 \leq k \leq n - 3$. In fact, if $k \leq 3$ or $k \geq n - 2$, then by Propositions \ref{P.three_whiskers} and \ref{P.blocksWithTwoNonCutvertex}, respectively, there exists a cut vertex of the block with whiskers $\B$ such that $J_{\B \setminus \{v\}}$ is unmixed.
\item[Output] \textit{True} if every accessible block $\B$ with $n$ vertices and $k$ whiskers has a cut vertex $v$ such that $J_{\B \setminus \{v\}}$ is unmixed. \textit{False} otherwise.
\smallskip
\item[Line 1] We initialize $\overbar{\mc B}$ to be the empty list.

\item[Line 2] We produce the list $\mc B$ of pairwise non-isomorphic blocks with $n$ vertices that have neither free vertices nor vertices of degree $2$. We can remove blocks with free vertices by Proposition \ref{P.blockWithFreeVertex} and blocks with vertices of degree $2$ by Proposition \ref{P.blocksWithDegree2verticesAreGood}. Moreover, by Remark \ref{R.boundsNumEdgesBlocks} it is enough to consider blocks with number of edges satisfying the bounds \eqref{eq.boundsNumEdgesBlock}.
We implemented this filter as a routine in \textit{Nauty} \cite{Nauty}. 

\item[Lines 3-4] For each block $B$ in $\mc B$, we compute the set $\widetilde{\CC}(B)$ of pairs $(T, k_T)$, where $k_T$ is given by Lemma \ref{L.unmixedBlocksWithWhiskers}.

\item[Line 5] We want to analyze all possible accessible blocks with whiskers constructed from the block $B$ by adding $k$ whiskers to it. Therefore, by Lemma \ref{L.unmixedBlocksWithWhiskers}, $k_T$ will be the number of cut vertices contained in $T$. Hence, we need to have $k_T \leq k$ for every $(T, k_T) \in \widetilde{\CC}(B)$. Moreover, $k_T \geq 1$ by \cite[Lemma 4.1]{BMS22}. 

\item[Line 6] We define the set $V$ of vertices $v$ such that $|N_B(v)| \leq \lfloor \frac{n+k}{2} \rfloor - 2$ in order to use Proposition \ref{P.BoundNeighbourhood}.

\item[Line 7] We execute a for-loop over all the subsets $S$ of $V$ with cardinality $k$: we will then add a whisker to each vertex of $S$.

\item[Line 8] In light of Proposition \ref{P.BoundNeighbourhood} we check whether $|N_B(v)| \leq \lfloor \frac{n+r}{2} \rfloor - 2$ for every $v$ in $S$, where $r := |N_B[v] \cap S|$.

\item[Lines 9-10] We compute $N_B(S)$ and check the following conditions:
    \begin{itemize}
 	\item[i)] $|N_B(S)|=n$, i.e., $N_B(S)=V(B)$, otherwise there is some vertex of $B$ that is not adjacent to any vertex in $S$. Hence, the block with whiskers $\B^S$ is not accessible by \cite[Theorem 4.12]{BMS22};
    \item[ii)] $B \setminus S$ has to be connected and $|S \cap T| = k_T$ for all $(T, k_T) \in \widetilde{\CC}(B)$ by Lemma \ref{L.unmixedBlocksWithWhiskers}.
    \end{itemize}

\item[Lines 11-12] We define the induced subgraph $H := B[S]$ by $B$ on $S$. By Proposition \ref{P.FourWhiskers} we can exclude the case $k=4$ with $H$ a block and by \cite[Proposition 6.6]{BMS22} we can exclude the case $k > 4$ with $H$ a complete graph. 

\item[Line 13] We define the graph $\B:=\B^S$ obtained from $B$ by adding a whisker to each vertex of the set $S$.

\item[Lines 14-15] Finally, we add $\B$ to $\overbar{\mc B}$ if $\B$ is not isomorphic to any  element of $\overbar{\mc B}$.

\item[Lines 16-20] For each block with whiskers $\B \in \overbar{\mc B}$, if $\B$ is accessible, we consider the cut vertices of $\B$. If $J_{\B \setminus \{v\}}$ is not unmixed for every $v$ cut vertex of $\B$, we return \emph{False}. If we reach the end of the for loop, it means that for each $\B \in \overbar{\mc B}$ we found a cut vertex $v$ of $\B$ such that $J_{\B \setminus \{v\}}$ is unmixed, and hence we return \emph{True}. \qedhere
\end{description}
\end{proof}

We implemented Algorithm \ref{algorithm} combining computations in \textit{Nauty} \cite{Nauty}, routines in \emph{C}, \emph{C++} and \emph{Python}, and incorporating the code developed in \cite{LMRR21, LMRR}. We also included some optional input arguments, which allow to select the number of edges of the blocks and subdivide each computation into smaller parts. The complete code for the implementation of Algorithm \ref{algorithm} can be found in \cite{BMRS23}.

\begin{remark} \label{R.computations}
For all blocks with whiskers $\B$ with $n=|V(B)| \leq 11$, Algorithm \ref{algorithm} returns \textit{True}, i.e., for each of those graphs it finds a cut vertex $v$ such that $J_{\B \setminus \{v\}}$ is unmixed. In the computations, we only consider $4 \leq k \leq n-3$ because of Propositions \ref{P.three_whiskers} and \ref{P.blocksWithTwoNonCutvertex}; in particular $n \geq 7$.

In Table \ref{tab.filtered_accessible_graphs} we collect the number of graphs produced throughout Algorithm \ref{algorithm}. In particular, the second column contains the number of blocks in $\mc B$ (see Line 2), followed by the number of blocks with whiskers $\B$ such that $J_{\B}$ is unmixed and finally the number of accessible blocks with whiskers. Notice that none of the conditions in Theorem \ref{T.filters} applies to the graphs in Table \ref{tab.filtered_accessible_graphs} because they are all tested by Algorithm \ref{algorithm}. Part of these computations, obtained with a simpler version of the algorithm, were announced in \cite{BMRS22}.

\begin{small}
\setlength{\doublerulesep}{0pt}
\newcommand{\thickhline}{\hline\hline}
\begin{longtable}{| >{\centering\arraybackslash}m{2.1cm} " >{\centering\arraybackslash}m{1.8cm} " >{\centering\arraybackslash}m{4mm} | >{\centering\arraybackslash}m{7mm} | >{\centering\arraybackslash}m{7mm} | >{\centering\arraybackslash}m{1cm} | >{\centering\arraybackslash}m{1cm} " >{\centering\arraybackslash}m{4mm} | >{\centering\arraybackslash}m{5mm} | >{\centering\arraybackslash}m{7mm} | >{\centering\arraybackslash}m{1cm} | >{\centering\arraybackslash}m{1cm} |}
  \hline
  Number of & \multicolumn{1}{c "}{Filtered} & \multicolumn{5}{c "}{Blocks with whiskers} & \multicolumn{5}{c |}{Accessible blocks } \\
  vertices of $B$ $\downarrow$ & \multicolumn{1}{c "}{blocks} & \multicolumn{5}{c "}{with $J_{\B}$ unmixed} & \multicolumn{5}{c |}{with whiskers} \\\hline
  (Number of whiskers) $\rightarrow$ &  & 4 & 5 & 6 & 7 & 8 & 4 & 5 & 6 & 7 & 8 \\\thickhline
 7 & 79 & 0 & - & - & - & - & 0 & - & - & - & - \\\hline
 8 & 1,716 & 0 & 0 & - & - & - & 0 & 0 & - & - & - \\\hline
 9 & 61,408 & 0 & 2 & 2 & - & - & 0 & 2 & 1 & - & - \\ \hline
10 & 4,054,291 & 0 & 6 & 9 & 25 & - & 0 & 2 & 5 & 24 & - \\\hline
11 & 475,625,326 & 0 & 54 & 541 & 367 & 53 & 0 & 13 & 309 & 283 & 38 \\\hline
\caption{Filtered accessible blocks with whiskers} \label{tab.filtered_accessible_graphs}
\end{longtable}
\end{small}

We also remark that the graphs in Table \ref{tab.filtered_accessible_graphs} are a small subset of all accessible graphs. For instance, in \cite[Table 1]{LMRR21} the authors found $24,270$ connected accessible graphs with up to $12$ vertices, whereas we found only $5$ accessible graphs with up to $15$ vertices.

Notice that the blocks with whiskers with $J_{\B}$ unmixed in Table \ref{tab.filtered_accessible_graphs} are exactly the graphs in $\overbar{\mathcal B}$ (see Line 15). It is important that the number of graphs in $\overbar{\mathcal B}$ is small because testing unmixedness is the bottleneck of our algorithm.
\end{remark}

\begin{example}
The graph $\B$ in Figure \ref{F.graph_all_filters} is one of the smallest accessible blocks with whiskers from Table \ref{tab.filtered_accessible_graphs}. This also means that the graph $\B$ does not satisfy any of the conditions in Theorem \ref{T.filters}. However, in this case $J_{\B \setminus \{1\}}$ is unmixed.

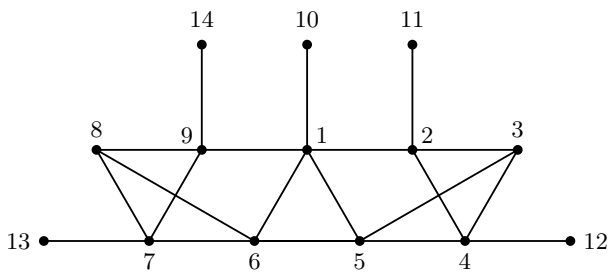
\begin{figure}[ht!]
\begin{tikzpicture}[scale=0.7]
\node[label={above right:{\small $1$}}] (a) at (1,1.7320508075688776) {};
\node[label={above right:{\small $2$}}] (b) at (3,1.7320508075688779) {};
\node[label={above:{\small $3$}}] (c) at (5,1.7320508075688779) {};
\node[label={below:{\small $4$}}] (d) at (4,0) {};
\node[label={below:{\small $5$}}] (e) at (2,0) {};
\node[label={below:{\small $6$}}] (f) at (0,0) {};
\node[label={below:{\small $7$}}] (g) at (-2,0) {};
\node[label={above:{\small $8$}}] (h) at (-3,1.7320508075688779) {};
\node[label={above left:{\small $9$}}] (i) at (-1,1.7320508075688779) {};
\node[label={above:{\small $10$}}] (j) at (1,3.7320508075688785) {};
\node[label={above:{\small $11$}}] (k) at (3,3.732050807568878) {};
\node[label={right:{\small $12$}}] (l) at (6,0) {};
\node[label={left:{\small $13$}}] (m) at (-4,0) {};
\node[label={above:{\small $14$}}] (n) at (-1,3.732050807568878) {};
\draw (0,0)-- (2,0);
\draw (2,0)-- (1,1.7320508075688776);
\draw (1,1.7320508075688776)-- (0,0);
\draw (-4,0)-- (6,0);
\draw (-3,1.7320508075688779)-- (5,1.7320508075688779);
\draw (3,3.732050807568878)-- (3,1.7320508075688779);
\draw (1,3.7320508075688785)-- (1,1.7320508075688776);
\draw (-1,3.732050807568878)-- (-1,1.7320508075688779);
\draw (-3,1.7320508075688779)-- (-2,0);
\draw (-1,1.7320508075688779)-- (-2,0);
\draw (-3,1.7320508075688779)-- (0,0);
\draw (5,1.7320508075688779)-- (4,0);
\draw (2,0)-- (5,1.7320508075688779);
\draw (3,1.7320508075688779)-- (4,0);
\end{tikzpicture}
\caption{An accessible block with whiskers from Table \ref{tab.filtered_accessible_graphs}} \label{F.graph_all_filters}
\end{figure}
\end{example}

\begin{example}
Looking at Table \ref{tab.filtered_accessible_graphs}, one can see that Algorithm \ref{algorithm} did not find any block with $4$ whiskers and $n \leq 11$ in $\overbar{\mathcal B}$ (see Line 15). However, it found non-accessible blocks with $12$ vertices and $4$ whiskers whose binomial edge ideal is unmixed, for instance the graph in Figure \ref{F.unmixed_four_whiskers}. We do not know if there are accessible blocks with $4$ whiskers in $\overbar{\mathcal B}$ for $n \geq 12$.

\begin{figure}[ht!]
\begin{tikzpicture}[scale=0.75]
\node[label={above left:{\small $1$}}] (a) at (-2,2) {};
\node[label={above left:{\small $2$}}] (a) at (0,2) {};
\node[label={above right:{\small $3$}}] (a) at (2,2) {};
\node[label={above right:{\small $4$}}] (a) at (4,2) {};
\node[label={right:{\small $5$}}] (a) at (5.7210783496401945,0.98123147162872) {};
\node[label={right:{\small $6$}}] (a) at (5.420710894868759,-0.604201345485154) {};
\node[label={below:{\small $7$}}] (a) at (3.3960403380615816,0.6005510943625672) {};
\node[label={below:{\small $8$}}] (a) at (3.7187382017306003,-1.02271158882251) {};
\node[label={below:{\small $9$}}] (a) at (2,0) {};
\node[label={below:{\small $10$}}] (a) at (0,0) {};
\node[label={below:{\small $11$}}] (a) at (-1.7214994599833984,-1.0180567809689534) {};
\node[label={left:{\small $12$}}] (a) at (-3.7153255330849273,0.9715748371675685) {};

\node[label={above:{\small $13$}}] (a) at (-2,4) {};
\node[label={above:{\small $14$}}] (a) at (0,4) {};
\node[label={above:{\small $15$}}] (a) at (2,4) {};
\node[label={above:{\small $16$}}] (a) at (4,4) {};

\draw (0,0)-- (2,0);
\draw (2,0)-- (2,2);
\draw (2,2)-- (0,2);
\draw (0,2)-- (0,0);
\draw (0,4)-- (0,2);
\draw (2,4)-- (2,2);
\draw (-2,4)-- (-2,2);
\draw (-2,2)-- (0,2);
\draw (2,2)-- (4,2);
\draw (4,4)-- (4,2);
\draw (-2,2)-- (-3.7153255330849273,0.9715748371675685);
\draw (0,0)-- (-1.7214994599833984,-1.0180567809689534);
\draw (-3.7153255330849273,0.9715748371675685)-- (-1.7214994599833984,-1.0180567809689534);
\draw (-2,2)-- (0,0);
\draw (-1.7214994599833984,-1.0180567809689534)-- (0,2);
\draw (-3.7153255330849273,0.9715748371675685)-- (0,2);
\draw (4,2)-- (5.7210783496401945,0.98123147162872);
\draw (2,0)-- (3.7187382017306003,-1.02271158882251);
\draw (3.3960403380615816,0.6005510943625672)-- (4,2);
\draw (3.3960403380615816,0.6005510943625672)-- (2,2);
\draw (3.3960403380615816,0.6005510943625672)-- (5.7210783496401945,0.98123147162872);
\draw (2,2)-- (3.7187382017306003,-1.02271158882251);
\draw (3.7187382017306003,-1.02271158882251)-- (5.7210783496401945,0.98123147162872);
\draw (4,2)-- (3.7187382017306003,-1.02271158882251);
\draw (3.7187382017306003,-1.02271158882251)-- (5.420710894868759,-0.604201345485154);
\draw (5.420710894868759,-0.604201345485154)-- (4,2);
\end{tikzpicture}
\caption{A block with four whiskers $\B$ with $J_{\B}$ unmixed} \label{F.unmixed_four_whiskers}
\end{figure}
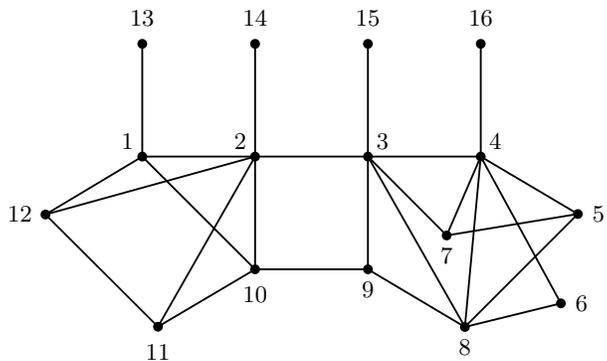

\end{example}

In \cite{LMRR} the authors verified that Conjecture \ref{conjecture} holds for all graphs with up to $12$ vertices. Algorithm \ref{algorithm} allows us to go way beyond, checking the conjecture for all graphs with up to $15$ vertices and for blocks with up to $11$ vertices with whiskers (reaching some graphs with $11+8=19$ vertices), as we prove next.

\begin{theorem} \label{T.computational_results}
Let $G$ be one of the following:
\begin{itemize}
\item[(a)] a graph with $|V(G)| \leq 15$;
\item[(b)] $G = \B$ a block with whiskers, where $n=|V(B)| \leq 11$.
\end{itemize}
Then the conditions below are equivalent:
\begin{enumerate}
\item $J_G$ is strongly unmixed;
\item $R/J_G$ is Cohen-Macaulay;
\item $R/J_G$ satisfies Serre's condition $(S_2)$;
\item $G$ is accessible.		
\end{enumerate}
\end{theorem}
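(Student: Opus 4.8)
The implications $(1)\Rightarrow(2)\Rightarrow(3)\Rightarrow(4)$ are exactly \eqref{Conditions}, valid for every graph, so the whole point is the converse $(4)\Rightarrow(1)$ (and field independence then comes for free, since strong unmixedness is combinatorial). The plan is to first isolate a \emph{Key Lemma}: every accessible block with whiskers $\B$ with $|V(B)|\le 11$ has a cut vertex $v$ with $J_{\B\setminus\{v\}}$ unmixed. Its proof is a case split on the number $k$ of whiskers, with $n=|V(B)|$: the ranges $k\le 3$ and $k\ge n-2$ are precisely Propositions \ref{P.three_whiskers} and \ref{P.blocksWithTwoNonCutvertex}. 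For $4\le k\le n-3$, if $\B$ satisfies a hypothesis of Theorem \ref{T.filters}, or the subgraph $H$ induced on the cut vertices is complete (use \cite[Proposition 6.6]{BMS22}), we are done; otherwise $B$ has no free vertex and no vertex of degree at most two, the bound of Proposition \ref{P.BoundNeighbourhood} holds at every cut vertex, and $H$ is neither complete nor (when $k=4$) a block, which is exactly the situation in which $\B$ is placed in the list $\overbar{\mathcal B}$ by Algorithm \ref{algorithm} run with parameters $(n,k)$; since the algorithm returns \textup{True} for all $n\le 11$ (Remark \ref{R.computations}), the desired cut vertex exists.

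Case (a) then follows at once from Remark \ref{R.ReductionFewVertices}: it suffices to produce a good cut vertex in every accessible block with whiskers $\B$ with $|V(\B)|\le 15$, and if $4\le k\le n-3$ then $n\le n+k-4\le 11$ so the Key Lemma applies, while $k\le 3$ and $k\ge n-2$ are covered by Propositions \ref{P.three_whiskers} and \ref{P.blocksWithTwoNonCutvertex}. Hence $J_G$ is strongly unmixed for every accessible $G$ with at most $15$ vertices.

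For case (b) the reduction of Remark \ref{R.ReductionFewVertices} is not enough, since $\B$ may have as many as $11+8=19$ vertices and that reduction would demand the Key Lemma for blocks on more than $11$ vertices. Instead I would prove directly, by induction on the pair $(|V(B)|,k)$ ordered lexicographically, that every accessible block with whiskers $\B$ with $|V(B)|\le 11$ has $J_{\B}$ strongly unmixed. Given such a $\B$ with $k\ge 1$, the Key Lemma gives a cut vertex $v=v_i$, and \cite[Corollary 5.16]{BMS22} makes $\B\setminus\{v_i\}$ and $\B_{v_i}\setminus\{v_i\}$ accessible. The two structural facts that close the induction are: (i) because $B$ is $2$-connected, turning $N_B(v_i)\cup\{f_i\}$ into a clique makes $(V(B)\setminus\{v_i\})\cup\{f_i\}$ a single block, so $\B_{v_i}\setminus\{v_i\}$ is again a block with whiskers whose block has exactly $n\le 11$ vertices but only $k-1$ whiskers, hence is lexicographically smaller; and (ii) $\B\setminus\{v_i\}$ is the union of an isolated vertex and a graph $A$ all of whose blocks have at most $n-1\le 10$ vertices, and $A$ is accessible, so by \cite[Proposition 3]{LMRR21} each block-with-whiskers of $A$ is accessible with a block strictly smaller than $B$. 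Applying the inductive hypothesis in both cases, and \cite[Theorem 3.17]{SS22} to reassemble $J_{\B\setminus\{v_i\}}$ from the blocks-with-whiskers of $A$, one gets that $J_{\B\setminus\{v_i\}}$ and $J_{\B_{v_i}\setminus\{v_i\}}$ are strongly unmixed, whence $J_{\B}$ is strongly unmixed by Proposition \ref{P.stronglyUnmixed}. The base of the induction reduces to $B$ complete, handled by \cite[Lemma 12]{LMRR} (this also covers $k=0$, since an accessible $2$-connected graph is complete).

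I expect the main obstacles to be, first, carrying out the block-structure bookkeeping in case (b) precisely --- in particular checking that $\B\mapsto\B_{v_i}\setminus\{v_i\}$ never enlarges the block beyond $n$ vertices, which relies crucially on $2$-connectedness of $B$ and fails for the analogous operation $G\mapsto G_v\setminus\{v\}$ on general graphs --- and, second, the size of the verification behind the Key Lemma for $4\le k\le n-3$: one must check that the filters built into Algorithm \ref{algorithm} really coincide with the hypotheses of Theorem \ref{T.filters} and the surrounding propositions, so that every non-filtered accessible block with whiskers is actually generated and tested; the certification of this is the finite but massive computation summarised in Table \ref{tab.filtered_accessible_graphs}.
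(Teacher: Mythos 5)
Your proposal is correct and follows essentially the same route as the paper: the implications $(1)\Rightarrow(2)\Rightarrow(3)\Rightarrow(4)$ from \eqref{Conditions}, the computational verification of Remark \ref{R.computations} (fed by the filters of Theorem \ref{T.filters} and Propositions \ref{P.three_whiskers} and \ref{P.blocksWithTwoNonCutvertex}) as the key input, Remark \ref{R.ReductionFewVertices} for case (a), and for case (b) the same reduction via Proposition \ref{P.stronglyUnmixed}, \cite[Corollary 5.16]{BMS22}, \cite[Proposition 3]{LMRR21} and \cite[Theorem 3.17]{SS22}, using that $\B_{v_i}\setminus\{v_i\}$ is again a block with $n$ vertices and $k-1$ whiskers. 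The only cosmetic difference is that you organize the induction in (b) lexicographically on $(|V(B)|,k)$, whereas the paper inducts on $|V(B)|$ and unrolls the decreasing-$k$ iteration explicitly, reaching the complete-graph base case via \cite[Lemma 4.9]{BMS22}.
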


\begin{proof}
We know that $(1) \Rightarrow (2)\Rightarrow (3)\Rightarrow (4)$ for every graph $G$, hence we only need to show that $(4)\Rightarrow (1)$.

\textit{(a)} Let $G$ be a graph with $|V(G)| \leq 15$. By Remark \ref{R.ReductionFewVertices}, it is enough to show that every block with whiskers $\B$ with at most $15$ vertices has a cut vertex $v$ such that $J_{\B \setminus \{v\}}$ is unmixed. If $\B$ has at most $3$ whiskers, then the claim follows by Proposition \ref{P.three_whiskers}. Otherwise, $B$ is a block with at most $11$ vertices and we conclude by Remark \ref{R.computations}.

\textit{(b)} Let $G = \B$ be a block with whiskers, where $n = |V(B)| \leq 11$. We proceed by induction on $|V(B)| \geq 1$. The case $|V(B)|=1$ is trivial. By Remark \ref{R.computations}, there exists a cut vertex, say $v_1$, such that $J_{\B \setminus \{v_1\}}$ is unmixed. Moreover, in light of Proposition \ref{P.stronglyUnmixed}, it is enough to show that $J_{\B \setminus \{v_1\}}$ and $J_{\B_{v_1} \setminus \{v_1\}}$ are strongly unmixed. Notice that the graphs $\B \setminus \{v_1\}$ and $\B_{v_1} \setminus \{v_1\}$ are accessible by \cite[Corollary 5.16]{BMS22}. Furthermore, for each block $C$ of $\B \setminus \{v_1\}$, $J_{\overbar C}$ is strongly unmixed by induction since $|V(C)| < |V(B)|$, and hence $J_{\B \setminus \{v_1\}}$ is strongly unmixed by \cite[Theorem 3.17]{SS22}.

Let now $G_1=\B_{v_1} \setminus \{v_1\}$, which is an accessible block $C$ with $k-1$ whiskers. We only need to show that $J_{G_1}$ is strongly unmixed. Notice that $V(C)=(V(B) \cup \{f_1\}) \setminus \{v_1\}$, where $f_1$ is the leaf adjacent to $v_1$ in $\B$. Therefore, $|C|=|B| \leq 11$ and by assumption there exists a cut vertex, say $v_2$ of $G_1$ such that $J_{G_1 \setminus \{v_2\}}$ is unmixed. As above, $J_{G_1 \setminus \{v_2\}}$ is strongly unmixed and $G_2=(G_1)_{v_2} \setminus \{v_2\}$ is an accessible block with $k-2$ whiskers. Hence, it is enough to prove that $J_{G_2}$ is strongly unmixed. Repeating the same argument we obtain the graph $G_k=(G_{k-1})_{v_k} \setminus \{v_k\}$ and we only need to show that $J_{G_k}$ is strongly unmixed. However, $G_{k-1}$ is an accessible block with one whisker, and then its unique cut vertex is adjacent to all the other vertices of $G_{k-1}$ by \cite[Lemma 4.9]{BMS22}. Therefore, $G_k$ is a complete graph and $J_{G_k}$ is strongly unmixed.
\end{proof}

{\bf Acknowledgements}. Part of this paper was written while the first and the third authors visited the Discrete Mathematics and Topological Combinatorics research group at Freie Universit\"{a}t Berlin. They want to express their thanks for the hospitality.

\end{document}